\documentclass{amsart}

\title[Closed Lie ideals and center
 of generalized group
algebras]{On closed Lie ideals and center of generalized group algebras}

\author[V. P. Gupta, R. Jain and B. Talwar]{Ved
  Prakash Gupta, Ranjana Jain and Bharat Talwar}

\address{Ved Prakash
  Gupta, School of Physical Sciences, Jawaharlal Nehru University, New
  Delhi} \email{vedgupta@mail.jnu.ac.in, ved.math@gmail.com}

\address{Ranjana Jain,
  Department of Mathematics, University of Delhi, Delhi}
\email{rjain@maths.du.ac.in}

\address{Bharat Talwar, Department of
  Mathematics, University of Delhi, Delhi}
\email{btalwar.math@gmail.com}

\thanks{The third named author was supported by a Junior Research Fellowship
	of CSIR with file number 09/045(1442)/2016-EMR-I}

\usepackage{relsize}
\usepackage{amsmath,amsfonts,amssymb,amsthm,mathrsfs}
\usepackage{hyperref}
\usepackage{cleveref}
\usepackage[margin=3.7cm]{geometry}

\usepackage{setspace}
\usepackage{xcolor}

\subjclass[2010]{22D15, 43A20, 46M05}

\allowdisplaybreaks[1]

\numberwithin{equation}{section}
\newtheorem{theorem}{\bf Theorem}[section]
\newtheorem{lemma}[theorem]{\bf Lemma}
\newtheorem{cor}[theorem]{\bf Corollary}
\newtheorem{remark}[theorem]{\bf Remark}
\newtheorem{prop}[theorem]{\bf Proposition}

\newtheorem{example}[theorem]{\bf Example}


\newcommand{\oop}{\widehat\otimes}

\newcommand{\omin}{\otimes^{\min}}

\newcommand{\obp}{\otimes^\gamma}
\newcommand{\oi}{\otimes^\lambda}

\newcommand{\ra}{\rightarrow}
\newcommand{\ot}{\otimes}

\newcommand{\Z}{\mathcal{Z}}

\newcommand{\C}{\mathbb{C}}
\newcommand{\N}{\mathbb{N}}

\newcommand{\R}{\mathbb{R}}

\newcommand{\ol}{\overline}
\newcommand{\mcal}{\mathcal}

\newcommand{\A}{L^1(G,A)}

\newcommand{\BA}{Banach algebra}

\newcommand{\norm}[1]{\| #1 \|}

\begin{document}

	\begin{abstract}
 For any locally compact group $G$ and any Banach algebra $A$, a
 characterization of the closed Lie ideals of the generalized group
 algebra $L^1(G,A)$ is obtained in terms of left and right actions by
 $G$ and $A$. In addition, when $A$ is unital and $G$ is an ${\bf
   [SIN]}$ group, we show that the center of $L^1(G,A)$ is precisely
 the collection of all center valued functions which are constant on
 the conjugacy classes of $G$. As an application, we establish that
 $\mathcal{Z}(L^1(G) \obp A)= \mathcal{Z}(L^1(G)) \obp \mcal{Z}(A)$,
 for a class of groups and Banach algebras. And, prior to these, for
 any finite group $G$, the Lie ideals of the group algebra $\C[G]$ are
 identified in terms of some canonical spaces determined by the
 irreducible characters of $G$.
        \end{abstract}
\keywords{Lie ideals, group algebras, generalized group algebras,
  projective tensor product, center}

\maketitle

\section{Introduction}

An associative algebra $A$ inherits a canonical Lie algebra structure
with respect to the Lie bracket given by the commutator $[x,y]:=xy - yx$. And, a
subspace $L$ of $A$ is said to be a {\it Lie ideal} if $ [L,A]
\subseteq L$, where
\[
  [L,A] := \text{span}\big\{[x,a]: x \in
  L, a \in A \big\}.
  \] 
The project of analysis of ideals of various tensor products
of operator algebras has attracted some well known operator
algebraists and a substantial amount of work has been done in this
direction - see, for instance, \cite{Guichardet, ass, arch2,  
  jk11, JK-edin, GJ2, TJ} and the references therein.

 Analysis of Lie ideals of associative rings was initiated as early as
 in 1955 by Herstein (\cite{herstein, herstein2}), for whom the
 motivation was purely algebraic, and was followed up enthusiastically
 by Herstein himself and many other algebraists. On the other hand,
 the study of closed Lie ideals of operator algebras is primarily
 motivated by the well understood relationship that exists between
 commutators, projections and closed Lie ideals in $C^*$-algebras -
 see \cite{Ped80, Marcoux2}.  Given its relevance, a good amount of
 work has also been done to examine the Lie ideals of pure as well as
 Banach algebras - see \cite{ Miers, FMS, Marcoux,  MM, paulsen, bks,
   Marcoux2, robert}.

 However, unlike the ideals of tensor products of operator algebras,
 not much was known about the closed Lie ideals of various tensor
 products of operator algebras. The pioneering works in this direction
 appear mainly in the works of Marcoux (\cite{Marcoux}) and Bre\v{s}ar
 et al.~(\cite{bks}). Marcoux, basically, identified all the closed
 Lie ideals of $A \omin C(X)$ for a UHF $C^*$-algebra $A$ and a
 compact space $X$. And, in 2008, relying heavily on the Lie ideal
 structure of tensor products of pure algebras, Bre\v{s}ar et
 al.~proved that for a unital Banach algebra (resp., $C^*$-algebra)
 $A$, the closed Lie ideals of the Banach space projective tensor
 product $A \obp K(H)$ and of the Banach space injective tensor
 product $A \oi K(H)$ (if it is a Banach algebra) (resp., of $A \omin
 K(H)$) are precisely the closed ideals.

 Motivated by these works of Marcoux and Bre\v{s}ar et al., we
 analyzed the structure of closed Lie ideals of certain ($C^*$ and
 other) tensor products of $C^*$-algebras in \cite{GJ, brv, TJ}, which
 also includes a generalization of Marcoux's result. This article is
 basically a continuation of {this project to the analysis of} closed
 Lie ideals of the projective tensor product of certain Banach
 algebras. More specifically, in this article, we mainly analyze the
 closed Lie ideals of the projective tensor product $L^1(G)\obp A$
 (which is also identified with the generalized group algebra $L^1(G,
 A)$), for any locally compact group $G$ and any Banach algebra $A$.
 The main result in this direction is the following
 characterization:\smallskip

  \noindent {\bf \Cref{LieidealsinL1G}.} {\em Let $G$ be a locally
    compact group and $A$ be a Banach algebra. Then, a closed
    subspace $L$ of $\A$ is a Lie ideal if and only if
	\[
	\Delta(x^{-1})(f\cdot x^{-1})a - a(x\cdot f ) \in L
	\]
	for every $f \in L$, $x \in G$ and $a \in A$, where $\Delta$
        denotes the modular function of group $G$.  }
\smallskip

 It would be appropriate to mention here that there is a subtle
 difference between the perspectives of some earlier works in this
 direction and of this article. In a nutshell, the focus of the
 articles \cite{bks, GJ, brv, Marcoux, TJ} was on characterizing and
 identifying closed Lie ideals of some Banach algebras and various
 tensor products of $C^*$-algebras by exploiting, among others, the
 notion of the {\em Lie normalizer} of an ideal, whereas the (above)
 characterization that we obtain in this article is essentially
 motivated by a technique that was considered by Laursen
 (\cite[Theorem 2.2]{laursen}) to characterize the closed (left and
 right) ideals of the generalized group algebra. Section 3
 is devoted to this characterization, few generalities and some
 applications.
\smallskip

 As the title suggests, the other important aspect of this article is
 the analysis of the center of a generalized group algebra and
 Section 4 is completely devoted to this theme. There already exist
 some satisfying results related to the center of a group algebra in
 literature - see \cite{mosak, mosak1, losert}. For instance, it is
 known that for a locally compact group $G$, the center
 $\mcal{Z}(L^1(G))$ is non-trivial if and only if $G$ is an ${\bf
   [IN]}$ group (\cite{mosak1}). Its analogue for generalized group
 algebras holds as is shown in \Cref{centre}. Also, for an ${\bf
   [IN]}$ group $G$, $f \in L^1(G)$ is central if and only if $f$ is
 constant on the conjugacy classes of $G$ (\cite{losert}). Taking cue
 from this, we prove the following:\smallskip

 \noindent {\bf \Cref{constantonconjugacy}} {\em Let $G$ be a locally
  compact ${\bf[IN]}$ group and $A$ be a unital Banach algebra. If
  either $G$ is an ${\bf [SIN]}$ group or $\Z(A) =\C1 $, then
\[
	\mcal{Z}(\A) = \{f\in L^1(G, \mcal{Z}(A)): f \ \text{is
	  constant on the conjugacy classes of } G \}.
\]}       
On the other hand, for any two $C^*$-algebras $A$ and $B$, Haydon and
Wassermann (\cite{HW}) showed that $\mathcal{Z}(A \omin B) =
\mathcal{Z}(A) \omin \mathcal{Z}(B)$, which was later extended to any
$C^*$-tensor product by Archbold (\cite{arch1}).  Similar isometric
identification was obtained for the Haagerup tensor product of unital
$C^*$-algebras by Smith (\cite{smith}) and then for non-unital
$C^*$-algebras by Allen, Sinclair and Smith (\cite{ass}). Then, the
second named author along with Kumar (\cite{jk08}) obtained a
continuous (not necessarily isometric) isomorphism between
$\mathcal{Z}(A \oop B)$ and $ \mathcal{Z}(A) \oop \mathcal{Z}(B)$,
where $\oop$ denotes the operator space projective tensor product;
and, very recently, the first and the second named authors established
an isometric isomorphism for the Banach space projective tensor
product in \cite{GJ2}. However, such an identification is out of reach
for Banach algebras in full generality. For instance, if we take the
pathological example of a  Banach algebra $A$ with the
trivial multiplication, i.e., $ab=0$ for all $a, b \in A$, then
$\Z(L^1(G) \obp A) = L^1(G) \obp A$ but $\Z(L^1(G)) \obp \Z(A)\ (= (0)
\obp A)$ is trivial whenever $G$ is not an ${\bf [IN]}$
group. Interestingly when $A$ is unital, as an application of above
mentioned results, we obtain the following:\smallskip

\noindent{\bf \Cref{centerdistribution}.}  {\em 
  Let $G$ be a locally compact group and $A$ be a unital Banach
 algebra. If either
 \begin{enumerate}
\item $\mathcal{Z}(L^1(G)) $ is complemented in $L^1(G)$ by a
  projection of norm one with either $G$ discrete or $\Z(A) = \C1_A$,
  or,
\item $G$ is an ${\bf [FIA]}^-$ group, then
 \end{enumerate}
\[
\mathcal{Z}(L^1(G)) \obp \mcal{Z}(A) = \mathcal{Z}(L^1(G) \obp
        A).
\]
}

Prior to obtaining the above results, we identify all the Lie ideals
of the group algebra of a finite group in terms of some canonical
spaces determined by the irreducible characters of the group, in
Section 2.

\section{Lie ideals of finite group algebras}

To begin with, in this section, we identify the Lie ideals of the
(complex) group algebra of a finite group.  Since the group algebra of
any finite group is a finite direct sum of matrix algebras, we first
establish a basic result concerning Lie ideals of such algebras.

	 It is tempting to assume that for a direct sum of Banach
         algebras $A=\oplus_i A_i$, a closed subspace $L$ of $A$ is a
         Lie ideal if and only if $L = \oplus_i L_i$ for some closed
         Lie ideals $L_i$ in $A_i$.  However, this is not true as we
         shall see below.

For $n \in \mathbb{N}$, let $M_n$ denote the set of all $n \times n$
complex matrices; $I_n$ and $0_n$ denote its identity and zero
matrices, respectively; $O_{n}:=\{0_{n}\}$  and $sl_n$ denote the set
of all trace zero matrices in $M_n$. It is known that $M_n$ possesses
precisely $4$ Lie ideals, namely, $(0)$, $\C I_n$, $sl_n$ and $M_n$
(see \cite[Theorem 5]{herstein}). However, there are uncountably many
Lie ideals in the direct sums of matrix algebras, as we see from the
following observation.

\begin{prop} \label{Lieidealsofdirectsum}
	A subspace $L$ of $A:= \oplus_{i=1}^k M_{n_i}$ is a Lie ideal
        if and only if it satisfies
        \[
        \oplus_{i=1}^k \delta_i\, sl_{n_i} \subseteq
        L \subseteq \oplus_{i=1}^k \tilde\delta_i M_{n_i} + \oplus_{i=1}^k \C
        I_{n_i},
        \]
        for $\delta_i =\tilde\delta_i\in \{0, 1\}$, $1 \leq i \leq
        k$. 
\end{prop}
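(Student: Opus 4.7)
The strategy is to reduce the question to a per-factor analysis, exploiting the fact that elements from distinct summands $M_{n_i}$ commute inside $A$. Let $p_i : A \to M_{n_i}$ denote the canonical projection, and set $L_i := p_i(L)$, regarding $M_{n_i}$ also as the $i$-th coordinate subspace of $A$. For any $f \in L$ and $a \in M_{n_i} \subseteq A$, one has $[f, a] = [p_i(f), a]$, because $[p_j(f), a] = 0$ whenever $j \neq i$. This single computation simultaneously shows that each $L_i$ is a Lie ideal of $M_{n_i}$ and, more importantly, that $[L_i, M_{n_i}] \subseteq L \cap M_{n_i}$ (with $M_{n_i}$ again viewed as a subspace of $A$).

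By Herstein's classification $L_i \in \{O_{n_i}, \C I_{n_i}, sl_{n_i}, M_{n_i}\}$, and $[sl_{n_i}, M_{n_i}] = [M_{n_i}, M_{n_i}] = sl_{n_i}$ whenever $n_i \geq 2$. I would therefore set
\[
\delta_i = \tilde\delta_i := \begin{cases} 1, & L_i \in \{sl_{n_i}, M_{n_i}\}, \\ 0, & L_i \subseteq \C I_{n_i}. \end{cases}
\]
The inclusion $[L_i, M_{n_i}] \subseteq L \cap M_{n_i}$ then delivers the lower bound $\oplus_i \delta_i\, sl_{n_i} \subseteq L$. For the upper bound, any $f \in L$ has $p_i(f) \in L_i$, which lies in $\C I_{n_i}$ when $\tilde\delta_i = 0$ and trivially in $M_{n_i}$ otherwise; decomposing $f$ accordingly places it inside $\oplus_i \tilde\delta_i M_{n_i} + \oplus_i \C I_{n_i}$.

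For the converse, given $f$ in the upper envelope and $a = \sum_i a_i \in A$, the commutator splits as $[f, a] = \sum_i [p_i(f), a_i]$; the $i$-th summand vanishes when $\tilde\delta_i = 0$ (since $p_i(f) \in \C I_{n_i}$ is central in $M_{n_i}$), and otherwise lies in $sl_{n_i}$, which is contained in $L$ by the lower bound. Hence $[f, a] \in L$ and $L$ is a Lie ideal.

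The main subtlety I anticipate is the potential gap between $L_i = p_i(L)$ and $L \cap M_{n_i}$: a priori these need not coincide, and the whole force of the argument rests on using the cross-factor commutativity to show that $[L_i, M_{n_i}]$ lands in $L \cap M_{n_i}$ rather than only in $L_i$. This is what forces $\delta_i$ to equal $\tilde\delta_i$ and prevents, say, a configuration where some $L_i = M_{n_i}$ but the full copy of $sl_{n_i}$ fails to sit inside $L$. The degenerate case $n_i = 1$ (where $sl_{n_i} = O_{n_i}$ and $M_{n_i} = \C I_{n_i}$) is handled vacuously, since both choices of $\delta_i$ give identical constraints there.
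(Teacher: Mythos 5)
Your proof is correct, but it takes a genuinely different route from the paper's. The paper invokes the Bre\v{s}ar--Kissin--Shulman characterization of closed Lie ideals of a $C^*$-algebra --- $L$ is a closed Lie ideal iff $[J,A]\subseteq L\subseteq N(J)$ for some closed two-sided ideal $J$, where $N(J)=\{x\in A:[x,a]\in J\ \forall\, a\in A\}$ --- and then merely enumerates the ideals $J=\oplus_i J_{n_i}$ of the direct sum, computes $[J,A]$ blockwise via $[M_n,M_n]=sl_n$, and checks that $N(J)=J+\oplus_i\C I_{n_i}$. You instead argue coordinate-wise from scratch: the observation that $[f,a]=[p_i(f),a]$ for $a$ supported in the $i$-th block shows simultaneously that each $L_i=p_i(L)$ is a Lie ideal of $M_{n_i}$ and that $[L_i,M_{n_i}]\subseteq L\cap M_{n_i}$, after which Herstein's classification of the Lie ideals of a single $M_n$ pins down each block; you correctly identify that landing in $L\cap M_{n_i}$ rather than merely in $L_i$ is the step that forces $\delta_i=\tilde\delta_i$ and rules out, say, $L_i=M_{n_i}$ with $sl_{n_i}\not\subseteq L$, and you handle the degenerate $n_i=1$ blocks. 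Your route is more elementary and self-contained, needing only Herstein's single-block classification (which the paper cites in the same section anyway) and applying verbatim to arbitrary subspaces with no closedness hypothesis --- though in finite dimensions that is automatic; the paper's route is shorter given the cited theorem and fits its recurring theme of describing Lie ideals through two-sided ideals and their normalizers.
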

\begin{proof}
 It is known that a closed subspace $L$ of $C^*$-algebra $A$ is a Lie
 ideal if and only if there exists a closed ideal $J$ in $A$ such that
 $[J,A] \subseteq L \subseteq N(J)$, where $N(J):=\{x\in A: [x,
   a]\in J\text{ for all } a \in A\}$ (see \cite[Corollary 5.26, Theorem
   5.27]{bks}). Also, $J = \oplus_{i=1}^k J_{n_i}$, where $J_{n_i} =
 O_{n_i}$ or $M_{n_i}$ for $1 \leq i \leq k$, so that $[J,A] =
 \oplus_{i=1}^k[J_{n_i} ,M_{n_i}]$.  Then, note that $[J_{n_i}
   ,M_{n_i}] = sl_{n_i}$ when $J_{n_i} = M_{n_i}$ and $[J_{n_i}
   ,M_{n_i}] = O_{n_i}$ otherwise \cite[Theorem 5]{herstein}. The
 result now follows from the fact that $N(J) = J + \oplus_{i=1}^k \C
 I_{n_i}$, which can be verified easily.
\end{proof}

Recall that for a finite group $G$, the vector space
$\C[G]:=\{\sum_{x\in G} a_x x: a_x \in \C\}$ is a unital $*$-algebra with
multiplication induced by group multiplication and involution obtained
as conjugate linear extension of the map $G \ni x \mapsto x^*:=
x^{-1}\in G$, and is known as the {\em group algebra} of $G$.

A unitary representation $\pi: G \ra \mathcal{U}(V)$  of $G$ (on an inner
product space $V$ - also called a $G$-module and - is written simply
as $(V, \pi)$) can be linearly extended to  a unital $*$-representation
$\pi:\C[G]\ra B(V)$ of $\C[G]$, and, conversely, a unital
$*$-representation of $\C[G]$ restricts to a unitary representation of
$G$ in a bijective way. Note that an arbitrary finite dimensional
(complex) representation $\rho: G \ra GL(V)$ can be treated as a
unitary representation with respect to the inner product $\langle v, w
\rangle:=\sum_x\langle \rho_x(v), \rho_x(w)\rangle_0$, where $\langle
\cdot, \cdot \rangle_0$ is any inner product on $V$.

The left regular representation $\lambda: G \ra
\mathcal{U}\big(\ell^2(G)\big)$ of $G$ given by \( \lambda_x(\xi)(y)=
\xi(x^{-1}y)\ \text{for } x, y \in G, \xi \in \ell^2(G), \) extends to
an injective $*$-representation $\lambda: \C[G] \ra
B\big(\ell^2(G)\big)$. As a consequence, $\C[G]$ inherits a $C^*$-norm given by
$\|x\| = \|\lambda_x\|$ for $x \in \C[G]$. Being finite dimensional,
$\C[G]$ is a $C^*$-algebra with this norm and this is the unique
$C^*$-norm on $\C[G]$. If $G$ has $k$ conjugacy classes, say, $\{C_j :
1 \leq j \leq k\}$, then the center of $\C[G]$ is a $k$-dimensional
$*$-subalgebra with a basis given by $\{z_j:=\sum_{x\in C_j} x : 1
\leq j \leq k\}$.  Thus, from the structure theorem of (unital) finite
dimensional $C^*$-algebras, there exists  an algebra
  $*$-isomorphism $\psi: \C[G] \ra \oplus_{j=1}^k
M_{n_j}$, where each $n_j$ is the dimension of an irreducible unital
$*$-representation of $\C[G]$ (and hence of $G$).

\begin{remark} 
An algebra isomorphism from  $\C[G]$ onto $ \oplus_{j=1}^k M_{n_j}$ can also
be obtained from purely algebraic methods. For instance,  by
Artin-Molien-Wedderburn  Theorem, there exists an algebra isomorphism $\psi :
\C[G] \ra \oplus_{ j=1}^k M_{n_j}$, where each $n_j$ is the dimension of
an irreducible $G$-module.
\end{remark}

For any finite dimensional complex representation $(V, \pi)$ of $G$,
its character  $\chi : G \ra \C$ is given by $\chi(x):=
\text{trace}(\pi(x))$, where trace is not normalized and is defined
with respect to one (equivalently, any) basis of $V$.  Each such
character $\chi$ extends linearly to a map $\widetilde{\chi}: \C[G]
\ra \C$ and, by linearity of the trace map, we have $\widetilde{\chi}(z) =
\text{trace}(\pi(z))$ for all $z \in \C[G]$. Two representations of
$G$ are equivalent if and only if they have same characters (see
\cite[Theorem 14.21]{jamesliebeck}).  The character of an irreducible
representation (equivalently, all equivalent irreducible
representations) of $G$ is, in short, also called an {\em irreducible
  character} of $G$.  If $G$ has $k$ conjugacy classes, then $G$ has
precisely $k$ distinct irreducible characters. We assert the following
identification of Lie ideals of group algebras.

\begin{theorem}\label{LieidealsinCG}
  Let $G$ be a finite group with $k$ conjugacy classes. Let $\{\chi_j:
  1 \leq j \leq k\}$ denote the set of irreducible characters of $G$
  and $n_j$ denote the dimension of any representation corresponding
  to $\chi_j$. Let \( \omega_j :=\frac{n_j}{|G|}\sum_{x \in G}
  \chi_j(x^{-1}) x \in \C[G] \) and $K_j$ denote the ideal generated
  by $\omega_j$ for $1 \leq j \leq k$. Then,
  \begin{enumerate}
\item  $\{\omega_j: 1 \leq j \leq k\}$
 is the set of minimal central projections of $\C[G]$, and
  \item $ \{K_j:1\leq j \leq k\}$ is the set of distinct non-zero
    minimal ideals of $\C[G]$.
\end{enumerate}
In particular, a subspace
 $L$ of $\C[G]$ is a Lie ideal if and only if
        \[
        \sum_{j=1}^k \delta_j \ker(\widetilde{\chi_j})\omega_j
        \subseteq L \subseteq \sum_{j=1}^k \tilde\delta_j K_j +
        \mathrm{span}\{\omega_j : 1 \leq j \leq k \},
        \]
        for $\delta_j = \tilde\delta_j\in \{0,1\}$, $1 \leq j \leq
        k$. 
  \end{theorem}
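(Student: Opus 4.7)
The plan is to reduce the Lie ideal statement to Proposition \ref{Lieidealsofdirectsum} via the $*$-isomorphism $\psi : \C[G] \to \oplus_{j=1}^k M_{n_j}$ recalled just before the theorem. The key step is the explicit identification, under $\psi$, of $\omega_j$ with the unit $I_{n_j}$ of the $j$-th summand, of $K_j$ with that whole summand, and of $\ker(\widetilde{\chi_j})\omega_j$ with the subspace $sl_{n_j}$ of trace-zero matrices in the $j$-th summand.

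For (1), I would first verify the algebraic identities $\omega_j \omega_\ell = \delta_{j\ell}\omega_j$ and $\sum_{j=1}^k \omega_j = 1$ by direct computation using the two orthogonality relations for irreducible characters,
\[
\frac{1}{|G|}\sum_{x \in G}\chi_i(x)\chi_j(x^{-1}) = \delta_{ij}
\quad \text{and} \quad
\sum_{j=1}^k n_j \chi_j(x) = |G|\,\delta_{x, e}.
\]
Centrality of $\omega_j$ follows since each $\chi_j$ is a class function. Hence the $\omega_j$ are mutually orthogonal central idempotents summing to $1$, yielding a two-sided ideal decomposition $\C[G] = \oplus_{j=1}^k K_j$. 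Combined with the Artin-Wedderburn structure, each $K_j$ is simple and isomorphic to $M_{n_j}$, and so is a minimal non-zero two-sided ideal; any other minimal non-zero ideal $I$ satisfies $I = I\omega_j \subseteq K_j$ for some $j$, forcing $I = K_j$, which proves (2). The unique non-zero central idempotent of $K_j \cong M_{n_j}$ is its identity $\omega_j$, so the $\omega_j$ exhaust the minimal central projections, giving (1).

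For the Lie ideal statement, I fix $\psi$ so that $\psi(\omega_j) = I_{n_j}$ in the $j$-th summand; then $\psi(K_j)$ is exactly that summand, and the coordinate projection of $\psi$ onto the $j$-th factor is a $*$-representation unitarily equivalent to a representation with character $\chi_j$, so that $\widetilde{\chi_j}(z) = \operatorname{trace}(\psi(z)_j)$ for every $z \in \C[G]$. For such $z$, $\psi(z\omega_j)$ is supported in the $j$-th coordinate with entry $\psi(z)_j$, and the condition $z \in \ker(\widetilde{\chi_j})$ is precisely $\psi(z)_j \in sl_{n_j}$. Thus $\psi(\ker(\widetilde{\chi_j})\omega_j)$ equals $sl_{n_j}$ placed in the $j$-th coordinate. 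Applying Proposition \ref{Lieidealsofdirectsum} to $\psi(L)$ and pulling back through the algebra isomorphism $\psi^{-1}$ (which preserves Lie ideals) then yields the claimed sandwich for $L$.

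The most delicate point I expect is the coherent choice of $\psi$: one must arrange that for every $j$ simultaneously the $j$-th coordinate projection of $\psi$ is unitarily equivalent to a representation with character $\chi_j$, so that the trace identity for $\widetilde{\chi_j}$ holds uniformly in $j$. Once this compatibility is in place, the transport of Proposition \ref{Lieidealsofdirectsum} through $\psi$ is straightforward linear algebra.
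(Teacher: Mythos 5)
Your proposal is correct and its overall skeleton coincides with the paper's: both reduce the Lie-ideal statement to \Cref{Lieidealsofdirectsum} by transporting $L$ through the $*$-isomorphism $\psi$, and both rest on identifying $\psi(\omega_j)$ with $I_{n_j}$ in the $j$-th summand, $\psi(K_j)$ with that summand, and $\psi(\ker(\widetilde{\chi_j})\omega_j)$ with $sl_{n_j}$ (your trace computation for the last point is essentially the paper's Proposition~\ref{idealsofCG}(2), including the same codimension/surjectivity observation). Where you diverge is in how the key identification $\psi(\omega_j)=I_{n_j}$ is proved. The paper argues module-theoretically: it decomposes $\C[G]$, viewed via $\psi$ as the left regular module, into the submodules $\psi^{-1}(\widetilde{M}_{n_j})$, computes that such a submodule has character $n_j\chi_j$, and invokes the isotypic projection formula (James--Liebeck, Proposition 14.10) to conclude that the corresponding central idempotent is $\frac{n_j}{|G|}\sum_x\chi_j(x^{-1})x=\omega_j$. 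You instead verify directly that the $\omega_j$ are mutually orthogonal central idempotents summing to $1$ and then match them with the Wedderburn blocks; this is more elementary and self-contained, though one small imprecision is that the relation $\omega_j\omega_\ell=\delta_{j\ell}\omega_j$ needs the \emph{generalized} orthogonality relation $\sum_x\chi_j(x^{-1})\chi_\ell(zx)=\delta_{j\ell}\frac{|G|}{n_j}\chi_j(z)$ (equivalently, the Schur-lemma computation $\pi_\ell(\omega_j)=\delta_{j\ell}I_{n_\ell}$), not just the plain first orthogonality relation you quote; the latter computation also settles, in one stroke, the labelling issue you rightly flag as the delicate point, namely that $\omega_j$ is supported exactly on the $\chi_j$-component so that the $j$-th coordinate of $\psi$ has character $\chi_j$. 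With that standard fact supplied, your argument is complete.
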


In order to prove \Cref{LieidealsinCG},
in view of Proposition \ref{Lieidealsofdirectsum} and the isomorphism
$\psi: \C[G] \ra \oplus_j M_{n_j}$, it suffices to make the following
observations, which are most probably folklore. We include the details
 just for the sake of completeness.

\begin{prop} \label{idealsofCG}
  Let $G$, ${\chi_j}$, $\omega_j$ and $\widetilde{\chi_j}$ for $1 \leq
  j \leq k$ be as in \Cref{LieidealsinCG} and $\psi: \C[G] \ra
  \oplus_{j=1}^k M_{n_j}$ be a $*$-isomorphism. Then,
\begin{enumerate}
\item $\omega_j = \psi^{-1}((0_{n_1}, \ldots, 0_{n_{j-1}}, I_{n_j},
  0_{n_{j+1}}, \ldots, 0_{n_k}))$; so that
  $\psi^{-1}\big(\widetilde{M}_{n_j}\big) = K_j$, and
    \item $\ker (\widetilde{\chi_j}) = \psi^{-1}\big(M_{n_1} \oplus \cdots
  \oplus M_{n_{j-1}}\oplus sl_{n_j}\oplus M_{n_{j+1}} \oplus \cdots
  \oplus M_{n_k}\big)$; so that $\ker(\widetilde{\chi_j})\omega_j =
  \psi^{-1}\big(O_{n_1}\oplus\cdots\oplus O_{n_{j-1}}\oplus
  sl_{n_j}\oplus O_{n_{j+1}}\oplus \cdots \oplus O_{n_k}\big)$
\end{enumerate}
for all $1 \leq j \leq k$.  \end{prop}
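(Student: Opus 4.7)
The plan is to identify $\psi$, up to reindexing of its target summands, with the direct sum $\oplus_{i=1}^k \pi_i$ of representatives of the irreducible $*$-representations of $\C[G]$, so that $\pi_i$ has character $\chi_i$; this matching is always achievable, because each projection of $\psi$ onto a simple summand $M_{n_j}$ is an irreducible $*$-representation of $\C[G]$ of dimension $n_j$, and such representations are parameterized up to equivalence by their characters. Once $\psi$ has been so arranged, every assertion reduces to a transparent computation inside the relevant matrix summand.

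For part (1), I would compute $\pi_i(\omega_j)$ directly from the defining formula for $\omega_j$, by invoking the Schur orthogonality relation
\[
\frac{1}{|G|}\sum_{x \in G}\chi_j(x^{-1})\,\pi_i(x) \;=\; \frac{\delta_{ij}}{n_j}\, I_{n_i},
\]
a standard consequence of Schur's lemma combined with the orthogonality of characters applied to the matrix coefficients of $\pi_i$. This yields $\pi_i(\omega_j) = \delta_{ij}\, I_{n_i}$, whence $\psi(\omega_j)$ is $I_{n_j}$ in the $j$-th slot and zero elsewhere, which is the first claim of (1). The identification $\psi^{-1}(\widetilde{M}_{n_j}) = K_j$ then follows at once: $\omega_j$ is central and corresponds to the identity of the simple algebra $M_{n_j}$, so the two-sided ideal it generates in $\C[G]$ is the entire $j$-th summand under $\psi$.

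For part (2), I would use that $\widetilde{\chi_j}(z) = \mathrm{trace}(\pi_j(z))$ for every $z \in \C[G]$, which is just the linear extension of the definition of $\chi_j$. Writing $\psi(z) = (A_1,\ldots,A_k)$, one has $\pi_j(z) = A_j$, so $\widetilde{\chi_j}(z) = \mathrm{trace}(A_j)$; this vanishes precisely when $A_j \in sl_{n_j}$ (with the remaining $A_i$ unconstrained), giving the first formula in (2). Multiplying elements of the resulting subspace by $\psi(\omega_j)$, which is the identity in the $j$-th slot and zero elsewhere, kills every coordinate but the $j$-th and leaves the trace-zero matrix there untouched, which delivers the second formula. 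I do not expect a genuine obstacle: beyond the preliminary reindexing of the codomain of $\psi$ and the (standard) invocation of Schur orthogonality, the rest is bookkeeping in a direct sum of matrix algebras.
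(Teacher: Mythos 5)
Your proposal is correct, and for part (1) it takes a genuinely different (and shorter) route than the paper. Both arguments begin the same way: the coordinate projections $\theta_j\circ\psi$ restricted to $G$ form a complete set of inequivalent irreducible representations, and after reindexing one may assume the $j$-th one has character $\chi_j$. From there the paper proves (1) structurally: it equips $\oplus_j M_{n_j}$ with its trace inner product, views $\C[G]$ as a module over itself by left multiplication, identifies $\psi^{-1}(\widetilde{M}_{n_j})$ as the isotypic submodule with character $n_j\chi_j$ (complementary to $\psi^{-1}(\widetilde{M}_{n_j}^{\perp})$, which contains no isomorphic irreducible submodule), and then invokes the standard formula (James--Liebeck, Proposition 14.10) for the idempotent implementing such a decomposition to conclude $e_j=\omega_j$. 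You instead compute $\pi_i(\omega_j)=\delta_{ij}I_{n_i}$ directly from Schur's lemma plus character orthogonality, which reads off $\psi(\omega_j)$ in one line; this is more elementary and avoids the module-theoretic setup entirely, at the cost of quoting the orthogonality relation $\frac{1}{|G|}\sum_x\chi_j(x^{-1})\pi_i(x)=\frac{\delta_{ij}}{n_j}I_{n_i}$ rather than a packaged projection formula (the two are of course two faces of the same underlying fact). For part (2) the arguments essentially coincide; if anything yours is marginally cleaner, since you characterize $\ker(\widetilde{\chi_j})$ directly as $\{z:\operatorname{trace}(A_j)=0\}$ rather than proving one inclusion and finishing with a codimension count as the paper does. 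The subsequent multiplication by $\psi(\omega_j)$ to isolate $sl_{n_j}$ in the $j$-th slot is the same in both. I see no gap.
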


\begin{proof}

Note that the $C^*$-algebra $\oplus_j M_{n_j}$ possesses a natural
inner product given by
\[
\langle(X_1, \ldots, X_k), (Y_1, \ldots,
Y_k)\rangle =\sum_{j=1}^k \text{trace}_{M_{n_j}}\big(Y_j^* X_j),
\]
where trace is not normalized. Let $\widetilde{M}_{n_j}:=
O_{n_1}\oplus \cdots \oplus O_{n_{j-1}}\oplus M_{n_j}\oplus
O_{n_{j+1}}\oplus \cdots\oplus O_{n_k}$. Then,
$\widetilde{M}_{n_j}^{\perp}= M_{n_1}\oplus \cdots \oplus
M_{n_{j-1}}\oplus O_{n_j}\oplus M_{n_{j+1}}\oplus \cdots\oplus
M_{n_k}$.  Also, let $U(n_j)$ denote the set of all unitaries in
$M_{n_j}$.

  The proof now is mainly a book keeping exercise.  Let $A:=\oplus_{i=1}^k
M_{n_i}$.  For each $1 \leq j \leq k$, consider the $*$-representation
$\theta_j: A \ra M_{n_j} = B(\C^{n_j})$ given by $\theta_j((X_1,
\ldots, X_k)) = X_j$. Then, $\{\theta_j: 1\leq j\leq k\}$ constitutes
a complete set of inequivalent irreducible $*$-representations of the
$C^*$-algebra $A$.

Taking $\pi_j= \big(\theta_j \circ \psi\big)_{|_{G}} :G \ra U(n_j)$,
we observe that $\{\pi_j : 1 \leq j \leq k\}$ forms a complete set of
inequivalent irreducible representations of $G$; so that \( \chi_j =
\chi_{\pi_j}\) for all $1 \leq j \leq k$.

Next, consider the canonical $*$-representation $\Gamma: A \ra B(A)$
of $A$ given by $\Gamma(a)(b)= ab$ for $ a, b \in A$, with respect to
the canonical inner product on $A$ described in the preceding
paragraph.  Then, $\widetilde{M}_{n_j}$ is an $A$-submodule of $A$. To
make it explicit, let this $A$-module be denoted by $(
\widetilde{M}_{n_j}, \rho_j)$. Then, $\rho_j \cong \big(\theta_j
\oplus \cdots \oplus \theta_j \big)$ ($n_j$-fold direct sum) as
$A$-modules for all $1\leq j \leq k$.\smallskip

$(1)$: Let $e_j := \psi^{-1}((0_{n_1}, ..., 0_{n_{j-1}}, I_{n_j},
0_{n_{j+1}}, ... ,0_{n_k}))$ for $ 1 \leq j \leq k$. Consider $\C[G]$
with the inner product induced by the isomorphism $\psi$. Then, again
via $\psi$ and $\Gamma$, $\C[G]$ becomes a $\C[G]$-module
(equivalently, $G$-module) and the action is again the left
multiplication on $\C[G]$. As a result,
$\psi^{-1}(\widetilde{M}_{n_j})$ and $
\psi^{-1}(\widetilde{M}_{n_j}^{\perp})$ are $\C[G]$-submodules of
$\C[G]$.

Now, let $\varphi_j:=
(\psi^{-1})_{|_{\widetilde{M}_{n_j}}}:\widetilde{M}_{n_j} \ra
\psi^{-1}\big( \widetilde{M}_{n_j} \big) $ for $1 \leq j \leq
k$. Then, $\varphi_j$ is a $G$-module isomorphism between
$\widetilde{M}_{n_j}$ and $ \psi^{-1}\big( \widetilde{M}_{n_j} \big)$;
thus, the $G$-module $\psi^{-1}\big( \widetilde{M}_{n_j} \big) $ is
also isomorphic to $n_j$-copies of $\pi_j$ (:= $(\theta_j \circ
\psi)_{|_G}$).  So, its character is given by $n_j \chi_j$ (see, for
instance, \cite[Page 141]{jamesliebeck}) and
$\psi^{-1}(\widetilde{M}_{n_j})$ and $
\psi^{-1}(\widetilde{M}_{n_j}^{\perp})$ contain no isomorphic
irreducible $\C[G]$-submodules. Also, $\C[G] =
\psi^{-1}(\widetilde{M}_{n_j}) \oplus
\psi^{-1}(\widetilde{M}_{n_j}^{\perp})$ and $1 = e_j + \tilde{e}_j$,
where $\tilde{e_j}:= \psi^{-1}((I_{n_1}, ..., I_{n_{j-1}}, 0_{n_j},
I_{n_{j+1}},$ $ ... , I_{n_k}))\in \widetilde{M}_{n_j}^{\perp}$.  So,
by \cite[Proposition 14.10]{jamesliebeck}, we obtain $e_j =
\mathsmaller{\frac{1}{|G|}}\mathsmaller{\sum}_{x \in G}
n_j\chi_j(x^{-1}) x = \omega_j$.\smallskip

  $(2)$: Let $(X_1, \ldots, X_k) \in M_{n_1}\oplus \cdots \oplus
M_{n_{j-1}}\oplus sl_{n_j}\oplus M_{n_{j+1}}\oplus\cdots \oplus
M_{n_k}$. Then,
\begin{eqnarray*}
  \widetilde{\chi}_j\big(\psi^{-1} ((X_1, \ldots, X_k) )\big) & = &
  \text{trace} \big( \theta_j \circ \psi\circ \psi^{-1} ((X_1, \ldots,
  X_k) )\big)\\ & = & \text{trace}(X_j)\\ & = & 0.
\end{eqnarray*}
So, $\psi^{-1}\big(M_{n_1}\oplus \cdots \oplus M_{n_{j-1}}\oplus
sl_{n_j}\oplus M_{n_{j+1}}\oplus\cdots \oplus M_{n_k}\big) \subseteq
\ker(\widetilde{\chi_j})$. And, both subspaces, being of co-dimension
1 in $\C[G]$, are therefore equal. \end{proof}

As an application, we provide a non-central Lie ideal in $\C[D_6]$
which is not an ideal.
\begin{example}\label{Lieidealnotideal}
	Let $D_6=\langle r, s : r^3 = 1, s^2 = 1, srs = r^{-1}\rangle
        $ denote the Dihedral group consisting of symmetries of an
        equilateral triangle. $D_6$ has three conjugacy classes and
        its irreducible characters are well understood - see \cite[Page
          121]{jamesliebeck}. If $\{ \chi_1, \chi_2, \chi_3 \}$ forms
        a complete set of characters of irreducible representations of
        $D_6$, then $\chi_3$ can be taken to be the one that satisfies
        $\chi_3(e) =2$, $\chi_3(r) = \chi_3(r^2) = -1$ and $\chi_3(s)
        = \chi_3(rs) = \chi_3(r^2s) = 0$.
        
        Let $L:= \ker(\widetilde{\chi_3})\omega_3$. Then, by
        \Cref{LieidealsinCG}, $L$ is a Lie ideal in $\C[D_6]$; and it
        is non-central. Also, it is routine to check that $L = \{
        c_1(r-r^2)+ c_2(s - r^2s) + c_3(rs -r^2s) : c_i \in \C \}$,
        so, being a 3-dimensional subspace, it can not be an ideal of
        $\C [D_6] \cong \C \oplus \C \oplus M_2$.
        \end{example}

\section{Closed Lie ideals of generalized group algebras}

First, recall that for a general measure space $(\Omega, \mathcal{M},
\mu)$ and a Banach space $X$, a function $f: \Omega \ra X$ is said to
be
\begin{enumerate}
\item  {\em simple $(\mathcal{M}, \mu)$-measurable}  if $f$ takes
  only finitely many values and $f^{-1}(x) \in \mathcal{M}$ for every $x
  \in X$.
\item {\em $(\mathcal{M}, \mu)$-measurable} if there exists a sequence $\{s_n\}$
of $X$-valued simple $(\mathcal{M}, \mu)$-measurable functions on $\Omega$ such
that $s_n\ra f$ a.e.  $[\mu]$.
\item {\em Bochner integrable} if there exists a sequence $\{s_n\}$ of
  $X$-valued simple $(\mathcal{M}, \mu)$-measurable functions on
  $\Omega$  with $\mu(\mathrm{supp}(s_n)) < \infty$ for all $n$ such that
  $s_n\ra f$ a.e.  $[\mu]$ and $\int_{\Omega}\|f - s_n\| d\mu \ra 0$,
  where $\|g\|$ denotes the scalar valued function $x \mapsto
  \|g(x)\|$.
\end{enumerate}
 It is known that $f$ is Bochner integrable if and only if $f$ is
 $(\mathcal{M}, \mu)$-measurable and $\|f\|\in L^1(\Omega, \mu)$.  For such
an $f$, its {\em Bochner integral} is defined as $\int_{\Omega} f(x) d\mu =
 \lim_n \int_{\Omega} s_n(x) d\mu$. And, the space of $X$-valued
 Bochner integrable functions on $\Omega$ is denoted by $L^1(\Omega,
 X)$. See \cite{ryan} for further details.\smallskip

We now come to the objects of present interest, namely, {\em generalized group
algebras}. As is standard, by a {\em locally compact group} we shall mean a
topological group which is both Hausdorff and locally compact.

Let $G$ be a locally compact group with a left Haar measure $m$. Then,
there exists a continuous group homomorphism $\Delta: G \rightarrow
\R_{>0}$ such that $m(Sx) =\Delta(x)m(S)$ for all $S \in B_G$ and $x
\in G$, where $B_G$ denotes the Borel $\sigma$-algebra of $G$.  This
group homomorphism is known as the {\em modular function} of $G$. It
is a well known fact that when $A$ is a Banach algebra, the space \(
L^1(G, A)\), known as a {\em generalized group algebra}, is a Banach
algebra with respect to the multiplication given by the convolution
$(f*g)(x) := \int_G f(xs)g(s^{-1})ds$ and norm given by $\|f\|_1=
\int_G \|f\|(x) dx$ - see \cite{ kaniuthbook, ryan} for details.

For every $a \in A$ and $f \in L^1(G)$, define $fa: G \ra A$ by
$(fa)(x) = f(x)a$ for all $x\in G$. It is easily seen that $fa \in \A$
for all $a \in A$ and $f \in L^1(G)$. We have the following well known
identification.

\begin{theorem}  \label{L1GA}   	
The map $L^1(G) \ot A \ni \sum_i f_i \ot a_i \mapsto \sum_i f_ia_i \in
\A$ extends to an isometric isomorphism from $L^1(G) \obp A$ onto
$L^1(G, A)$.

In particular,
 the subspace  $\mathrm{span}\{ fa : f \in L^1(G), a \in A  \}$ is dense in $\A$.
\end{theorem}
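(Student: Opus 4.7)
The plan is to produce an isometric inverse $\Psi$ to $\Phi$ by exploiting the density of simple functions in $L^1(G,A)$. First, the bilinear map $(f,a) \mapsto fa$ satisfies $\|fa\|_1 = \|f\|_1 \|a\|$, so it has projective norm one and, by the universal property of $\obp$, extends to a contractive linear map $\Phi: L^1(G)\obp A \to L^1(G,A)$ with $\Phi(f\otimes a) = fa$. It then remains to show that $\Phi$ is an isometric bijection.

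Next, for each $A$-valued simple function $s \in L^1(G,A)$ written in canonical form $s = \sum_{i=1}^n \chi_{E_i} a_i$, with the $E_i$ pairwise disjoint of finite measure and the $a_i$ distinct and nonzero, I would set $\Psi(s) := \sum_{i=1}^n \chi_{E_i} \otimes a_i \in L^1(G) \otimes A$. A refinement-of-supports argument inside the algebraic tensor product, based on the identity $\chi_{E_1 \sqcup E_2} \otimes a = \chi_{E_1} \otimes a + \chi_{E_2} \otimes a$, shows that $\Psi$ is well-defined and linear on the subspace of simple functions and satisfies $\Phi \circ \Psi = \mathrm{id}$ there. Combining this with the contractivity of $\Phi$ yields
\[
\|s\|_1 = \|\Phi(\Psi(s))\|_1 \le \|\Psi(s)\|_\pi \le \sum_{i=1}^n m(E_i)\|a_i\| = \|s\|_1,
\]
so $\Psi$ is in fact an isometry on this subspace. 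Since simple functions are dense in $L^1(G,A)$ by the definition of Bochner integrability, $\Psi$ extends to a linear isometry $\Psi: L^1(G,A) \to L^1(G)\obp A$, and $\Phi \circ \Psi = \mathrm{id}$ holds on all of $L^1(G,A)$ by continuity. Conversely, approximating any $f \in L^1(G)$ by scalar-valued simple functions $f_n$ and passing to the limit gives $\Psi(fa) = f \otimes a = \Psi(\Phi(f \otimes a))$, so $\Psi \circ \Phi = \mathrm{id}$ on elementary tensors and hence on all of $L^1(G)\obp A$.

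The ``in particular'' conclusion then falls out: the algebraic tensor product $L^1(G)\otimes A$ is dense in $L^1(G)\obp A$ by construction, and its image under the isomorphism $\Phi$ is precisely $\mathrm{span}\{fa : f \in L^1(G),\, a \in A\}$, which is therefore dense in $L^1(G,A)$. The only non-routine point in the argument is the well-definedness and linearity of $\Psi$ on simple functions; this is where one must carry out the refinement-of-supports manipulation to reduce an arbitrary finite sum $\sum_j \chi_{F_j}\otimes b_j$ in $L^1(G)\otimes A$ to disjoint canonical form, and to check that two different canonical representations of the same simple function yield the same element of $L^1(G)\otimes A$.
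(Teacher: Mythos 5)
Your argument is correct. Note, however, that the paper offers no proof of \Cref{L1GA} at all: it is stated as a ``well known identification'' (the standard reference is Ryan's book, where $L^1(\mu)\obp X \cong L^1(\mu,X)$ is established), so there is no in-paper argument to compare against. Your route --- extend the contractive bilinear map $(f,a)\mapsto fa$ to $\Phi$ by the universal property, build an explicit isometric inverse $\Psi$ on simple functions in canonical form, squeeze $\|s\|_1 \le \|\Psi(s)\|_\pi \le \sum_i m(E_i)\|a_i\| = \|s\|_1$ using disjointness of the $E_i$, and then extend by density --- is essentially the textbook proof, and all the steps check out: the canonical form of a simple function is unique up to null sets and reordering, so well-definedness of $\Psi$ is immediate and only additivity requires the common-refinement manipulation you describe; the density of finitely supported simple functions is built into the definition of Bochner integrability used in the paper; and $\Psi\circ\Phi=\mathrm{id}$ on elementary tensors follows from continuity exactly as you say, since $\|f_n a - fa\|_1 = \|f_n-f\|_1\|a\|$ and $\|f_n\ot a - f\ot a\|_\pi \le \|f_n-f\|_1\|a\|$. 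The one place to be slightly careful is the final inequality chain: the identity $\|s\|_1=\sum_i m(E_i)\|a_i\|$ genuinely needs the $E_i$ pairwise disjoint, which your canonical form guarantees, and it is this disjointness that makes the naive upper bound for the projective norm already optimal. The ``in particular'' statement follows as you indicate.
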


As per the requirements of this article, we discuss some useful
properties, whose analogues are well known for group algebras
(see, for instance, \cite{ kaniuthbook, palmer}).

  For each $x \in G$ and $f \in L^1(G,A)$, one defines $x \cdot f, f
  \cdot x: G \ra A$ by
  \[
  (x\cdot f)(y) :=
  f(x^{-1}y) \text{ and } (f\cdot x)(y) := f(yx)\ \text{for all } y \in G.
  \]
  It can be easily verified that $x\cdot f, f\cdot x \in \A$ for all
  $x \in G$ and $f \in \A$.

  \begin{lemma}\label{f-delta-relation}
   For any $f \in \A$, the mapping $G \ni x \mapsto \Delta
   (x^{-1})f(x^{-1}) \in A$ is Bochner integrable and
    \begin{equation}\label{f-inverse}
  \int_{G} f (x) dx = \int_G \Delta (x^{-1})f(x^{-1}) dx.
        \end{equation}
    Also, for any $y \in G$ and $f \in \A$, we have
    \begin{equation}\label{f-Delta}
    \int_G (f\cdot y)(x) dx = \Delta({y^{-1}}) \int_G f (x) dx.
\end{equation}
\end{lemma}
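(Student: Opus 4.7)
The plan is to establish both identities by reducing them to the corresponding well-known identities for scalar $L^1(G)$ functions, namely that $\int_G g(x)\,dx = \int_G \Delta(x^{-1})g(x^{-1})\,dx$ and $\int_G g(xy)\,dx = \Delta(y^{-1})\int_G g(x)\,dx$ for every $g \in L^1(G)$, which are standard consequences of the defining properties of $\Delta$ and the left Haar measure.

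First, I would verify that the relevant vector-valued functions are actually Bochner integrable, so that all integrals in sight make sense. Since the inversion map $x\mapsto x^{-1}$ is a homeomorphism of $G$ onto itself and $\Delta$ is a continuous scalar function, the composition $x\mapsto \Delta(x^{-1})f(x^{-1})$ is $(B_G, m)$-measurable whenever $f$ is. Applying the scalar identity to the nonnegative function $\|f\|\in L^1(G)$ gives
\[
\int_G \Delta(x^{-1})\|f(x^{-1})\|\,dx \;=\; \int_G \|f(x)\|\,dx \;<\;\infty,
\]
so Bochner integrability is guaranteed. A similar argument, using that right translation $x\mapsto xy$ is a homeomorphism of $G$, shows that $f\cdot y$ is measurable and satisfies $\int_G \|f(xy)\|\,dx = \Delta(y^{-1})\int_G \|f\|(x)\,dx <\infty$.

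For the identities themselves, I see two natural routes. Approach (a): apply an arbitrary $\phi\in A^*$ to both sides and use the standard fact that the Bochner integral commutes with bounded linear maps, i.e.\ $\phi\bigl(\int_G f\bigr)=\int_G \phi\circ f$. In each case both sides collapse to the scalar identity applied to $\phi\circ f\in L^1(G)$ (the scalar $\Delta(x^{-1})$ pulls outside $\phi$), and Hahn-Banach finishes the argument since $A^*$ separates points. Approach (b): by \Cref{L1GA}, the elementary functions $ga$ with $g\in L^1(G)$ and $a\in A$ span a dense subspace of $\A$. For such $f=ga$, $\int_G f(x)\,dx=\bigl(\int_G g(x)\,dx\bigr)a$, and both identities reduce at once to their scalar counterparts for $g$. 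The bounds from the preceding paragraph show that each of the three linear maps $f\mapsto \int_G f$, $f\mapsto \int_G \Delta(x^{-1})f(x^{-1})\,dx$, and $f\mapsto \int_G (f\cdot y)(x)\,dx$ is continuous from $\A$ to $A$, so density extends the identities from elementary tensors to all of $\A$.

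I do not expect any serious obstacle here. The only step that deserves a touch of care is the measurability claim after composing with the homeomorphism $x\mapsto x^{-1}$ and multiplying by the continuous function $\Delta$; this is entirely routine because approximating simple measurable functions are carried to simple measurable functions by a homeomorphism, and the scalar Haar-measure identities applied to $\|f\|$ furnish the $L^1$-control of norms needed for Bochner integrability.
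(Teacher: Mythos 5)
Your proposal is correct, and both of your suggested routes would work; they differ from the paper's argument mainly in how the reduction to the scalar Haar-measure identities is organized. The paper works directly with the definition of the Bochner integral: it fixes a sequence $\{s_n\}$ of simple measurable functions with $\int_G\|f-s_n\|\,dx\to 0$, verifies each identity for the simple $s_n$ (where it collapses to the scalar identity applied to indicator functions), and then controls the passage to the limit by applying the scalar identities to the nonnegative functions $\|f-s_n\|$ — this is the same device you use to get the $L^1$ bound, just deployed on the differences rather than on $\|f\|$ alone. Your approach (b) is structurally the same density-plus-continuity argument with the elementary tensors $ga$ replacing the simple functions as the dense class; your approach (a), via functionals $\phi\in A^*$ and the fact that the Bochner integral commutes with bounded linear maps, is the cleanest and genuinely different route, since it bypasses any explicit approximation once Bochner integrability of $x\mapsto\Delta(x^{-1})f(x^{-1})$ is in hand. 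The one point you rightly flag as needing care — that inversion and right translation carry $(B_G,m)$-measurable functions to $(B_G,m)$-measurable functions — does require knowing that these homeomorphisms map $m$-null sets to $m$-null sets (which holds because $dm(x^{-1})=\Delta(x^{-1})\,dm(x)$ with $\Delta>0$, and $m(Sx)=\Delta(x)m(S)$); the paper handles this implicitly by citing the scalar facts from Palmer and applying them to the norm functions, so neither treatment is more rigorous than the other on this point.
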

\begin{proof}
  Fix a sequence $\{s_n\} \subset \A$ of simple $(B_G, m)$-measurable functions
  converging almost everywhere to $f$ and satisfying $\lim\limits_{n\ra
    \infty} \int_G \|(f - s_n)(x)\| dx = 0$. Since the sequence of scalar functions
  $\{\|f  - s_n  \|\}$ is contained in $ L^1(G)$, from \cite[page
    1484]{palmer}, we see that the map $G\ni x \mapsto
  \|\Delta(x^{-1})\big( f(x^{-1}) - s_n(x^{-1})\big)\|\in \C$ is in
  $L^1(G)$, for all $n$, and
  \[
  \lim_n \int_G \| \Delta (x^{-1}) f (x^{-1}) - \Delta (x^{-1}) s_n
  (x^{-1})\| dx =\lim_n \int_G \|(f - s_n)(x)\| dx = 0.
  \]
 Thus, $$\int_G f(x) dx = \lim_n \int_G s_n(x) dx = \lim_n
\int_G  \Delta(x^{-1}) s_n(x^{-1}) dx = \int_G  \Delta(x^{-1}) f(x^{-1}) dx,$$
where the second equality holds because each $s_n$ is simple.

In order to prove (\ref{f-Delta}), fix a sequence $\{t_n\} \subset \A$
of simple $(B_G, m)$-measurable functions such that it converges
almost everywhere to $f\cdot y$ and $\lim\limits_{n\ra \infty} \int_G
\| (f\cdot y)(x) - t_n(x) \| dx = 0$. From \cite[page 1484]{palmer}
again, we obtain
\[
\int_G
(\| f \cdot y - t_n \| \cdot y^{-1})(x) dx = \Delta(y) \int_G \| f \cdot
y - t_n \|(x) dx
\]
for all $n$. In particular,
\begin{equation}\label{use}
\int_G \|f - t_n \cdot y^{-1}\|(x) dx =  \int_G (\|  f \cdot y - t_n \| \cdot y^{-1})(x) dx 
= \Delta(y) \int_G \|  f \cdot y - t_n \| (x) dx \rightarrow 0.
\end{equation}
Thus,
\begin{eqnarray*}
\int_G (f\cdot y)(x) dx & = & \lim_n \int_G t_n(x) dx \\ & = & \lim_n \int_G
t_n(xy^{-1}y) dx \\ & = & \Delta(y^{-1}) \lim_n \int_G t_n(xy^{-1})
dx\quad (\text{because each } t_n \text{ is simple)}\\
 & = & \Delta(y^{-1}) \lim_n \int_G (t_n \cdot y^{-1})(x) dx \\ 
 & = & \Delta(y^{-1}) \int_G f(x)dx. \qquad \qquad \text{(by \Cref{use})}
\end{eqnarray*}
This completes the proof.
\end{proof}

\begin{lemma} \label{cont}
 For each $f \in \A$, the maps $G\ni x \mapsto  x\cdot f,\, f
  \cdot x \in \A$ are continuous.
\end{lemma}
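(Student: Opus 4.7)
The plan is to first prove continuity at the identity $e \in G$ for a dense subclass of $L^1(G,A)$, then extend to all of $L^1(G,A)$ by a standard three-$\epsilon$ argument, and finally pass to an arbitrary base point using the semigroup property of translations.

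For the dense subclass I would take $C_c(G,A)$, the continuous compactly supported $A$-valued functions, which is dense in $L^1(G,A)$ (this can be seen either directly from the definition of the Bochner integral or, via \Cref{L1GA}, from the density of $C_c(G) \otimes A$). A function $\phi \in C_c(G,A)$ is uniformly continuous in the sense that for every $\epsilon > 0$ there is a neighborhood $U$ of $e$ with $\|\phi(y) - \phi(y')\|_A < \epsilon$ whenever $y^{-1}y' \in U$. For $x \in U$, the function $x \cdot \phi - \phi$ is supported in $K \cup xK$ (where $K = \mathrm{supp}(\phi)$), which, shrinking $U$ if needed, lies in a fixed compact set $K'$. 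Hence $\|x \cdot \phi - \phi\|_1 \leq m(K') \sup_y \|\phi(x^{-1}y) - \phi(y)\|_A < m(K')\epsilon$, which establishes continuity at $e$ for $\phi$. An entirely analogous argument, using the right-continuity of multiplication in $G$, handles $x \mapsto \phi \cdot x$ for $\phi \in C_c(G,A)$.

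For a general $f \in L^1(G,A)$ and $\epsilon > 0$, pick $\phi \in C_c(G,A)$ with $\|f - \phi\|_1 < \epsilon$. By left-invariance of the Haar measure, $\|x \cdot f - x \cdot \phi\|_1 = \|f - \phi\|_1$, and therefore
\[
\|x \cdot f - f\|_1 \leq 2\|f - \phi\|_1 + \|x \cdot \phi - \phi\|_1 < 2\epsilon + \|x \cdot \phi - \phi\|_1,
\]
which gives continuity of $x \mapsto x \cdot f$ at $e$. Continuity at an arbitrary $x_0 \in G$ then follows from the identity $x \cdot f = (xx_0^{-1}) \cdot (x_0 \cdot f)$ applied to the element $x_0 \cdot f \in L^1(G,A)$.

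For the right translation $x \mapsto f \cdot x$, the same strategy applies but now the Haar measure is not invariant, so I replace left-invariance with the modular relation from \Cref{f-delta-relation}: applying \eqref{f-Delta} to the scalar function $\|f - \phi\|$ yields $\|f \cdot x - \phi \cdot x\|_1 \leq \Delta(x^{-1}) \|f - \phi\|_1$. Then
\[
\|f \cdot x - f \cdot x_0\|_1 \leq \bigl(\Delta(x^{-1}) + \Delta(x_0^{-1})\bigr) \|f - \phi\|_1 + \|\phi \cdot x - \phi \cdot x_0\|_1,
\]
and since $\Delta$ is continuous, $\Delta(x^{-1})$ stays bounded for $x$ near $x_0$, so the three-$\epsilon$ argument closes. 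The mildly technical point here --- and the only real obstacle --- is keeping track of the modular factor; once that is absorbed into the bounding constant, the argument is parallel to the left-translation case.
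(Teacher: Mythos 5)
Your proof is correct, and its overall skeleton --- approximate $f$ by an element of a dense subclass, control the two approximation terms by left invariance (resp.\ the modular identity of \Cref{f-delta-relation}) of the $L^1$-norm, prove the core estimate on the dense subclass, and reduce continuity at a general point to continuity at $e$ by the translation identity --- is exactly the paper's. The one genuine difference is the choice of dense subclass and, consequently, how the core estimate is obtained: the paper approximates by $f_0=\sum_i g_i a_i$ with $g_i\in L^1(G)$, $a_i\in A$, which reduces $\norm{x\cdot f_0-f_0}_1$ to the scalar quantities $\norm{x\cdot g_i-g_i}_1$ and lets it simply quote the classical continuity of translation on $L^1(G)$; you instead approximate by $\phi\in C_c(G,A)$ and prove the estimate from scratch via uniform continuity of compactly supported continuous functions. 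Your route is more self-contained (it does not import the scalar result) at the cost of redoing a standard argument; the paper's is shorter but leans on the cited fact from Kaniuth's book. Both are valid, since both subclasses are dense in $L^1(G,A)$. One cosmetic point: for the left translation $(x\cdot\phi)(y)=\phi(x^{-1}y)$ the relevant uniform continuity condition is the one controlling $\norm{\phi(y')-\phi(y)}$ when $y'y^{-1}$ is near $e$ (you wrote $y^{-1}y'$); since elements of $C_c(G,A)$ are both left and right uniformly continuous this is harmless, but on a nonabelian group the handedness should match the translation being estimated.
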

  
\begin{proof}
 Let  $\epsilon > 0$. Fix an $f_0 = \sum_{i=1}^{r} g_i
 a_i \in \text{span}\{g a : g \in L^1(G), a \in A \}$ such that $\|f -
 f_0\|_1 < \epsilon/3$.  For continuity of left multiplication, note
 that for any $x \in G$, we have
 \[
 \norm{x \cdot f - f}_1  \leq \norm{x \cdot f -x \cdot f_0 }_1 +
 \norm{ x \cdot f_0 - f_0 }_1 + \norm{f_0 -f}_1.
 \]
First, observe that
 \[
 \norm{ x \cdot   f_0 - f_0 }_1 
 = \mathsmaller{\|} x \cdot \big(\mathsmaller{\sum}_{i=1}^{r} g_i a_i\big) -
   \mathsmaller{\sum}_{i=1}^{r} g_i a_i \mathsmaller{\|}_1 
   \leq \mathsmaller{\sum}_{i=1}^{r} \norm{ (x\cdot g_i -g_i)}_1 \| a_i \|.
 \]
 Using continuity of left multiplication by elements of $G$ in
 $L^1(G)$ (see \cite[$\S A.4$]{kaniuthbook}), there exists a
 neighbourhood $V$ of $e$ such that $\sup_{i=1}^r\norm{ x \cdot g_i -
   g_i }_1 \leq \sup\{\|a_i\|: 1 \leq i \leq r\}^{-1}\epsilon/3r$ for
 all $x \in V$.  Then, using left invariance of Haar measure, we
 obtain $\norm{x \cdot f -x \cdot f_0 }_1 = \norm{f - f_0}_1 \leq
 \epsilon/3$ for all $x \in G$. Thus, we have $\|x \cdot f - f \|_1 <
 \epsilon$ for all $x \in V$. This proves continuity at $e$.

 Now, let $x \in G$ and $\{x_{\alpha}\}$ be a net converging to $x$ in
 $G$. Then, $x^{-1} x_{\alpha} \to e$ and, hence,
 \[
 \| x_{\alpha} \cdot f - x \cdot f \|_1 = \| x \cdot (x^{-1}
 x_{\alpha} \cdot f - f) \|_1 = \| (x^{-1} x_{\alpha} \cdot f - f)
 \|_1 \ra 0,
 \]
where the second equality follows by the left invariance of Haar measure.

 And, for continuity of the right multiplication  at $e$, following above
 steps, it suffices to show that $\norm{ f \cdot x - f_0 \cdot x}_1
 \leq \epsilon/3$ in a neighbourhood of $e$, which follows readily
 from the continuity of $\Delta$ and the relation
 \[
 \norm{ f
   \cdot x - f_0 \cdot x }_1 = \int_G \| \big((f-f_0) \cdot x \big) (y)\|dy =
 \Delta(x^{-1}) \int_G \| (f-f_0)  (y)\|dy,
 \]
where the last equality follows from the preceding lemma. Once
continuity at $e$ is established, then continuity at an arbibtrary $x
\in G$ is obtained on similar lines as above.
\end{proof}

That was all that we will require from the basic theory. We now proceed to
characterize the closed Lie ideals of generalized group algebras.

For every $a \in A$ and $f \in \A$, consider $af, fa: G \ra A$ given
by $(af)(x) = a f(x)$ and $(fa)(x) = f(x)a$ for all $x \in G$.  It is
easily seen that $af, fa \in \A$ for all $a \in A$ and $f \in \A$. In
1969, Laursen \cite{laursen} had shown if $A$ is a Banach algebra with
approximate identity, then a closed subspace $I$ of $\A$ is an ideal
if and only if $I$ is $A$-translation invariant, i.e.,
\[
f\cdot x,\  x \cdot f,\ af,\ fa \in I \ \text{for all } f\in I, a \in A, x \in G.
\]

It is easily seen that, for a countable discrete group $G$ with the
(unimodular) counting Haar measure and any Banach algebra $A$, a
closed subspace $L$ of $\A$ is a Lie ideal if and only if
\begin{equation}\label{discrete}
(f\cdot x^{-1})a - a (x \cdot f) \in L \ \text{for all } f \in L, a
  \in A, x \in G.
\end{equation}
Indeed, setting $\chi_x:=\chi_{\{x\}}$ for  $x \in G$, since
$\text{span} \{ a \chi_x : a \in A, x \in G\}$ is dense in $\A$, a
closed subspace $L$ of $\A$ is a Lie ideal if and only if
\[
f* (a\chi_x) - (a \chi_x)* f \in L\ \text{for all } f \in L, a \in A, x \in G.
\]
Then, note that
\[
 (f *(a \chi_x) \big) (t) = \mathsmaller{\sum}_{s\in G} f(ts) a \chi_x(s^{-1}) =
f(t x^{-1})a = \big( (f\cdot x^{-1})a\big) (t), \text{ i.e., } f *(a
\chi_x) = (f\cdot x^{-1})a;
\]
and, likewise, we have \( (a \chi_x) * f = a (x \cdot f)\). \smallskip

Unlike the collection $\{\chi_x: x \in G\}$ for a discrete group $G$,
there is no canonical way of identifying a copy of $G$ in $L^1(G)$ when
$G$ is not discrete. Thus, for an arbitrary locally compact group $G$,
we need a more analytical approach. We obtain the following general
form of the characterization observed in \ref{discrete}.

\begin{theorem}\label{LieidealsinL1G}
	 Let $G$ be a locally compact group
         and $A$ be a Banach algebra. Then, a closed subspace $L$ of
         $\A$ is a Lie ideal if and only if
	\begin{equation}\label{theorem-eqn}
	\Delta(x^{-1})(f\cdot x^{-1})a - a(x\cdot f ) \in L
	\end{equation}
	for every $f \in L$, $x \in G$ and $a \in A$.
\end{theorem}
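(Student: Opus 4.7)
Plan. The proof proceeds by relating the commutator $[f, ag]$, for $a \in A$ and $g \in L^1(G)$, to the $G$-translates appearing in (\ref{theorem-eqn}) via the elementary identity
\[
[f, ag] = f*(ag) - (ag)*f = (f*g)a - a(g*f),
\]
which holds because the scalar factor $g$ passes freely through products in $A$. This reduces both directions to computing $f*g$ and $g*f$ in terms of the $G$-translates of $f$.

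For the forward direction, fix a closed Lie ideal $L$, $f \in L$, $x \in G$ and $a \in A$. I would pick a bounded approximate identity $(u_V)$ in $L^1(G)$ with $u_V \geq 0$, $\int u_V = 1$ and $\mathrm{supp}(u_V) \subseteq V$ for shrinking neighborhoods $V$ of $e$, and set $h_V := x \cdot u_V \in L^1(G)$, so that $[f, ah_V] \in L$ for every $V$. The measure-preserving substitution $t = xv$ in the defining integral for $h_V * f$ produces
\[
h_V * f = \int_G u_V(v)\,(xv \cdot f)\,dv,
\]
which converges to $x \cdot f$ in $L^1(G,A)$ by \Cref{cont}. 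For $f * h_V$, the substitution $s = w^{-1}x^{-1}$ (decomposed as $s = (xw)^{-1}$ and handled by left-invariance together with the inversion formula (\ref{f-inverse})) yields
\[
f * h_V = \Delta(x^{-1})\int_G u_V(w)\,\Delta(w^{-1})\,\bigl(f\cdot(w^{-1}x^{-1})\bigr)\,dw,
\]
which converges to $\Delta(x^{-1})(f\cdot x^{-1})$ in $L^1(G,A)$ by \Cref{cont}, continuity of $\Delta$ at $e$, and the fact that $\int u_V(w)\Delta(w^{-1})\,dw \to 1$. Passing to the limit in $[f, ah_V] = (f*h_V)a - a(h_V*f)$ and using closedness of $L$ gives (\ref{theorem-eqn}).

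For the reverse direction, assume (\ref{theorem-eqn}) and aim to show $[f, h] \in L$ for every $h \in L^1(G, A)$. By \Cref{L1GA} together with continuity of the Lie bracket on the Banach algebra $L^1(G, A)$, it suffices to treat $h = ag$. Substituting $s = x^{-1}$ in the convolution integral for $f*g$ and invoking (\ref{f-inverse}), and directly for $g*f$, I would verify the Bochner integral identities
\[
f*g = \int_G g(x)\,\Delta(x^{-1})\,(f\cdot x^{-1})\,dx \qquad\text{and}\qquad g*f = \int_G g(x)\,(x\cdot f)\,dx
\]
in $L^1(G, A)$; Bochner integrability follows from $\|\Delta(x^{-1})(f\cdot x^{-1})\|_1 = \|x\cdot f\|_1 = \|f\|_1$ and $g \in L^1(G)$, with measurability supplied by \Cref{cont}. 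Inserting these into the formula for $[f, ag]$ gives
\[
[f, ag] = \int_G g(x)\,\bigl[\Delta(x^{-1})(f\cdot x^{-1})a - a(x\cdot f)\bigr]\,dx,
\]
and since the integrand takes values in $L$ by hypothesis, closedness of $L$ forces this Bochner integral into $L$.

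The principal technical obstacle is the modular-function bookkeeping in the two key substitutions: $s = w^{-1}x^{-1}$ in the forward direction is precisely what produces the prefactor $\Delta(x^{-1})$ in (\ref{theorem-eqn}), while $s = x^{-1}$ in the reverse direction is what identifies $f*g$ with the weighted integral of $\Delta(x^{-1})(f\cdot x^{-1})$. Both rest on (\ref{f-inverse}), and the $L^1$-convergence in the forward direction additionally requires \Cref{cont} together with $\Delta(w^{-1}) \to 1$ as $w \to e$.
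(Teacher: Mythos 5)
Your proof is correct, and for necessity it is essentially the paper's argument in different clothing: the paper tests $L$ against $\frac{1}{m(V)}\bigl(f*(a\chi_{xV})-(a\chi_{xV})*f\bigr)$, and since $\frac{1}{m(V)}\chi_{xV}=x\cdot u_V$ for the normalized characteristic-function approximate identity $u_V=\frac{1}{m(V)}\chi_V$, your test element $a\,h_V$ is the same one; the paper simply carries out the $L^1$-estimates by hand rather than phrasing the two limits as Bochner integrals against $u_V$. Where you genuinely diverge is sufficiency. The paper takes an arbitrary $\varphi\in C_c(G,A)$ and approximates $\varphi*f-f*\varphi$ by finite Riemann sums $\sum_i m(K_i)\bigl(\varphi(y_i)(y_i\cdot f)-\Delta(y_i^{-1})(f\cdot y_i^{-1})\varphi(y_i)\bigr)$ over a partition of $\mathrm{supp}(\varphi)$, using uniform continuity of the relevant maps; each summand lies in $L$ by hypothesis. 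You instead reduce to elementary tensors $ag$ via \Cref{L1GA} and write $[f,ag]$ as the Bochner integral of the $L$-valued map $x\mapsto g(x)\bigl(\Delta(x^{-1})(f\cdot x^{-1})a-a(x\cdot f)\bigr)$, concluding because Bochner integrals of functions valued in a closed subspace remain in that subspace. This is cleaner and avoids the partition bookkeeping, but it shifts the burden onto the vector-valued Fubini step identifying $f*g$ and $g*f$ with the stated $L^1(G,A)$-valued integrals, together with the measurability and essential separable-valuedness of $x\mapsto x\cdot f$ on the $\sigma$-finite support of $g$ (supplied by \Cref{cont} and inner regularity); the paper's Riemann sums prove exactly this identity by hand for continuous compactly supported integrands. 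Both routes are sound: yours buys a shorter sufficiency argument at the cost of more Bochner-integration machinery, while the paper's stays elementary and self-contained.
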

\begin{proof} 	Let $L$ be a closed Lie ideal in $\A$, $f \in L$, $x \in G$
	and $a \in A$.  Since $L$ is closed, it is enough to show that
        $\Delta(x^{-1})(f\cdot x^{-1})a - a(x\cdot f ) \in
        \ol{L}$. Let $\epsilon > 0$.

For any Borel set $U$ in $G$ and $a \in A$, let $a_U:=a\chi_U$. If $U$
is compact, then $a_U \in \A$ for all $a \in A$.  So, it is enough to
find a compact  neighbourhood $V$ of $e \in G$ such that
\[
\| \Delta(x^{-1})(f\cdot x^{-1})a - a(x\cdot f ) -
\mathsmaller{\frac{1}{m(V)}} ( f \ast (a_{xV}) - (a_{xV}) \ast f )
\|_1 <\epsilon.
\]
Note that for an arbitrary compact symmetric neighbourhood $V$ of $e$,
$a \in A$ and $x \in G$, we have
\begin{eqnarray*}
  \lefteqn{ m(Vx^{-1}) \left \|\Delta(x^{-1})(f\cdot x^{-1})a - a(x\cdot f )
          - \frac{1}{m(V)} ( f \ast (a_{xV}) - (a_{xV}) \ast f ) \right \|_1}
  \\&         \leq &  \left \| m(Vx^{-1}) \Delta(x^{-1})(f\cdot x^{-1})a -
  \mathsmaller{\frac{m(Vx^{-1})}{m(V)}} f \ast (a_{xV}) \right \|_1 \\
  & & \qquad \qquad \qquad + \
          \left \| m(Vx^{-1}) a(x\cdot f ) -
          \mathsmaller{\frac{m(Vx^{-1})}{m(V)}} (a_{xV}) \ast f \right \|_1.\qquad \qquad \qquad (*)
        \end{eqnarray*}
Let $E:= \left\{ y \in G: \left\| \big( m(Vx^{-1}) (f\cdot x^{-1})a - f
\ast (a_{xV}) \big) (y) \right\| > 0\right\}$. It is easily seen that $E$
is a $\sigma$-finite Borel subset of $G$.   We have
        \begin{align*}
          & \left\| m(Vx^{-1}) \Delta(x^{-1})(f\cdot x^{-1})a -
          \frac{m(Vx^{-1})}{m(V)} f \ast (a_{xV}) \right\|_1\\ & =
          \Delta(x^{-1}) \int_G \left\| \big( m(Vx^{-1}) (f\cdot
          x^{-1})a - f \ast (a_{xV}) \big) (y) \right\| dy \qquad
          (\text{since } m(Vx^{-1}) = \Delta(x^{-1}) m(V)) \\ & =
          \Delta(x^{-1}) \int_E \left \| (f\cdot x^{-1})(y)a
          \int_{Vx^{-1}} ds - \int_G f(ys) a_{x V}(s^{-1}) ds \right
          \| dy\\ & = \Delta(x^{-1}) \int_E \left \| \int_{Vx^{-1}}
          (f\cdot x^{-1})(y)a ds - \int_{Vx^{-1}} f(ys) a\, ds \right
          \| dy \qquad (\text{since $s^{-1} \in xV \iff s \in
            Vx^{-1}$)}\\ & = \Delta(x^{-1})\int_E \left \|
          \int_{Vx^{-1}} ( (f \cdot x^{-1}) a - (f\cdot s) a )(y) ds
          \right \|dy\\ & \leq \Delta(x^{-1}) \int_E \int_{Vx^{-1}}
          \left \| ( (f \cdot x^{-1}) a - (f\cdot s) a )(y) \right \|
          ds dy\\ & = \Delta(x^{-1}) \int_{Vx^{-1}} \int_E \left \| (
          (f \cdot x^{-1}) a - (f\cdot s) a )(y) \right \| dy ds
          \qquad (\text{by Tonelli's Theorem})\\ & \leq \Delta(x^{-1})
          \int_{Vx^{-1}} \left \| (f \cdot x^{-1}) a - (f\cdot s) a
          \right \|_1 ds\\ & \leq \Delta(x^{-1}) m(Vx^{-1}) \sup_{s
            \in Vx^{-1}} \| (f \cdot x^{-1} - f\cdot s) a \|_1\\ & =
          \Delta(x^{-1}) m(Vx^{-1})\ \sup_{t \in V} \| (f \cdot x^{-1}
          - f \cdot (tx^{-1})) a \|_1 \\ & = \Delta(x^{-1}) m(Vx^{-1})
          \ \sup_{t \in V} \| (f \cdot x^{-1} - f \cdot
          (t^{-1}x^{-1})) a \|_1\ \qquad(\text{since $V$ is
            symmetric)}\\ & \leq \Delta(x^{-1}) m(Vx^{-1}) \ \sup_{t
            \in V} \| f \cdot x^{-1} - (f \cdot x^{-1})\cdot
          t^{-1}\|_1\| a \| ; \end{align*} and, on the other hand,
        considering the $\sigma$-finite Borel set $E':=\{ y \in G: \|\big(m
        (V) a(x \cdot f) - a_{xV}*f\big)(y)\| > 0\}$, we obtain
     \begin{align*}
    & \left\| m(Vx^{-1}) a(x\cdot f ) -
       \mathsmaller{\frac{m(Vx^{-1})}{m(V)}} (a_{xV}) \ast f
       \right\|_1\\ & = \Delta(x^{-1}) \int_G \left \| \big(m (V) a(x
       \cdot f) - a_{xV}*f\big) \right \| (y) dy\\ & = \Delta(x^{-1})
       \int_{E'} \left \| a f(x^{-1}y) \int_V ds - \int_G a_{xV}(ys)
       f(s^{-1}) ds \right \| dy \\ & \leq \Delta(x^{-1}) \|a\| \int_V
       \int_{E'} \left \| (x \cdot f)(y) -(xs \cdot f)(y)) \right \|
       \ dy ds \\ & \leq \Delta(x^{-1}) m(V) \sup_{s \in V} \left \|
       a\|\| f - s\cdot f \right \|_1, \quad \qquad (\text{since
         $\|x\cdot\varphi\|_1 = \|\varphi\|_1 \ \forall\, \varphi \in
         \A$}) \end{align*} where the second last inequality follows
     by left invariance and Tonelli's Theorem.

     Note that, by \Cref{cont}, for each $\varphi \in \A$, the maps $G
     \ni s \mapsto s\cdot \varphi, \varphi \cdot s \in \A$ are
     continuous. So, we can choose a compact symmetric neighbourhood
     $V_1$ of $e$ such that
\[
\sup_{s\in V_1}\norm{f-
  s\cdot f}_1  <
\frac{\epsilon}{ 2 \norm{a}};
\]
 and another compact symmetric neighbourhood $V_2$ of $e$ such that 
 \[
\sup_{s\in V_2}\norm{f\cdot x^{-1}- f \cdot (xs)^{-1}}_1 <
\frac{\epsilon}{ 2 \norm{a} \Delta(x^{-1})}.
\] 
  Using a compact  neighbourhood $V$ of $e$ satisfying $V
  \subseteq V_1 \cap V_2$ in $(*)$, we readily obtain
  \[
\| \Delta(x^{-1})((f\cdot
	x^{-1})a) - (a(x\cdot f )) - 
\mathsmaller{\frac{1}{m(V)}} ( f \ast (a_{xV}) - (a_{xV}) \ast f )
\|_1 \leq \epsilon
  \]
for all $x \in V$, as was desired.\smallskip
             
Conversely, suppose that $L$ is a closed subspace of $\A$ such that
	\[
        \Delta(x^{-1})(f\cdot x^{-1})a - a(x\cdot f ) \in L\ \text{ for
	all } f \in L, x \in G \text{ and } a \in A.
        \]
We show that $L$ is a Lie ideal.  Since $C_c(G, A)$ is dense in $\A$,
it is enough show that $\varphi \ast f - f \ast \varphi \in
\overline{L}$ for every $f \in L$ and $\varphi \in C_c(G, A)$.  Let $f
\in L$, $\varphi \in C_c(G,A)$ and $\epsilon > 0$. Let
$F:=\text{supp}(\varphi)$. Then, $K :=F \cup F^{-1}$ is a symmetric
compact set containing the support of $\varphi$.

As left multiplication, right multiplication, inverse function, the
function $\varphi$ and the modular function are all continuous, there
exists a mutually disjoint covering (not necessarily open) $K_1,
\dots, K_r$ of $K$ such that
\[
\norm{\varphi(y)(y\cdot f)- \Delta(y^{-1})(f\cdot y^{-1})\varphi(y)-
  \varphi(z)(z\cdot f) + \Delta(z^{-1})(f\cdot z^{-1})\varphi(z)}_1
\leq \epsilon/m(K) 
\]
 for all  $y,z \in K_i, 1 \leq i \leq r.$ Fix $y_i \in K_i $
for every $1 \leq i \leq r$ . Then,
	\begin{align*}
	& \norm{\varphi \ast f - f \ast \varphi -
            \mathsmaller{\sum}_{i} m(K_i) \big( \varphi(y_i)(y_i \cdot
            f) - \Delta(y_i^{-1}) ( f \cdot y_i^{-1})
            \varphi(y_i)\big) }_1 \\ &= \int_G \left \| (\varphi \ast
          f)(x) - \mathsmaller{\sum}_{i} m(K_i) \varphi(y_{i})
          (y_{i}\cdot f)(x) - (f \ast \varphi)(x)  \right. \\ & \qquad \qquad \left. + 
          \mathsmaller{\sum}_{i} m(K_i) \Delta(y_i^{-1}) ( f \cdot
          y_i^{-1})(x) \varphi(y_i)) \right \| dx \\ & = \int_G \big
          \| \mathsmaller{\int}_{G} \varphi(xy)f(y^{-1}) dy -
          \mathsmaller{\sum}_{i} \mathsmaller{\int}_{K_i}
          \varphi(y_{i})(y_{i} \cdot f)(x)dy - \mathsmaller{\int}_{G}
          f(xy)\varphi(y^{-1}) dy \\ & \qquad \qquad +
          \mathsmaller{\sum}_{i} \mathsmaller{\int}_{K_i}
          \Delta(y_i^{-1}) ( f \cdot y_i^{-1})(x) \varphi(y_i) dy \big
          \|dx\\ & = \int_G \big \| \mathsmaller{\int}_{K}
          \varphi(y)f(y^{-1}x) dy - \mathsmaller{\sum}_{i}
          \mathsmaller{\int}_{K_i} \varphi(y_{i})(y_{i} \cdot f)(x)dy
          - \mathsmaller{\int}_{K} \Delta(y^{-1})(f \cdot y^{-1})(x)
          \varphi(y) dy\\ & \qquad \qquad + \mathsmaller{\sum}_{i}
          \mathsmaller{\int}_{K_i} \Delta(y_i^{-1}) ( f \cdot
          y_i^{-1})(x) \varphi(y_i) dy \big \|dx \quad \text{\hspace*{50mm} (by
            \ref{f-inverse})}\\ & = \int_G \big \|
          \mathsmaller{\sum}_{i} \mathsmaller{\int}_{K_i}\varphi(y)(y
          \cdot f)(x)dy - \mathsmaller{\sum}_{i}
          \mathsmaller{\int}_{K_i} \varphi(y_{i})(y_{i} \cdot f)(x)dy
          - \mathsmaller{\sum}_{i}
          \mathsmaller{\int}_{K_i}\Delta(y^{-1})(f \cdot y^{-1})(x)
          \varphi(y) dy \\ & \qquad \qquad +
          \mathsmaller{\sum}_{i}
          \mathsmaller{\int_{K_i}} \Delta(y_i^{-1})(f \cdot
          y_i^{-1})(x) \varphi(y_i) dy \big \| dx\\ &
           \leq \sum_{i}
          \int_{K_i} \int_{G} \left \| \varphi(y)(y \cdot f)(x) -
          \varphi(y_{i})(y_{i} \cdot f)(x) - \Delta(y^{-1}) (f \cdot
          y^{-1})(x)\varphi(y)\right.\\ &\qquad \qquad \left. + \Delta(y_i^{-1})(f \cdot y_i^{-1})(x)
          \varphi(y_i) \right \|dx dy\\ & 
          = \sum_{i} 
          \int_{K_i} \left \| \varphi(y)(y \cdot f) -
          \varphi(y_{i})(y_{i} \cdot f) - \Delta(y^{-1}) (f \cdot
          y^{-1})\varphi(y) + \Delta(y_i^{-1})(f \cdot y_i^{-1})
          \varphi(y_i) \right \|_1 dy\\ \ 
          & \leq \epsilon,
	\end{align*}
where the third last inequality is obtained by appealing to Tonelli's
Theorem on $E \times K_i$, where $E$ is a $\sigma$-finite Borel
subset of $G$ similar to the one considered  (right after $(*)$) while proving necessity. This proves the
result.
\end{proof}

\begin{cor}\label{LieidealsinL1GC}
 Let $G$ be a locally compact group. Then, a
 closed subspace $L$ of $L^1(G)$ is a Lie ideal if and only if
  \[  \Delta(x^{-1})f\cdot x^{-1}- x\cdot f \in L
  \]
  for all $f \in L$
  and $x \in G$.
\end{cor}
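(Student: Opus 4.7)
The plan is to derive this corollary as an immediate specialization of \Cref{LieidealsinL1G} to the scalar case $A = \C$. First I would invoke \Cref{L1GA} to identify $L^1(G,\C)$ isometrically with $L^1(G)$ as Banach algebras; under this identification, the left and right $G$-actions $f \mapsto x\cdot f$ and $f \mapsto f\cdot x^{-1}$ on the two sides correspond, since both are defined pointwise by the same formulas. Thus $L \subseteq L^1(G)$ is a closed Lie ideal if and only if it is a closed Lie ideal of $L^1(G,\C)$.

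Next, I would apply \Cref{LieidealsinL1G} in the case $A = \C$, which gives that a closed subspace $L \subseteq L^1(G)$ is a Lie ideal if and only if
\[
\Delta(x^{-1})(f\cdot x^{-1})a - a(x\cdot f) \in L
\]
for every $f \in L$, $x \in G$, and $a \in \C$. Since $A = \C$, the products $(f\cdot x^{-1})a$ and $a(x\cdot f)$ are just scalar multiples of the functions $f\cdot x^{-1}$ and $x\cdot f$, so the above rearranges as
\[
a\bigl[\Delta(x^{-1})\, f\cdot x^{-1} - x\cdot f\bigr] \in L.
\]
Because $L$ is a linear subspace, requiring this for every $a \in \C$ is equivalent to requiring it at $a = 1$, which is exactly the condition stated in the corollary.

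I do not anticipate any substantive obstacle here: the argument is a purely formal reduction. The only minor point that needs to be checked is that the isomorphism of \Cref{L1GA} is compatible with the $G$-actions appearing in \Cref{LieidealsinL1G}, which is immediate from the definitions.
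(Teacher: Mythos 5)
Your reduction is correct and is exactly how the paper intends the corollary to follow: it is stated without proof as the immediate specialization of \Cref{LieidealsinL1G} to $A=\C$, where scalar multiplication and linearity of $L$ reduce the condition to the case $a=1$. (The paper's subsequent remark gives a separate duality argument for sufficiency via $L^\infty(G)$, but that is an aside, not the intended proof.)
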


\begin{remark} When the Haar measue is $\sigma$-finite (equivalently, $G$ is $\sigma$-compact), $L^{\infty}(G)$ is the
dual space of $L^1(G)$; so, sufficiency in \Cref{LieidealsinL1GC}
follows easily when $A = \C$. Indeed, for $f \in L$, $h \in L^1(G)$
and $\phi \in L^{\perp}$, we have
	\begin{align*}
\phi (f \ast h - h \ast f) & = \int_{G} \phi(x) (f \ast h - h \ast
f)(x) dx \\ 
&= \int_{G} \phi(x) \Big( \int_{G} f(xy) h(y^{-1})dy -
\int_{G} h(xy)f(y^{-1})dy \Big )dx\\ 
& = \int_{G} \phi(x) \Big(\int_{G} h(y^{-1})f(xy) dy - \int_{G} h(y)f(y^{-1}x)dy \Big )dx \\
&= \int_{G} \phi(x) \Big(\int_{G} h(y)f(xy^{-1})\Delta(y^{-1}) - h(y)f(y^{-1}x) \Big)dydx \\
&= \int_{G}h(y) \Big( \int_{G} \phi(x) (\Delta(y^{-1}) f\cdot y^{-1}- y\cdot f)(x)dx \Big)dy =0.
	\end{align*}
So, by Hahn-Banach Theorem, we deduce that $L$ is a closed Lie ideal in $L^1(G)$.
\end{remark}

Since $(G\times \{ e_H\}) \cdot (\{e_G\} \times H)= G \times H$, it is
easily seen that a subspace $I$ of $L^1(G \times H)$ is an ideal if
and only if $f\cdot x$ and $x\cdot f \in I$ for every $f \in I$ and $x
\in (G\times \{ e_H\}) \cup (\{e_G\} \times H)$.  However, these
elements are not enough to characterize the closed Lie ideals of
$L^1(G \times H)$, as we illustrate in the following example.

\begin{example}
	Consider $G = H=D_6$. Consider the subspace $L$ of $\C[G\times
          H]$ given by
        \[
        L = \{ c_1(r,e)-c_1(r^2,e)+ c_2(s,e) -c_2(r^2s,e) + c_3(rs,e)
        -c_3(r^2s,e) : c_1,c_2,c_3 \in \C \}.
        \]
         Since the modular function of a finite group is identically $1$, it
 is readily seen that $\Delta(x^{-1}) f\cdot x^{-1}- x\cdot f \in L$
 for every $f \in L$ and $x \in (D_6 \times \{ e\}) \cup (\{e\} \times
 D_6)$. However, $L$ is not a Lie ideal in $\C[G \times H]$, as after
 some routine calculation, we observe that
	\begin{align*}
&	 \Big( (r,e)- (r^2,e) + (s,e) - (r^2s,e) + (rs,e)  - (r^2s,e)
          \Big)  (r,r)\\
          &- (r,r)  \Big( (r,e) - (r^2,e) + (s,e) -
          (r^2s,e) + (rs,e) - (r^2s,e) \Big) \\
          &= -(rs,r) + (s,r)
          -(rs,r) - (rs,r) + (s,r) + (s,r) \\
          &= -3(rs,r)+ 3(s,r)\notin L. \hspace*{80mm}         \qed
	\end{align*}
\end{example}

However, when $H$ is abelian, then a smaller collection of $G
\times H$ with an extra condition provides sufficiency for a subspace to be a Lie ideal.

\begin{cor}\label{H-abelian}
	Let $G$ and $H$ be locally compact
        groups and $H$ be abelian. Let $A$ be a Banach algebra and $L$
        be a closed subspace of $L^1(G \times H, A)$.  Then, $L$ is a
        Lie ideal if
        \[
        w\cdot f \in L\ \text{and }\ \Delta(z^{-1})(f\cdot
        z^{-1})a- a(z\cdot f) \in L\] \(\text{ for all}\  w\in \{e_G\}\times H, f \in
        L, a \in A \text{ and}\ z \in G \times \{ e_H\} \).
\end{cor}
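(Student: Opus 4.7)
The plan is to apply Theorem \ref{LieidealsinL1G} directly on the product group $G \times H$: it suffices to verify that $\Delta((g,h)^{-1})(f \cdot (g,h)^{-1})a - a((g,h) \cdot f) \in L$ for every $f \in L$, $a \in A$ and $(g,h) \in G \times H$. The strategy is to decompose an arbitrary element $(g,h)$ as a product of one factor from $G \times \{e_H\}$ and one from $\{e_G\} \times H$, and then use the abelian hypothesis on $H$ to absorb the second factor into $f$ while leaving $a$ with the first factor, so that the two given hypotheses apply cleanly.

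Concretely, I set $z := (g, e_H)$ and $w := (e_G, h)$ and record three elementary facts. First, $(g,h) = zw = wz$ because $z$ and $w$ commute in the direct product. Second, an abelian group is unimodular, so the modular function on $G \times H$ factors as $\Delta_{G \times H}(zw) = \Delta_G(g) \Delta_H(h) = \Delta_G(g)$; in particular $\Delta((g,h)^{-1}) = \Delta(z^{-1})$. Third, a direct computation using commutativity of $H$ gives $w \cdot f = f \cdot w^{-1}$ for every such $w$ and every $f \in L^1(G \times H, A)$, since both sides evaluate at $(t_1, t_2)$ to $f(t_1, h^{-1} t_2)$.

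With $F := w \cdot f = f \cdot w^{-1}$, which lies in $L$ by the first hypothesis, the commuting left and right actions of $z$ and $w$ give
\[
f \cdot (g,h)^{-1} = (f \cdot w^{-1}) \cdot z^{-1} = F \cdot z^{-1}, \qquad (g,h) \cdot f = z \cdot (w \cdot f) = z \cdot F.
\]
Substituting and using $\Delta((g,h)^{-1}) = \Delta(z^{-1})$ yields
\[
\Delta((g,h)^{-1})(f \cdot (g,h)^{-1})a - a((g,h) \cdot f) = \Delta(z^{-1})(F \cdot z^{-1})a - a(z \cdot F),
\]
which lies in $L$ by the second hypothesis applied to $F \in L$ and $z \in G \times \{e_H\}$. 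An appeal to Theorem \ref{LieidealsinL1G} then completes the proof.

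There is no serious obstacle here; the argument is essentially a bookkeeping reduction. The only nontrivial ingredient is the identity $w \cdot f = f \cdot w^{-1}$, which is precisely where commutativity of $H$ enters and makes the single hypothesis ``$w \cdot f \in L$'' secretly cover right translation as well. If $H$ were only unimodular but not abelian, one would additionally need to hypothesize $f \cdot w^{-1} \in L$, so the abelian assumption is the key economy of this corollary.
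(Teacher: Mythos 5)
Your proof is correct and follows essentially the same route as the paper's: both reduce to \Cref{LieidealsinL1G}, factor $(g,h)$ through $G\times\{e_H\}$ and $\{e_G\}\times H$, use that $\Delta$ is trivial on the $H$-factor, and exploit commutativity of $H$ via the identity $w\cdot f = f\cdot w^{-1}$. The only (immaterial) difference is the order of operations: you translate $f$ by $w$ first and then apply the bracket hypothesis to $F=w\cdot f$, whereas the paper applies the bracket hypothesis to $f$ first and peels off the $w$-translation at the end.
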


\begin{proof}
By \Cref{LieidealsinL1G}, it suffices to show that
$\Delta(x^{-1},y^{-1})\, \big(f\cdot (x^{-1},y^{-1})\big) a- a \big(
(x,y)\cdot f\big) \in L$ for all $ f\in L, a \in A$ and $(x, y)\in G
\times H$. Note that,  $\Delta_{|_{\{e_G\} \times H}}$ is the modular function of $H$ and
hence it is trivial on $H$. Thus, we have
\begin{eqnarray*}
\lefteqn{ \Delta((x^{-1},y^{-1}))\, \big(f\cdot (x^{-1},y^{-1})\big) a- a\,  \big((x,y)\cdot f\big)} \\ 
& = & \Delta((x^{-1},e_H))\Delta ((e_G,y^{-1}))  \big( f \cdot (x^{-1},e_H)(e_G,y^{-1})\big) a- a \big((x,e_H)(e_G,y)\cdot f\big) \\ 
& = & \Delta((x^{-1},e_H))\big( f\cdot (x^{-1},e_H) \cdot (e_G,y^{-1})\big) a - a \big ((x,e_H)\cdot f \cdot (e_G,y^{-1})\big) \\ 
& = & \Big(\Delta((x^{-1},e_H)) \big( f\cdot (x^{-1},e_H) \big) a - a \big( (x,e_H) \cdot f\big) \Big) \cdot (e_G,y^{-1}) \\ 
& = & (e_G,y)\cdot \Big( \Delta((x^{-1},e_H)) \big( f\cdot (x^{-1},e_H)\big) a - a \big( (x,e_H)\cdot f\big) \Big),
\end{eqnarray*}
 which, by hypothesis, belongs to $L$.
\end{proof}

\begin{remark}
  The first part of the hypothesis of \Cref{H-abelian} is not
  necessary. For instance, taking $G$ to be the trivial group, $H$ 
  abelian and $A=\C$, we observe that every subspace of $L^1(G \times H, A)$ is 
a Lie
  ideal as $ L^1(G \times H, A)  \cong L^1(H)$ is a commutative Banach algebra 
and if
  $L$ a subspace of $L^1(G \times H, A)$ which is not a left ideal, then $(e, 
y)\cdot
  f\notin L$ for some  $y \in H$ and $f\in L$.
\end{remark}

\subsection{A partial dictionary between the Lie ideals of $L^1(G)$ and those of  $L^1(G,A)$}\( \)

We now look for Lie ideals in $\A$ that can be obtained from those in
$A$.  Towards this end, for each subspace $F$ of $A$, we consider the
subspace
\[
\widetilde{F} := \{ f \in \A: f(x) \in F\ \text{for
  almost every}\ x \in G\}
\]
contained in $\A$.  Clearly, when $F$ is closed so is $\widetilde{F}$.
\begin{remark}\label{abelian-L-tilde}
Let $I$ be a closed subspace of $A$. Then, it is easily seen that $I$
is an ideal in $A$ if and only if $\widetilde{I}$ is an ideal in $\A$.

However, not every closed ideal in $\A$ is of the form
$\widetilde{I}$.  If $A$ is a simple Banach algebra and every closed
ideal of $\A$ is of the form $\tilde{I}$, then $\A$ will be simple as
$(0)$ and $A$ are the only choices for $I$ which yield
$\widetilde{(0)} = (0)$ and $\tilde{A} = \A$. But, in general, $\A$
need not be simple even if $A$ is so.  For instance, whenever $L^1(G)$
has a proper and non-trivial closed ideal, say, $J$, then $\ol {J \ot
  A}^{\gamma}$ is a proper (see \cite[page 2]{Guichardet}) and
non-trivial closed ideal of $\A$.
\end{remark}

In view of \Cref{abelian-L-tilde}), one could ask similar questions
for closed Lie ideals.  It turns out that the characterization
obtained in \Cref{LieidealsinL1G} helps us to answer these questions
appreciably well.

\begin{prop}\label{L-abelian}
Let $G$ be a locally compact group, $A$ be a Banach algebra and $L$ be
a closed subspace of $A$. If $\widetilde{L}$ is a Lie ideal in $\A$,
then $L$ is a Lie ideal in $A$. The converse holds when $G$ is
abelian.
\end{prop}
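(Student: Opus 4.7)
The plan is to leverage the pointwise characterization from \Cref{LieidealsinL1G} in both directions; once that theorem is available, each implication reduces to a short measure-theoretic manipulation.

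For the forward direction, I would specialize the characterization at $x=e$. Since $\Delta(e)=1$ and $e\cdot f = f\cdot e = f$, it collapses to
\[
fa - af \in \widetilde{L}\quad\text{for all}\ f \in \widetilde{L},\ a \in A,
\]
where the products $fa$ and $af$ are now \emph{pointwise}. Thus $f(y)a - af(y) \in L$ for almost every $y \in G$. To harvest a relation purely in $A$, fix $\ell \in L$, $a \in A$, and pick any nonzero $g \in L^1(G)$ (e.g.\ $g = \chi_K$ for a compact set $K$ with $m(K) > 0$). Then $g\ell \in \widetilde{L}$, as $(g\ell)(x) = g(x)\ell \in L$ for every $x$ (subspace property). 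Applying the displayed inclusion with $f = g\ell$ yields $g(y)(\ell a - a\ell) \in L$ a.e.\ $y$. Because $\{g\neq 0\}$ has positive measure, this set intersects the "good" full-measure set, producing a single $y_0$ with $g(y_0) \neq 0$ and $g(y_0)(\ell a - a\ell) \in L$. Dividing by the scalar $g(y_0)$ gives $\ell a - a\ell \in L$, so $L$ is a Lie ideal in $A$.

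For the converse with $G$ abelian, I would exploit two simplifications: (i)~every abelian locally compact group is unimodular, so $\Delta \equiv 1$; and (ii)~$yx^{-1} = x^{-1}y$ for all $x,y$, so $f\cdot x^{-1} = x\cdot f$ for every $f \in L^1(G,A)$. Hence the condition of \Cref{LieidealsinL1G} reduces to $(f\cdot x^{-1})a - a(f\cdot x^{-1}) \in \widetilde{L}$ for all $f \in \widetilde{L}$, $x \in G$, $a \in A$. Since right translation preserves null sets (up to a factor of $\Delta(x)$, which is harmless), $f \in \widetilde{L}$ implies $(f\cdot x^{-1})(y) = f(yx^{-1}) \in L$ for a.e.\ $y$, and then because $L$ is a Lie ideal in $A$,
\[
\bigl((f\cdot x^{-1})a - a(f\cdot x^{-1})\bigr)(y) = [f(yx^{-1}), a] \in L \quad \text{a.e.\ } y.
\]
That is exactly the statement $(f\cdot x^{-1})a - a(f\cdot x^{-1}) \in \widetilde{L}$. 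Closedness of $\widetilde L$ when $L$ is closed is routine: an $L^1$-convergent sequence in $\widetilde L$ admits an a.e.\ convergent subsequence, and closedness of $L$ in $A$ preserves membership pointwise a.e.

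There is no serious obstacle here; the only point requiring any care is, in the forward direction, using "almost every" to actually pick a single $y_0$ on which to read off the identity in $A$, which is why one chooses $g$ supported on a set of positive measure. The converse fails without commutativity precisely because $f\cdot x^{-1}$ and $x\cdot f$ take values $f(yx^{-1})$ and $f(x^{-1}y)$ at potentially different points of $G$, so pointwise membership in $L$ cannot be matched up.
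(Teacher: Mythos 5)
Your argument is correct and follows essentially the same route as the paper: both directions invoke the characterization of \Cref{LieidealsinL1G}, the forward implication by specializing at $x=e$ with a function of the form $\chi_E\,\ell$ supported on a set of finite positive measure, and the converse by using commutativity to collapse $f\cdot x^{-1}$ and $x\cdot f$ so that the required element lies in $\widetilde{L}$ pointwise. The paper leaves the abelian converse as an easy consequence, which you have simply written out in full.
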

\begin{proof}
  Let $l\in L$ and $a\in A$. Then, for any fixed Borel set $E$ with
  finite positive measure, $l\chi_E \in \widetilde{L}$.  Taking $x=e $
  in \Cref{theorem-eqn}, we see that $l\chi_E a - al \chi_E\in
  \widetilde{L}$; so that $la\chi_E(y) - al\chi_E(y) \in L$
  a.e.. Hence, $l a - a l \in L$, which implies that $L$ is a Lie
  ideal in $L$.

When $G$ is abelian, then the converse follows easily using \Cref{LieidealsinL1G}.
\end{proof}

The converse of the preceding proposition is not true in general as is
clear from the following proposition, which also shows that the
commutativity of $G$ has a significant say in telling whether
$\widetilde{L}$ is a Lie ideal in $\A$ or not.

\begin{prop}\label{non-abelian-L-tilde}
Let $G$ be a non-abelian locally compact group, $A$ be a Banach
algebra and $L$ be a closed subspace of $A$. Then, $\widetilde{L}$ is a
Lie ideal in $\A$ if and only if $L$ is an ideal in $A$ if and only if
$\widetilde{L}$ is an ideal in $\A$.
\end{prop}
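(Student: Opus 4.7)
The plan is to establish the chain of implications efficiently: the equivalence of $L$ being an ideal in $A$ with $\widetilde L$ being an ideal in $\A$ is already recorded in Remark~\ref{abelian-L-tilde}, and since every ideal is automatically a Lie ideal, the implication ``$\widetilde L$ is an ideal in $\A$ $\Rightarrow$ $\widetilde L$ is a Lie ideal in $\A$'' is immediate. The entire content of the proposition therefore lies in showing that if $\widetilde L$ is a Lie ideal in $\A$, then $L$ is an ideal in $A$, and this is where non-abelianness of $G$ must be leveraged.

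Fix $l \in L$ and $a \in A$; the target is to conclude $la,\, al \in L$. For any Borel set $E \subseteq G$ of finite positive Haar measure, $l\chi_E \in \widetilde L$, and a direct computation from the definitions of the translates yields $(l\chi_E)\cdot x^{-1} = l\chi_{Ex}$ and $x\cdot(l\chi_E) = l\chi_{xE}$. Feeding $f = l\chi_E$ into the characterization from \Cref{LieidealsinL1G} then shows that the function
\[
y \;\longmapsto\; \Delta(x^{-1})\, la\,\chi_{Ex}(y) \;-\; al\,\chi_{xE}(y)
\]
takes values in $L$ for a.e.\ $y \in G$. Consequently, whenever $Ex \setminus xE$ has positive measure the constant value $\Delta(x^{-1}) la$ assumed there is forced into $L$, yielding $la \in L$ (since $\Delta(x^{-1}) > 0$); similarly, if $xE \setminus Ex$ has positive measure, then $al \in L$.

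It remains to exhibit a single pair $(x_0, E)$ for which both $Ex_0 \setminus x_0 E$ and $x_0 E \setminus E x_0$ have positive Haar measure, independently of $l$ and $a$. Using that $G$ is non-abelian, choose $x_0, y_0 \in G$ with $x_0 y_0 x_0^{-1} \neq y_0$. By Hausdorffness separate these two points by disjoint open sets, and by local compactness shrink to a compact neighborhood $U$ of $y_0$ satisfying $U \cap x_0 U x_0^{-1} = \emptyset$. Setting $E = U$, the identity $x_0 E = (x_0 U x_0^{-1}) x_0$ gives $Ex_0 \cap x_0 E = \emptyset$, while $m(Ex_0) = \Delta(x_0) m(U) > 0$ and $m(x_0 E) = m(U) > 0$ by left invariance. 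The conclusion $la,\, al \in L$ follows, so $L$ is an ideal in $A$. The only mildly delicate point is producing the neighborhood $U$ disjoint from its $x_0$-conjugate, which is handled by separating $y_0$ from $x_0 y_0 x_0^{-1}$ by disjoint open sets and then intersecting with the pullback of one of them under the continuous conjugation map.
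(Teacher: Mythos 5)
Your proposal is correct and follows essentially the same route as the paper's: both reduce, via \Cref{abelian-L-tilde}, to showing that if $\widetilde L$ is a Lie ideal then $L$ is an ideal, and both do so by feeding $f = l\chi_E$ (for a compact neighbourhood $E$ chosen using non-commutativity) into \Cref{LieidealsinL1G} so that the translates $Ex$ and $xE$ are disjoint and the two terms $\Delta(x^{-1})la\chi_{Ex}$ and $al\chi_{xE}$ can be isolated on sets of positive measure. The only difference is cosmetic: the paper argues by contradiction using disjoint neighbourhoods $V_1$ of $xy$ and $V_2$ of $yx$, whereas you argue directly with a single $E$ disjoint from its conjugate $x_0Ex_0^{-1}$, which yields $la\in L$ and $al\in L$ simultaneously.
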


\begin{proof}
 In view of \Cref{abelian-L-tilde}, we only need to prove necessity in
 the first equivalence. Suppose that $\widetilde{L}$ is a Lie ideal in
 $\A$. By \Cref{L-abelian}, $L$ must be a Lie ideal in $A$. Now,
 suppose on contrary that $L$ is not an ideal in $A$. Then, there
 exist $a \in A$ and $l \in L$ such that $al \notin L$ (or, $la \notin
 L$). Also, $G$ being non-abelian, there exist $x, y \in G$ such that
 $xy \neq yx$.  Let $V_1$ and $V_2$ be disjoint compact neighbourhoods
 of $xy$ and $yx$ respectively.  Since $x^{-1}V_1$ and $V_2 x^{-1}$
 are neighbourhoods of $y$, so is $V = x^{-1}V_1 \cap V_2
 x^{-1}$. Then, taking $f =l\, \chi_{V_1} \in \widetilde{L}$ (or, $f
 =l\, \chi_{V_2}$), we observe that for every $z \in V$,
	\begin{eqnarray*}
	\big( \Delta(x^{-1})a (x^{-1}\cdot f)- (f\cdot x) a  \big) (z)
        &       = & \Delta(x^{-1})a f(xz) - f(zx)a\\
      &  = & \Delta(x^{-1})  al
        \chi_{V_1}(xz) - la \chi_{V_1}(zx)\\
        &  = & \Delta(x^{-1}) al \quad (\text{or}, -la)\\
	& \notin&  L,
        \end{eqnarray*}
which gives a contradiction by \Cref{LieidealsinL1G}, as $m(V) >0$.
\end{proof}
It now remains only to answer the question whether every closed Lie
ideal in $\A$ is of the form $\widetilde{L}$ or not. It turns out that
it is not the case.  Recall that a topological group $G$ is said to be
an ${\bf [IN]}$ group if $G$ contains a compact neighbourhood of
identity invariant under inner automorphisms. Clearly, every locally
compact abelian group is an ${\bf [IN]}$ group and every ${\bf [IN]}$
group is unimodular.

\begin{prop}\label{IN-gps}
Let $G$ be a non-trivial ${\bf [IN]}$ group and $A$ be a \BA \ with
non-trivial center.  Then, there exists a closed Lie ideal in $\A$
which is not of the form $\widetilde{L}$ for any closed Lie ideal $L$
in $A$.
\end{prop}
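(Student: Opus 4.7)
The plan is to construct a concrete, very small closed Lie ideal of $\A$ whose elements all share essentially the same ``shape'', and then to argue that such rigidity cannot be enforced by pointwise membership in a closed Lie ideal of $A$.

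Since $G$ is ${\bf [IN]}$, $\Z(L^1(G))$ is non-trivial by the result of Mosak cited in the introduction; pick a non-zero $f_0 \in \Z(L^1(G))$ and, by hypothesis, a non-zero $z \in \Z(A)$. Set $\xi := f_0 z \in \A$, so $\xi(x) = f_0(x) z$; clearly $\xi \neq 0$ since $f_0 \neq 0$ and $z \neq 0$. I first claim $\xi \in \Z(\A)$. By density of simple tensors (\Cref{L1GA}) and continuity of convolution, it is enough to check $\xi \ast (ga) = (ga) \ast \xi$ for $g \in L^1(G)$ and $a \in A$; direct computation gives both sides equal to $(f_0 \ast g)(x)\, az = (g \ast f_0)(x)\, az$, using centrality of $f_0$ in $L^1(G)$ and of $z$ in $A$. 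Hence $\C \xi$ is a one-dimensional, in particular closed, Lie ideal of $\A$.

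Suppose for contradiction that $\C \xi = \widetilde{L}$ for some closed Lie ideal $L$ of $A$. The case $L = (0)$ is immediate since $\xi \neq 0$. Otherwise, pick $l \in L \setminus \{0\}$ and, using that $G$ is non-trivial, Hausdorff and locally compact, choose disjoint Borel sets $E_1, E_2 \seq G$ with $0 < m(E_i) < \infty$ for $i = 1,2$ (separate two distinct points by disjoint open neighbourhoods and shrink to compact neighbourhoods inside them). Then each $l\chi_{E_i}$ takes values in $L$ pointwise, so it lies in $\widetilde{L} = \C \xi$; write $l\chi_{E_i} = c_i \xi$ in $\A$ for some $c_i \in \C$. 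Evaluating at $x \in E_i$ gives $l = c_i f_0(x) z$, which forces $c_i \neq 0$; evaluating at $x \notin E_i$ gives $c_i f_0(x) z = 0$, and since $c_i \neq 0$ and $z \neq 0$, this yields $f_0(x) = 0$ for almost every $x \notin E_i$. Combining the two conclusions and using $E_1 \cap E_2 = \emptyset$ gives $f_0 = 0$ almost everywhere on $G$, contradicting $f_0 \neq 0$ in $L^1(G)$.

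The argument is essentially elementary once the construction $\xi = f_0 z$ is made; the only modelling decision is the choice of $\xi$, and the only measure-theoretic point requiring a moment's thought is the production of the disjoint sets $E_1, E_2$, which however uses nothing beyond the standing hypotheses on $G$.
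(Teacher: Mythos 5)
Your proof is correct and follows essentially the same strategy as the paper: the paper takes $\xi = a\chi_V$ for $0\neq a \in \Z(A)$ and $V$ a compact invariant neighbourhood of $e$ (i.e., your construction with $f_0 = \chi_V$, except that the Lie ideal property of $\mathrm{span}\{\xi\}$ is verified through the criterion of \Cref{LieidealsinL1G} rather than through membership in $\Z(\A)$), and then derives the contradiction by exhibiting $a\chi_U \in \widetilde{L}\setminus\mathrm{span}\{\xi\}$ for a finite-measure set $U$ with $m(U\,{\bf \Delta}\,V)\neq 0$, which is the same pointwise-rigidity argument as your two-disjoint-sets computation. The only substantive difference is that you invoke Mosak's theorem to produce a nonzero $f_0\in\Z(L^1(G))$, whereas the paper stays self-contained by working directly with the invariant neighbourhood; both routes are sound.
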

\begin{proof}
Let $0\neq a \in \mcal{Z}(A)$, $V$ be a compact invariant
neighbourhood of $e \in G$ and $f := a\chi_{V} \in \A$.  Then,
$(f\cdot x)a'-a'(x^{-1}\cdot f) =0$ for all $x \in G$ and $ a' \in A$.
Thus, by \Cref{LieidealsinL1G}, $\text{span}\{f\}$ is a closed Lie
ideal in $\A$.  We assert that $\text{span}\{f\}$ is not of the form
$\widetilde{L}$ for any closed Lie ideal $L$ in $A$.

Suppose, on contrary, that there exists a closed Lie ideal $L$ in $ A$
such that $\text{span}\{f\} = \widetilde{L}$. Clearly $a \in L$.  If
we can choose a Borel set $U $ of finite measure such that $m(U
{\bf \Delta} V) \neq 0$, then for $h:=a\chi_{U} \in \A$, we easily see
that $h \in \widetilde{L}$ whereas $h \notin \text{span}\{f\}$, which
gives a contradiction.

So, it just remains to show that such a choice of $U$ is possible. If
$V = \{ e\}$ (equivalently, $G$ is discrete), then for any $e \neq y
\in G$, $U$ can be taken as any compact neighbourhood of $y$ not
intersecting $V$. On the other hand if $V \neq \{e\}$, fix an $e \neq
x \in V$, and then take any two disjoint neighbourhoods $V_1$ and
$V_2$ of $e$ and $x$, respectively, inside $V$. One may take $U$ as $
V_1$ or $V_2$.
\end{proof}

\section{Center of a  generalized group algebra}

We first recall certain classes of topological groups that we will come
across while discussing center of generalized group algebras (in fact,
we already met one such class in \Cref{IN-gps}). Recall that, for any
topological group $G$,
\[
\mathrm{Aut}(G):=\{\beta: G \ra G: \beta \text{ is a bi-continuous group isomorphism}\}
\]
is a Hausdorff topological group (not necessarily locally compact)
with respect to the so called {\em Birkhoff topology}.  A basis for
the neighbourhoods of the identity automorphism $I$ in this topology
is given by the collection
\[
\{ N(K, V) : K \subseteq G \text{ is compact and }
V \subseteq G \text{ is a   neighbourhood of } e\},
  \]
   where \( N(K, V):= \{ \beta \in \mathrm{Aut}(G) : \beta(x),
   \beta^{-1}(x) \in Vx \text{ for all } x \in K\}.  \) See
   \cite{braconnier} and \cite{peters} for details. A topological
   group $G$ is said to be an
\begin{enumerate}
\item  ${\bf [IN]}$ group if $G$ contains a compact neighbourhood
  $U$ of the identity of $G$ which is invariant under inner automorphisms,
  i.e., $gUg^{-1} = U$ for all $g \in G$.
\item  ${\bf [SIN]}$ group if every neighbourhood of identity
  contains a compact neighbourhood of identity which is invariant
  under inner automorphisms.
  \item ${\bf [FIA]}^{-}$ group if $\mathrm{Inn}(G)$, the group of
    inner automorhphisms of $G$, is relatively compact in
    $\mathrm{Aut}(G)$ with respect to the Birkhoff topology.
    \item  ${\bf [FC]}^{-}$ group if the conjugacy classes of $G$
      are all relatively compact.
\end{enumerate}
A detailed analysis of above classes of topological groups is available in
\cite{grosser} (also see \cite{braconnier, peters, palmerpaper}).

\begin{remark}\label{types-of-groups}
  \begin{enumerate}
\item A topological group is an ${\bf [FIA]}^{-}$
  group if and only if it is an ${\bf [FC]}^{-}$  as well as an
  ${\bf [SIN]}$ group - see \cite{palmerpaper}.
\item  Birkhoff topology is known to be finer than the topology of uniform
 convergence on compacta. In particular, this implies that the
 evaluation map $\mathrm{Aut}(G) \times G \ra G$ is continuous  - see \cite{braconnier, peters}. 
  \end{enumerate}
\end{remark}

We now proceed to examine the elements of the center of a  generalized
group algebra.

In 1972, Mosak (\cite[Proposition 1.2]{mosak1}) proved  that for a
locally compact group $G$, the center of its group algebra is given by
\[
\mcal{Z}(L^1(G)) = \{ f \in L^1(G) :
\Delta(x^{-1})(f \cdot x^{-1}) = (x\cdot f), \ \forall \ x \in G\}.
\]
Motivated by this, we obtain a similar realization of the center of a
generalized group algebra by employing some of the techniques used in
\Cref{LieidealsinL1G}.  Note that $\mcal{Z}(\A)$ is a closed Lie
ideal in $\A$.
\begin{prop}\label{centre1}
	Let $G$ be a locally compact group and $A$ be
	a \BA. Then,
	\[
	\mcal{Z}(\A) = \{ f \in \A : \Delta(x^{-1})(f\cdot x^{-1})a =
	a(x \cdot f), \forall \ x \in G, a \in A \}.
	\]
\end{prop}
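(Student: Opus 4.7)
My plan is to adapt both halves of the argument from \Cref{LieidealsinL1G} almost verbatim, exploiting the fact that the approximation identities appearing there produce, in the limit, precisely the expression $\Delta(x^{-1})(f\cdot x^{-1})a - a(x\cdot f)$ and that commutators $f\ast\varphi - \varphi\ast f$ can be written as Riemann-like sums of such expressions.

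For necessity, I would fix $f \in \mcal{Z}(\A)$, $x \in G$ and $a \in A$, and argue that
\[
\Delta(x^{-1})(f\cdot x^{-1})a - a(x\cdot f) = 0 \quad \text{in } \A,
\]
by showing that it is the $\|\cdot\|_1$-limit of quantities that are identically zero. Concretely, for each compact symmetric neighbourhood $V$ of $e$, centrality of $f$ gives $f \ast (a_{xV}) - (a_{xV}) \ast f = 0$, where $a_{xV} = a\chi_{xV}$. The chain of estimates in the first half of the proof of \Cref{LieidealsinL1G} (the one bounded by the quantity $(*)$) shows that
\[
\left\|\Delta(x^{-1})(f\cdot x^{-1})a - a(x\cdot f) - \tfrac{1}{m(V)}\bigl(f\ast(a_{xV}) - (a_{xV})\ast f\bigr)\right\|_1
\]
can be made arbitrarily small by choosing $V$ small, using continuity of $s \mapsto s\cdot f$ and $s \mapsto f\cdot s$ from \Cref{cont}. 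Since the second term inside the norm vanishes, the first must be zero.

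For sufficiency, I would assume $\Delta(x^{-1})(f\cdot x^{-1})a = a(x\cdot f)$ for every $x \in G$ and $a \in A$, and show that $[f,\varphi]=0$ for every $\varphi \in C_c(G,A)$; density of $C_c(G,A)$ in $\A$ will then complete the proof. Given $\varphi \in C_c(G,A)$ supported in a compact set $F$, set $K = F \cup F^{-1}$ and, exactly as in the second half of the proof of \Cref{LieidealsinL1G}, choose a fine enough mutually disjoint Borel partition $\{K_1,\dots,K_r\}$ of $K$ with sample points $y_i \in K_i$ so that the same computation yields
\[
\Bigl\|\varphi\ast f - f\ast\varphi - \sum_{i=1}^r m(K_i)\bigl(\varphi(y_i)(y_i\cdot f) - \Delta(y_i^{-1})(f\cdot y_i^{-1})\varphi(y_i)\bigr)\Bigr\|_1 \le \varepsilon.
\]
Applying the hypothesis with $x = y_i$ and $a = \varphi(y_i)$ shows that each summand vanishes, so $\|\varphi\ast f - f\ast\varphi\|_1 \le \varepsilon$, and letting $\varepsilon \to 0$ gives centrality.

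There is no genuine obstacle here: both implications are essentially free corollaries of the two chains of estimates already established in \Cref{LieidealsinL1G}. The only mild care required is to make sure that when specialising those estimates, the output gets replaced by a genuine equality in $\A$ (rather than membership in a Lie ideal), which is immediate once both sides of the inequalities are driven to zero in $L^1$ norm.
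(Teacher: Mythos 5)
Your proposal is correct and follows exactly the paper's argument: the paper likewise deduces necessity by noting that the estimate from the first half of \Cref{LieidealsinL1G} forces $\Delta(x^{-1})(f\cdot x^{-1})a - a(x\cdot f)$ to be the $\|\cdot\|_1$-limit of the vanishing commutators $\frac{1}{m(V)}(f\ast a_{xV} - a_{xV}\ast f)$, and sufficiency by observing that the Riemann-sum approximation of $\varphi\ast f - f\ast\varphi$ has all its summands killed by the hypothesis. No differences worth noting.
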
 

\begin{proof}
	Let $f \in \mcal{Z}(\A), x\in G$ and $a \in A$, then as proved
        in \Cref{LieidealsinL1G}, for $\epsilon >0$ there exists a
        compact neighbourhood $V$ of $e$ such that
	\[
	\mathsmaller{\|} \Delta(x^{-1})(f\cdot x^{-1})a - a(x\cdot f ) -
	\mathsmaller{\frac{1}{m(V)}} ( f \ast (a_{xV}) - (a_{xV}) \ast f )
	\mathsmaller{\|}_1 \leq \epsilon.
	\]
	The required equality follows easily from the fact that $f
        \ast (a_{xV}) = (a_{xV}) \ast f $.  For the reverse inclusion,
        let $f \in \A$ be such that $\Delta(x^{-1})(f\cdot x^{-1})a =
        a(x\cdot f )$ for every $x \in G, \ a\in A$.  It is enough to
        show that $f \ast \varphi = \varphi \ast f$ for every $\varphi
        \in C_c(G,A)$. Let $\varphi \in C_c(G,A)$. Then, for given
        $\epsilon >0$, as proved in \Cref{LieidealsinL1G}, we have
	\[
	\mathsmaller{\|} \varphi \ast f - f \ast \varphi -
	\mathsmaller{\sum}_{i} m(K_i)(\varphi(y_i)(y_i \cdot f) -
	\Delta(y_i^{-1}) ( f \cdot y_i^{-1}) \varphi(y_i) )
	\mathsmaller{\|}_1 \leq \epsilon,
	\]
	where $K_i$ and $y_i$'s are as mentioned in the proof of
	\Cref{LieidealsinL1G}. Hence the result.
\end{proof}

It is also known that for any unimodular locally compact group $G$, $f
\in L^1(G)$ is central if and only if $f$ is constant on the conjugacy
classes of $G$ (see \cite{losert}).  However, such a characterization
does not hold for $\A$, in general. For example, if $G = D_6 :=\langle
r, s : r^3 = 1, s^2 = 1, srs = r^{-1}\rangle$ and $A$ is a Banach
algebra endowed with the trivial multiplication, i.e., $ab = 0$ for
every $a,b \in A$, then $\Z(L^1(D_6, A))= L^1(D_6, A)$ as $f \ast g =
0$ for every $f , g \in L^1(D_6, A)$.  And, one may define $f\in
L^1(D_6, A)$ such that $f(s) = c$ and is zero otherwise, for some
$0\neq c \in A$; then, $f(rsr^{-1}) = f(rsr^2) = f(r^2s) = 0 \neq c=
f(s) $. Thus, $f$ is central but not constant on conjugacy classes.

	Even the converse is not true in general, i.e., functions
        which are constant on the conjugacy classes need not be in
        $\mcal{Z}(\A)$.  For example, every function in $ L^1(\R,
        M_2)$ is constant on conjugacy classes. Define $f(x)
        = \begin{bmatrix} 1& 0\\ 0& 0 \end{bmatrix}$ when $x \in
        [0,1]$ and zero otherwise. Then,  for $x = 1$, $a
        = \begin{bmatrix} 1& 1\\ 0& 0 \end{bmatrix}$ and for any $y \in [1,2]$,
        we have
        \[
        (f \cdot x^{-1} )a (y) = \begin{bmatrix} 1& 1\\ 0&
          0 \end{bmatrix} \neq \begin{bmatrix} 1& 0\\ 0&
          0 \end{bmatrix} = a(x \cdot f) (y);
        \]
        thus, $f \notin \Z(L^1(\R, M_2))$, by \Cref{centre1}.\smallskip
        
       Interestingly, when $G$ is  ${\bf [SIN]}$  and $A$ is
       unital, we can have a satisfying identification of the elements
       of the $\Z(\A)$. We need some preparation for this.  Our first
       step in this direction is to prove that the elements of
       $\Z(\A)$ are all $\Z(A)$-valued.

\begin{lemma}\label{centervaluedfunctionsincenter}
Let $G$ be a locally compact group and $A$ be a
Banach algebra. Then,
\[
\Z(\A) \subseteq L^1(G, \Z(A)).
\]
\end{lemma}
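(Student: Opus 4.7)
The plan is to apply \Cref{centre1} with $x = e$ and then transfer the resulting Bochner-$L^1$ identity to a pointwise statement about values of $f$. Taking $x = e$ makes $\Delta(e^{-1}) = 1$ and trivializes the translation actions, since $e \cdot f = f \cdot e^{-1} = f$; the centrality characterization thus collapses to $fa = af$ in $\A$ for every $a \in A$. Unpacking this in Bochner-$L^1$, $\int_G \|f(y)a - af(y)\|\, dy = 0$, so for each fixed $a \in A$ there is a measurable null set $N_a \subseteq G$ such that $f(y)a = af(y)$ for every $y \in G \setminus N_a$.

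The substantive step is to promote this ``for each $a$, almost every $y$'' assertion to ``almost every $y$, for every $a \in A$'', which is precisely the statement $f(y) \in \Z(A)$ for a.e.\ $y$. For this I would use Pettis' measurability theorem: since $f$ is Bochner integrable it is essentially separably valued, so there is a null set $N_0 \subseteq G$ and a separable closed subspace $A_1 \subseteq A$ with $f(y) \in A_1$ for $y \notin N_0$. Choosing a countable dense subset $\{a_n\}_{n \geq 1}$ of $A$ and setting $N := N_0 \cup \bigcup_n N_{a_n}$ yields a single null set off which $f(y)$ commutes with every $a_n$, and therefore, by continuity of multiplication in $A$, with every $a \in A$; this gives $f(y) \in \Z(A)$ for a.e.\ $y$, which is exactly $f \in L^1(G, \Z(A))$.

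The main obstacle is the passage from the countable test family $\{a_n\}$ to all of $A$ when $A$ is not separable, since the uncountable union $\bigcup_{a \in A} N_a$ can fail to be null. The natural remedy is to exploit that the commutator map $[f(y), \cdot]\colon A \to A$ is continuous and linear, so vanishing on a dense subset of $A$ forces vanishing on all of $A$; combined with the fact (again from Pettis) that one may reduce all measurability considerations to the separable subalgebra generated by the essential range of $f$ together with a suitable countable probing set. Verifying that this reduction is always legitimate in the generality of arbitrary Banach algebras is the one step I would want to check carefully before finalizing the argument.
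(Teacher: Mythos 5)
Your first step matches the paper's: taking $x=e$ in \Cref{centre1} gives $fa=af$ in $L^1(G,A)$, hence for each fixed $a\in A$ a null set $N_a$ off which $f(y)a=af(y)$. The gap is exactly the one you flag in your last sentence, and your proposed remedy does not close it. Pettis' theorem gives separability of the space in which the \emph{values} $f(y)$ live, but the quantifier you need to tame ranges over the \emph{test elements} $a\in A$, and these are different spaces: knowing that $f(y)$ commutes with every element of a countable set $D\subseteq A$ only places $f(y)$ in the commutant of $D$, which (when $A$ is non-separable, so that $D$ cannot be dense) is in general strictly larger than $\Z(A)$. No countable ``probing set'' with commutant equal to $\Z(A)$ need exist: for $A=B(H)$ with $H$ non-separable, the commutant of any countable subset properly contains $\C 1=\Z(A)$, since a separable subalgebra cannot act irreducibly on a non-separable Hilbert space. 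So the uncountable union $\bigcup_{a\in A}N_a$ is genuinely out of control, and as written your argument proves the lemma only for separable $A$.

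The paper's proof is built precisely to dodge this. From $fa=af$ in $L^1$ one gets, for \emph{every} Borel set $F$ of finite measure and \emph{every} $a\in A$ simultaneously, the exact identity $\bigl(\int_F f\,dm\bigr)a=\int_F fa\,dm=\int_F af\,dm=a\bigl(\int_F f\,dm\bigr)$: the quantifier over $a$ is applied to the single element $\int_F f\,dm$ of $A$, so no null sets accumulate, and one concludes $\int_F f\,dm\in\Z(A)$ for all $F$. The localization back to the pointwise values of $f$ is then done by vector-measure differentiation: $H(F)=\int_F f\,dm$ is an $m$-continuous vector measure of bounded variation with values in the closed subspace $\Z(A)$, so by \cite[Corollary III.2.5]{diestel} it has a density $g\in L^1(E,\Z(A))$, and $f=g$ a.e.\ by uniqueness of densities. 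To salvage your line of attack you would need to replace the ``countable dense subset of $A$'' step by some such integrate-first argument; without it, the proof has a genuine hole for non-separable $A$.
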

\begin{proof}
Consider a non-zero element $f$  in $\mcal{Z}(\A)$.  Suppose there exists a
positive measure Borel set $E'$ in $G$ such that $f(x) \notin \Z(A)$ for every
$x \in E'$. Choose a measurable subset $E$ of $E'$ of finite and
positive measure.

By \cref{centre1} we obtain $f a = a f$ for every $a \in A$.  Let
$B_{E}$ denote the $\sigma$-algebra consisting of all Borel sets
contained in $E$.  Then, for any $F \in B_{E}$, we find that
\[
\left( \int_F f(x)\, dx\right)\, a = \int_F f(x)a\, dx = \int_F
(fa)(x) dx = \int_F (af)(x)\, dx = a\,\left(\int_F f(x)\, dx\right)
\]
for all $a\in A$, i.e., $\int_F f dx \in \Z(A)$ for all $F \in
B_E$.  Define $H: B_E \rightarrow \Z(A)$ by $H(F)
= \int_F f dx$.  Then, $H$ is an $m$-continuous (i.e., $\lim_{m(F) \to
  0}H(F) = 0$) vector measure of bounded variation (\cite[Theorem
  II.2.4]{diestel}). Thus, by \cite[Corollary III.2.5]{diestel}, there
exists a $g \in L^1(E,\Z(A))$ such that $H(F) = \int_F g(x) dx$ for
all $F \in B_E$.  This shows that $\int_F (f(x) - g(x)) dx =
0$ for every $F \in B_{E}$; so that, $f = g$ a.e. on $E$, by
\cite[Corollary II.2.5]{diestel}.  Since $g(E) \subseteq \Z(A)$, this
is a contradiction to the existence of $E'$.  Hence, $f(x) \in \Z(A)$
for almost every $x \in G$.
\end{proof}

Before looking at the elements of $\mcal{Z}(L^1(G,A))$, one needs to
worry when is it non-trivial. It is known that $\mcal{Z}(L^1(G))$ is non-trivial if
and only if $G$ is an ${\bf[IN]}$ group (\cite{mosak}). Adapting the
techniques of \cite{mosak}, we now provide its analogue for
generalized group algebras.
\begin{lemma} \label{centre}
	Let $G$ be a locally compact group and $A$ be
	a unital \BA. Then, $\mcal{Z}(\A) \neq (0)$ if and only if
	$G$ is an ${\bf[IN]}$ group.
\end{lemma}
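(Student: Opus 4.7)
The plan is to prove both directions using \Cref{centre1} and reducing the necessity to the classical case of $L^1(G)$.

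For sufficiency, assume $G$ is an ${\bf[IN]}$ group and let $V$ be a compact neighbourhood of $e$ with $xVx^{-1}=V$ for all $x \in G$ (equivalently, $xV=Vx$). Since every ${\bf[IN]}$ group is unimodular, $\Delta \equiv 1$. I would take $f := 1_A\chi_V \in \A$, which is non-zero. Then for each $x \in G$ and $a \in A$, direct computation gives $\big(\Delta(x^{-1})(f\cdot x^{-1})a\big)(y) = a\chi_{Vx}(y) = a\chi_{xV}(y) = \big(a(x\cdot f)\big)(y)$ for all $y \in G$, using $xV=Vx$ and that $1_A$ is the unit. By \Cref{centre1}, $f \in \mcal{Z}(\A)$, so the center is non-trivial.

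For necessity, suppose $0 \neq f \in \mcal{Z}(\A)$. The strategy is to produce a non-zero central element of $L^1(G)$, so that the scalar result of Mosak (\cite{mosak}) applies. First, applying \Cref{centre1} with $a = 1_A$ (available since $A$ is unital) yields the equality
\[
\Delta(x^{-1})(f\cdot x^{-1}) = (x\cdot f) \quad \text{in } \A \text{ for every } x \in G.
\]
Next, I would find a bounded linear functional $\phi \in A^*$ such that $g := \phi \circ f$ is non-zero in $L^1(G)$. Since $f \neq 0$, the $L^1$ Bochner norm $\int_G \|f(x)\|\, dx$ is positive; so by a standard argument involving \cite[Corollary II.2.5]{diestel}, there exists a Borel set $E$ of finite positive measure with $a_0 := \int_E f(x)\, dx \neq 0$ in $A$. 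Hahn--Banach then provides $\phi \in A^*$ with $\phi(a_0) \neq 0$, and consequently $\int_E g(x)\,dx = \phi(a_0) \neq 0$, so $g \not\equiv 0$.

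Finally, applying $\phi$ pointwise to the displayed equation above and using that $\phi$ commutes with translations and with the Bochner integral, I would obtain $\Delta(x^{-1})(g \cdot x^{-1}) = x\cdot g$ in $L^1(G)$ for every $x\in G$, which by \Cref{centre1} (or equivalently by Mosak's formula) means $g \in \mcal{Z}(L^1(G))\setminus\{0\}$. Hence by \cite[Theorem 2.5]{mosak} (the scalar analogue of this lemma), $G$ must be an ${\bf[IN]}$ group. The only mildly delicate step is the construction of a separating functional $\phi$, but this is routine once one realizes that the non-vanishing of $f$ in $\A$ implies the non-vanishing of some Bochner integral over a set of finite positive measure.
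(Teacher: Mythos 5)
Your proof is correct, but the necessity direction follows a genuinely different route from the paper's. For sufficiency both arguments are essentially the same: the paper tensors a non-zero element of $\mcal{Z}(L^1(G))$ with $1_A$, while you write down the explicit witness $1_A\chi_V$ and verify the criterion of \Cref{centre1} directly; these are two phrasings of one idea. For necessity, the paper adapts Mosak's original technique: from $0\neq f\in\mcal{Z}(\A)$ it forms $h(x)=\|f(x)\|^{1/2}\in L^2(G)$, shows via \Cref{centre1} (with $a=1_A$) that $h(txt^{-1})=\Delta(t)^{1/2}h(x)$ a.e., and then uses the $C_0$ function $p(s)=\int_G h(sy)h(y)\,dy$ to exhibit a compact conjugation-invariant neighbourhood of $e$ as a sublevel set, so the ${\bf[IN]}$ property is produced directly. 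You instead reduce to the scalar theorem: choose, via \cite[Corollary II.2.5]{diestel} and Hahn--Banach, a functional $\phi\in A^*$ with $g:=\phi\circ f\neq 0$ in $L^1(G)$, push the identity $\Delta(x^{-1})(f\cdot x^{-1})=x\cdot f$ through $\phi$ to conclude $g\in\mcal{Z}(L^1(G))\setminus\{0\}$, and then invoke Mosak's non-triviality theorem as a black box. Your reduction is sound (strong measurability and integrability of $\phi\circ f$, the a.e.\ translation identities, and the separation step are all routine) and is arguably shorter and more modular; the paper's argument is self-contained in the sense that it re-derives the compact invariant neighbourhood rather than quoting the scalar result, at the cost of the $L^2$/positive-definite-function machinery. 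The only cosmetic issue in your write-up is the theorem number in the citation of \cite{mosak}, which you should verify against the actual paper.
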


\begin{proof}
	For $0 \neq f \in \mcal{Z}(\A)$, the function $h (x) = \| f(x)
	\|^{1/2} \in L^2(G)$ is positive. So, the function $p: G \ra \C$
	defined as $p(s) = \int_G h(sy)h(y) dy$ belongs to $C_0(G)$ (see
	\cite[Theorem 20.16]{hewittross}). Also, taking $a=1_A$ (the unit of
	$A$) in \cref{centre1}, one can easily verify that for every $t\in G$,
	$h(txt^{-1}) = \Delta(t)^{1/2}h(x)$, for almost every $x \in G$ (where
	the null set depends on $t$).
	Thus, $p$ is invariant
		under inner automorphisms and $p(e) = \|f \|_1 >0$. Then, for any
	$\epsilon$ such that $p(e) > \epsilon > 0$, the set $\{x \in G: p(x)
	\geq \epsilon \}$ is a compact neighbourhood of  $e$ in $G$, which is
	invariant under inner automorphisms. Hence, $G$ is an ${\bf[IN]}$-group.

 Conversely, if $G$ is an ${\bf [IN]}$ group, then
 $\mathcal{Z}\big(L^1(G)\big)$ is non-trivial and
\(
(0) \neq
\mathcal{Z}\big( L^1(G)\big) \otimes \C 1 \subseteq
\mathcal{Z}\big(L^1(G)\obp A \big).\)
\end{proof}

Before identifying the elements of $\mathcal{Z}(\A)$, for the sake of
clarity, in addition to the first paragraph of Section
2, we recall few more terminologies and facts related to Bochner
integrability of vector valued functions. Consider a measure space
$(\Omega, \mathcal{M}, \mu)$, a Banach space $X$ and a function $f:
\Omega \ra X$. Then, $f$ is said to be
\begin{enumerate}
\item Borel $(\mathcal{M},
\mu)$-measurable if $f^{-1}(U) \in \mathcal{M}$ for every open subset
$U$ of $X$.
\item $(\mathcal{M}, \mu)$-essentially separably valued if
there exists an $E \in \mathcal{M}$ such that $\mu(E^c) = 0$ and
$f(E)$ is contained in a separable (closed) subspace of $X$.
\end{enumerate}
A proof of the following useful equivalence can be
found, for instance, in \cite[Proposition 2.15]{ryan}.

\begin{prop}\label{equivalence}
  Let $(\Omega, \mathcal{M}, \mu)$ be a measure space, $X$ be a Banach
  space and $f: \Omega \ra X$ be a function supported on a
  $\sigma$-finite set.  Then, $f$ is $(\mathcal{M}, \mu)$-measurable
  if and only if $f$ is Borel $(\mathcal{M}, \mu)$-measurable and
  $(\mathcal{M}, \mu)$-essentially separably valued.
\end{prop}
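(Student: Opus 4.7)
The plan is to recognise this as the Pettis measurability theorem and prove the equivalence in the usual two steps. The forward direction extracts essential separability from the countable range of the approximating simple functions, while the converse constructs an approximating sequence via nearest-point selection from a countable dense set.

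For the necessity, I would pick a sequence $\{s_n\}$ of simple $(\mathcal{M}, \mu)$-measurable functions with $s_n \to f$ almost everywhere, say on a set $E \in \mathcal{M}$ with $\mu(E^c) = 0$. Setting $Y$ to be the closed linear span of the countable set $\bigcup_n s_n(\Omega)$, which is a separable closed subspace of $X$, I would argue that $f(x) = \lim_n s_n(x) \in \ol{Y} = Y$ for every $x \in E$, giving essential separability. For Borel measurability, I would observe that each $s_n$ is Borel measurable because $s_n^{-1}(U)$ is a finite union of members of $\mathcal{M}$ for every open $U \seq X$, and then invoke the standard fact that a pointwise a.e.\ limit of Borel measurable functions into a metric space is Borel measurable (after redefining $f$ to be $0$ on the measurable null set $E^c$, a modification that is legitimate in the Bochner framework).

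For the sufficiency, I would use essential separability to fix $E \in \mathcal{M}$ of full measure with $f(E)$ contained in a separable closed subspace $Y \seq X$, and choose a countable dense sequence $\{y_k\}_{k\geq 1}$ in $Y$ with $y_1 = 0$. For each $n$ and each $x \in E$, I would let $k(x, n)$ be the smallest index in $\{1, \ldots, n\}$ minimising $\|f(x) - y_k\|$ and set $s_n(x) := y_{k(x,n)}$ on $E$ and $s_n(x) := 0$ off $E$. Each $s_n$ then takes only finitely many values in $\{y_1, \ldots, y_n\}$, and density of $\{y_k\}$ in $Y$ forces $s_n(x) \to f(x)$ for every $x \in E$, hence almost everywhere.

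The principal obstacle will be verifying that each $s_n$ is genuinely simple $(\mathcal{M}, \mu)$-measurable, i.e., that every level set $s_n^{-1}(y_k)$ lies in $\mathcal{M}$. This hinges on expressing the nearest-index selection as a finite Boolean combination of sublevel sets of the scalar maps $x \mapsto \|f(x) - y_k\|$, each of which is $\mathcal{M}$-measurable by Borel measurability of $f$ combined with continuity of the norm. The hypothesis that $\mathrm{supp}(f)$ is $\sigma$-finite plays a background role: it aligns the paper's definition of $(\mathcal{M}, \mu)$-measurability with the classical Pettis framework and guarantees that null-set modifications and the exhaustion by finite-measure pieces behave correctly when assembling the approximating sequence.
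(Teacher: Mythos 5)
The paper does not actually prove this proposition: it is the Pettis measurability theorem, and the authors simply cite \cite[Proposition 2.15]{ryan} for it, so there is no in-house argument to compare yours against. Your reconstruction is the standard textbook proof and is essentially sound. The necessity half (take $Y$ to be the closed span of the countable set $\bigcup_n s_n(\Omega)$, then use stability of Borel measurability under pointwise limits of metric-space-valued functions) and the sufficiency half (nearest-point selection from a countable dense subset $\{y_k\}$ of $Y$, with the level sets of $s_n$ written as finite Boolean combinations of sets of the form $\{x : \|f(x)-y_j\| \le \|f(x)-y_k\|\}$, each in $\mathcal{M}$ because $x \mapsto \|f(x)-y_k\|$ is measurable) are exactly the two halves of the usual argument, and you correctly isolate the level-set measurability as the step needing verification. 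One caveat: in the necessity direction your argument really establishes Borel measurability of the function obtained by redefining $f$ to be $0$ on the null set $E^c$, not of $f$ itself; if $\mu$ is not complete these can differ (take $f = \chi_A$ for a non-measurable subset $A$ of a measurable null set: it is an a.e.\ limit of zero functions yet not Borel measurable). This defect is inherited from the statement as phrased and is harmless for the paper's applications, but your parenthetical ``legitimate in the Bochner framework'' is doing more work than it admits. Finally, note that with the paper's definition of simple measurable functions (no finite-measure-support requirement) the $\sigma$-finiteness hypothesis is not actually invoked in either direction; as you observe, it is present to align the statement with the conventions of the cited source.
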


Let $B_{\mathrm{inv}}:=\{B \in B_G: xBx^{-1} = B\ \text{for all } x \in
G\}$. Clearly, $B_{\mathrm{inv}}$ is a $\sigma$-subalgebra of $B_G$. Let
$L^1_{\mathrm{inv}}(G,A)$ denote the corresponding generalized group algebra
with repsect to the left Haar measure $m$ (as above).

\begin{lemma}\label{L-inv}
  Let $G$ be a locally compact group and $A$ be a Banach algebra.
\begin{enumerate}
\item If $f: G \ra A$ is a function with $\sigma$-finite support
  ({with respect to $B_{\mathrm{inv}}$}), then $f$ is $(B_{\mathrm{inv}},m)$-measurable
  if and only if $f$ is $(B_G, m)$-measurable and is constant on the
  conjugacy classes of $G$.
\item  $L^1_{\mathrm{inv}}(G,A) \subseteq \A$ as a closed subspace.  
\end{enumerate}
In particular,
\[
L^1_{\mathrm{inv}}(G,A) =\{f \in \A:  f\ \text{is constant on the conjugacy
  classes of $G$}\}.
\]
\end{lemma}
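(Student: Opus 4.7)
My plan is to prove (1) directly, deduce (2) from general Bochner theory applied to the measure space $(G, B_{\mathrm{inv}}, m|_{B_{\mathrm{inv}}})$, and then combine them for the concluding identification.

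For the forward implication in (1), since $B_{\mathrm{inv}} \subseteq B_G$, every $(B_{\mathrm{inv}},m)$-simple function is $(B_G,m)$-simple, so $(B_{\mathrm{inv}},m)$-measurability of $f$ yields $(B_G,m)$-measurability. Moreover, for any $B \in B_{\mathrm{inv}}$ and any conjugacy class $C \subseteq G$, either $C \subseteq B$ or $C \cap B = \emptyset$; thus each $(B_{\mathrm{inv}},m)$-simple function is constant on every conjugacy class, and after modifying on a null set an a.e.\ limit inherits the same property. For the converse, I would invoke \Cref{equivalence} twice. First, $f$ is Borel $(B_{\mathrm{inv}},m)$-measurable: for every open $U \subseteq A$, the preimage $f^{-1}(U)$ lies in $B_G$ (by Borel $(B_G,m)$-measurability of $f$) and is conjugation-invariant (since $f$ is constant on each conjugacy class), hence $f^{-1}(U) \in B_{\mathrm{inv}}$. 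Second, for $(B_{\mathrm{inv}},m)$-essential separability, apply \Cref{equivalence} with $\mathcal{M} = B_G$ (the $\sigma$-finiteness hypothesis transfers since $B_{\mathrm{inv}} \subseteq B_G$) to obtain $E_0 \in B_G$ with $m(E_0^c) = 0$ and a separable closed subspace $Y \subseteq A$ containing $f(E_0)$. Then $E' := f^{-1}(Y)$ contains $E_0$, satisfies $m((E')^c) = 0$, is conjugation-invariant (and thus lies in $B_{\mathrm{inv}}$), and $f(E') \subseteq Y$ is separable. Applying \Cref{equivalence} with $\mathcal{M} = B_{\mathrm{inv}}$ now yields the desired $(B_{\mathrm{inv}},m)$-measurability.

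Part (2) is then essentially immediate: the inclusion $L^1_{\mathrm{inv}}(G,A) \subseteq \A$ is norm-preserving since $\|\cdot\|_1$ agrees in both spaces, and closedness follows either from the completeness of $L^1_{\mathrm{inv}}(G,A)$ as a Bochner space over $(G, B_{\mathrm{inv}}, m|_{B_{\mathrm{inv}}})$ or, directly, by passing to an a.e.\ convergent subsequence of any $L^1$-Cauchy sequence and invoking preservation of $(B_{\mathrm{inv}},m)$-measurability under a.e.\ limits. For the ``in particular'' identification, given $f \in \A$ constant on conjugacy classes, the scalar function $\|f\|$ is also constant on conjugacy classes and lies in $L^1(G)$; by Chebyshev's inequality, $\{\|f\|>0\} = \bigcup_n \{\|f\|>1/n\}$ is a countable union of conjugation-invariant Borel sets of finite measure, so $f$ has $\sigma$-finite support with respect to $B_{\mathrm{inv}}$. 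Part (1) then delivers $(B_{\mathrm{inv}},m)$-measurability, and Bochner integrability is automatic from $\|f\| \in L^1(G)$. The only subtle point I anticipate is the forward direction of (1), where the passage from ``a.e.\ limit of conjugacy-class-constant simple functions'' to the pointwise conclusion requires either modification on a null set or, equivalently, the convention that elements of $L^1$-type spaces are equivalence classes modulo a.e.\ equality; everything else is bookkeeping around \Cref{equivalence}.
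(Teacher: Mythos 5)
Your argument is correct and follows essentially the same route as the paper: both directions of (1) hinge on transferring Borel measurability and essential separability between $B_{\mathrm{inv}}$ and $B_G$ via a double application of \Cref{equivalence}, using conjugation-invariance of preimages, and your converse direction (including the device $E'=f^{-1}(Y)$) is the paper's argument verbatim. The one place you diverge is the forward implication of (1): arguing through simple $(B_{\mathrm{inv}},m)$-measurable functions only yields constancy on conjugacy classes off the null set where $s_n\not\to f$, and since that null set need not be a union of conjugacy classes, this is genuinely weaker than the pointwise statement being claimed; the subtlety you flag at the end disappears if you instead note (as the paper does) that \Cref{equivalence} makes $f$ Borel $(B_{\mathrm{inv}},m)$-measurable, so $f^{-1}(\{f(t)\})\in B_{\mathrm{inv}}$ is exactly conjugation-invariant and hence $f(xtx^{-1})=f(t)$ for all $x,t\in G$. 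Your treatment of (2) and of the $\sigma$-finite support needed for the ``in particular'' clause (via Chebyshev on $\|f\|$) is actually more explicit than the paper's.
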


\begin{proof}
  (1) Let $f$ be $(B_{\mathrm{inv}},m)$-measurable.  Since
  $B_{\mathrm{inv}} \subseteq B_G$, $f$ is Borel $(B_G,
  m)$-measurable, by \Cref{equivalence}.  Also, since $f$ is
  $(B_{\mathrm{inv}}, m)$-essentially separably valued, there exists
  $E \in B_{\mathrm{inv}} \subseteq B_G$ such that $m(E^c) = 0$ and
  $f(E)$ is contained in a separable subspace. Hence, $f$ is $(B_G,
  m)$-essentially separably valued.  To see that $f$ is constant on
  conjugacy classes, consider $x, t\in G$. Since $f^{-1}(\{f(t)\})\in
  B_{\mathrm{inv}}$, we have $xf^{-1}(\{f(t)\})x^{-1} =
  f^{-1}(\{f(t)\})$ so that $f(xtx^{-1}) = f(t)$. Since $x$ and $t$
  were arbitrary, it follows that $f$ is constant on the conjugacy
  classes of $G$.
  
  Conversely, suppose $f$ is $(B_G,m)$-measurable and is constant on
  conjugacy classes. Then, by \cref{equivalence} again, $f$ is Borel
  $(B_G,m)$-measurable and $(B_G, m)$-essentially separably valued. We
  first show that $f$ is Borel $(B_{\mathrm{inv}},m)$-measurable.  Let $x \in
  G$ and $U \subset G$ be open. Then,
  \[
x^{-1} f^{-1}(U)x = \{ x^{-1} g x : f(g) \in U \} = \{ x^{-1} x g
x^{-1} x : f(g) \in U \} = f^{-1} (U)
  \]
because $f(g) = f(xgx^{-1})$ for every $g \in G$.  This proves that
  $f^{-1}(U) \in B_{\mathrm{inv}}$ and hence $f$ is Borel
  $(B_{\mathrm{inv}},m)$-measurable.

Further, since $f$ is $(B_G, m)$-essentially separably valued, there
exists an $E\in B_G$ such that $m(E^c) = 0$ and $f(E)$ is contained in
a separable closed subspace $Y$ of $A$.  Let $\tilde{E} =
f^{-1}(Y)$. Since $Y^c$ is open and $f$ is Borel
$(B_{\mathrm{inv}},m)$-measurable, $\tilde{E} \in B_{\mathrm{inv}}$. Clearly,
$f(\tilde{E}) \subset Y$ and since $f^{-1}(Y^c) \subseteq E^c$, we
have $m(\tilde{E}^c) = m(f^{-1}(Y^c)) = 0$.  Hence, $f$ is $(B_{\mathrm{inv}},
m)$-essentially separably valued.

In particular, by \cref{equivalence} again, $f$ is
$(B_{\mathrm{inv}},m)$-measurable. \smallskip
	
(2) From the definition of Bochner integrability, it is
clear that $L^1_{\mathrm{inv}}(G,A) \subseteq \A$. And, since
\[
\|f\|_{L^1_{\mathrm{inv}}(G, A)} = \big\| \|f\|
\big\|_{L^1_{\mathrm{inv}}(G)} = \big\|
\|f\|\big\|_{L^1(G)} = \|f\|_{\A}
\]
for every $f\in L^1_{\mathrm{inv}}(G,A)$, it follows that $L^1_{\mathrm{inv}}(G,A)$ is closed in $\A$.
\end{proof}

\begin{theorem}\label{constantonconjugacy}
	Let $G$ be a locally compact ${\bf [IN]}$ group and $A$ be a
        unital Banach algebra. If either $G$ is an ${\bf [SIN]}$ group 
        or $\Z(A) =\C1_A $, then
	\begin{equation*}
	\mcal{Z}(\A) = \{f\in L^1(G, \mcal{Z}(A)): f \ \text{is
		constant on the conjugacy classes of $G$} \}.
	\end{equation*}
	\end{theorem}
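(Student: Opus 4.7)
The plan is to handle each inclusion separately; the forward inclusion $(\subseteq)$ then splits along the two clauses of the hypothesis.

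For the reverse inclusion, suppose $f \in L^1(G, \mathcal{Z}(A))$ is constant on conjugacy classes, so by \Cref{L-inv} there is a $B_{\mathrm{inv}}$-measurable representative $\tilde{f}$ of $f$ satisfying $\tilde{f}(gyg^{-1}) = \tilde{f}(y)$ for all $g, y \in G$ pointwise. To check $f \in \mathcal{Z}(\A)$ via \Cref{centre1}, I use that (a) the ${\bf [IN]}$ hypothesis makes $G$ unimodular, so $\Delta \equiv 1$, and (b) the $\mathcal{Z}(A)$-valuedness of $f$ gives $f(y) a = a f(y)$ pointwise. These reduce the centrality condition to $f(yx^{-1}) = f(x^{-1}y)$ a.e.\ $y$ for each $x$. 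Substituting $g = x$ and $y \mapsto x^{-1}y$ into the conjugation invariance of $\tilde{f}$ yields this identity for $\tilde{f}$, and passing back to $f$ adds only null sets of the form $(Ex)^c$ and $(xE)^c$, which remain null by unimodularity.

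For the forward inclusion, let $f \in \mathcal{Z}(\A)$. By \Cref{centervaluedfunctionsincenter}, $f \in L^1(G, \mathcal{Z}(A))$; applying \Cref{centre1} with $a = 1_A$ and using $\Delta \equiv 1$, we get $f \cdot x^{-1} = x \cdot f$ in $\A$ for every $x \in G$, equivalently $f(xyx^{-1}) = f(y)$ for a.e.\ $y$, for each fixed $x$. The remaining task is to upgrade this ``separately-a.e.'' invariance to the existence of a $B_{\mathrm{inv}}$-measurable representative, i.e., $f \in L^1_{\mathrm{inv}}(G, A)$. When $\mathcal{Z}(A) = \mathbb{C} 1_A$, write $f = g \cdot 1_A$ with $g \in L^1(G)$; the above identity forces $g \in \mathcal{Z}(L^1(G))$, and Losert's theorem \cite{losert} (valid for ${\bf [IN]}$ groups) gives that $g$ is constant on conjugacy classes, whence so is $f$.

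The substantive case is when $G$ is ${\bf [SIN]}$, where I would regularise $f$ by a central approximate identity. For each compact symmetric invariant neighbourhood $V$ of $e$, set $u_V := (\chi_V * \chi_V)/m(V)^2 \in C_c(G)$; invariance of $V$ and unimodularity give $u_V(gyg^{-1}) = u_V(y)$ pointwise, so $u_V \in \mathcal{Z}(L^1(G))$, and as $V$ runs over a basis of such neighbourhoods, $\{u_V\}$ is a bounded approximate identity for $L^1(G)$. Put $f_V := u_V * f \in \A$. The change of variable $z = gwg^{-1}$ (legitimate by invariance of $V$ and unimodularity) combined with Fubini applied to $u_V(w) \cdot (f(gw^{-1}yg^{-1}) - f(w^{-1}y))$, together with the ``separately-a.e.'' conjugation invariance of $f$, gives $f_V(gyg^{-1}) = f_V(y)$ for a.e.\ $y$, for each $g$. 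Since $u_V \in C_c(G)$ and $f \in L^1(G, A)$, the convolution $f_V$ is continuous, so both sides are continuous in $y$ and the equality holds pointwise; hence $f_V \in L^1_{\mathrm{inv}}(G, A)$. Because $f_V \to f$ in $\A$ and $L^1_{\mathrm{inv}}(G, A)$ is closed by \Cref{L-inv}, $f \in L^1_{\mathrm{inv}}(G, A)$.

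The main obstacle is the $\mathbf{[SIN]}$ case: one has to verify that $u_V$ really lies in $\mathcal{Z}(L^1(G))$ and that $\{u_V\}$ is an approximate identity, and then run the Fubini argument precisely so as to promote the ``a.e.-in-$f$'' invariance to an ``a.e.-in-$f_V$'' invariance, after which the continuity of $f_V$ does the final upgrading to a pointwise identity—the ${\bf [SIN]}$ hypothesis is what guarantees both the invariant neighbourhood basis and that the change of variable lands back inside $V$.
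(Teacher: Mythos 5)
Your proposal is correct and follows essentially the same route as the paper: the easy inclusion via \Cref{centre1} and unimodularity, the case $\Z(A)=\C 1_A$ via Losert, and in the ${\bf [SIN]}$ case mollification by a central approximate identity built from invariant neighbourhoods, continuity of the convolution to upgrade a.e.\ conjugation-invariance to a pointwise identity, and closedness of $L^1_{\mathrm{inv}}(G,A)$ from \Cref{L-inv}. The only (immaterial) difference is that the paper takes $u_\alpha$ to be normalized characteristic functions of invariant neighbourhoods and gets the invariance of $u_\alpha * f$ by observing that it again lies in $\Z(\A)$ and reapplying \Cref{centre1}, whereas you use $\chi_V * \chi_V/m(V)^2$ and a direct change-of-variables/Fubini computation.
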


 \begin{proof}
 Suppose $f\in L^1(G, \mcal{Z}(A))$ and is constant on conjugacy
 classes.  Then, for every $x\in G$ and $a \in A$, we have
	 $$ \Big(x^{-1} \cdot ((f \cdot x^{-1}) a) - x^{-1} \cdot (a (x
	 \cdot f))\Big)(y) = f(x y x^{-1}) a - a f(y) = f(y) a- f(y)a =
	 0,$$
	 for every $y \in G$.  Thus, $(f \cdot x^{-1}) a = a (x \cdot
         f) $ for all $x \in G$ and $a \in A$, which by \Cref{centre1}
         shows that $f \in \Z(\A)$, since $G$ is unimodular.
	 
 Conversely, consider any $ f \in \mcal{Z}(\A)$. By
 \Cref{centervaluedfunctionsincenter}, $f$ is $\mathcal{Z}(A)$-valued.

 If $\Z(A)= \C1_A$, then $f \in L^1(G, \C 1_A) \cong L^1(G)$.  In
 particular, $f$ commutes with every element of $L^1(G, \C 1_A)
 \cong L^1(G)$ and hence  is constant on the conjugacy classes of
 $G$, by \cite{losert}.
	 
 We next consider the case when $G$ is an ${\bf [SIN]}$ group. Then,
 there exists an approximate identity $\{ u_{\alpha} \}$ of $L^1(G)$
 contained in $L^{\infty}(G) \cap \Z(L^1(G))$ determined by a family
 of characteristic functions of compact invariant neighbourhoods of identity.
 Then, $\{u_{\alpha} \ot 1_A \}$ is an approximate identity of $L^1(G)
 \obp A$ contained in $\Z(L^1(G) \obp A)$. Thus, $\{f_{\alpha} = 1_A
 u_{\alpha}\}$ is an approximate identity of $\Z(\A)$ contained in
 $L^{\infty}(G,A)$. 

Then, $f_{\alpha} \ast f \in \Z(\A)$ because $\Z(\A)$ is a subalgebra
of $\A$.  Note that, for any $s,t \in G$, we have
\begin{eqnarray*} 
 \| f_{\alpha} \ast f(s) - f_{\alpha} \ast f(t) \| &=& \left \| \int_G
 f_{\alpha} (sx) f(x^{-1}) dx - \int_G f_{\alpha} (tx) f(x^{-1}) dx
 \right \| \\ & =& \left \| \int_G f_{\alpha}(x^{-1}) (f \cdot s)(x)
 dx - \int_G f_{\alpha}(x^{-1}) (f \cdot t)(x) dx \right \| \\ &=&
 \left \| \int_G \big( f_{\alpha}(x^{-1}) (f \cdot s)(x) -
 f_{\alpha}(x^{-1}) (f \cdot t)(x)\big) dx \right \| \\ & \leq &
 \int_G \| f_{\alpha}(x^{-1}) (f \cdot s)(x) - f_{\alpha}(x^{-1}) (f
 \cdot t)(x) \| dx \\ & \leq & \int_G \| ((f \cdot s)- (f \cdot t))(x)
 \| dx \qquad \qquad (\text{ since } \|f_{\alpha}\|_{\infty}=1\ \forall \alpha)\\ & = & \| f \cdot s - f \cdot t \|_1.
\end{eqnarray*}
 From \Cref{cont} it then follows that $f_{\alpha} \ast f$ is
 continuous.

Since $f_{\alpha} \ast f \in \Z(\A)$ and $A$ is unital, it follows
from \Cref{centre1} that $x^{-1} \cdot (f_{\alpha} \ast f) =
(f_{\alpha} \ast f) \cdot x$ for every $x \in G$.  Then, by continuity
of $f_{\alpha} \ast f$, we get $f_{\alpha} \ast f(xy) = f_{\alpha}
\ast f(yx)$ for every $y \in G$. Replacing $y$ by $x^{-1}y$ we see
that $f_{\alpha} \ast f \in L^1_{\mathrm{inv}}(G,A)$ for all
$\alpha$. By \Cref{L-inv}, $L^1_{\mathrm{inv}}(G,A)$ is closed in $\A$;
hence, $f$, being the  limit of the net $\{f_\alpha * f\}$, belongs to $L^1_{\mathrm{inv}}(G,A)$.
This proves that $f$ is constant on the conjugacy classes of $G$.
    \end{proof}

It was recently shown (in \cite[Theorems 1 $\&$ 2]{GJ2}) that for
$C^*$-subalgebras $A_0 \subseteq A$ and $B_0 \subseteq B$, $A_0 \obp
B_0$ can be identified with the Banach subalgebra $\overline{A_0
  \otimes B_0}^\gamma$ of $A \obp B$; and also that $\mathcal{Z}(A
\obp B) = \mathcal{Z}(A) \obp \mathcal{Z}(B)$. However, in general,
such identifications are not known for Banach algebras. For
generalized group algebras, we obtain such an identification of the
center for certain classes of groups. We need some auxiliary results
to prove this.

First, two lemmas that will be needed ahead, the first of which is
borrowed from \cite{braconnier} (also see \cite[Theorem 2.3 and Page
  8]{grosser} and \cite[$\S 1.0$]{mosak1}).

\begin{lemma}\cite{braconnier, grosser, mosak1}\label{mosak-lemma}
  Let $G$ be a locally compact group. Then, there exists a continuous
  homomorphism $\tilde{\Delta}: \mathrm{Aut}(G) \ra \R_{>0}$ such that
  \[
\int_G (f \circ \beta^{-1}) (x) dx = \tilde{\Delta} (\beta)\int_G f(x) dx
\] 
for every $f \in L^1(G)$ and $\beta \in \mathrm{Aut}(G)$.

Moreover,  $G$ is unimodular if and only if  $\tilde{\Delta} (\beta) = 1$ for
all $\beta \in \ol{\mathrm{Inn}(G)}$.
\end{lemma}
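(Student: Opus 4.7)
The plan is to construct $\tilde\Delta$ via uniqueness of the left Haar measure. For each $\beta \in \mathrm{Aut}(G)$, define a Borel measure $m_\beta$ on $G$ by $m_\beta(E) := m(\beta^{-1}(E))$. Since $\beta$ is a topological group isomorphism, $m_\beta$ is a nonzero Radon measure, and its left invariance is immediate: $m_\beta(xE) = m(\beta^{-1}(x)\beta^{-1}(E)) = m(\beta^{-1}(E)) = m_\beta(E)$ for every $x \in G$ and Borel $E \subseteq G$. By uniqueness of the left Haar measure (up to a positive scalar), there exists a unique $\tilde\Delta(\beta) > 0$ such that $m_\beta = \tilde\Delta(\beta)\, m$. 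This immediately yields the integral formula by the standard approximation argument: it holds tautologically for indicator functions, extends by linearity to simple functions, and then to all of $L^1(G)$ by monotone convergence and density.

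Next I would verify that $\tilde\Delta$ is a homomorphism. For $\beta_1, \beta_2 \in \mathrm{Aut}(G)$ and any Borel set $E$,
\[
m((\beta_1 \beta_2)^{-1}(E)) = m(\beta_2^{-1}(\beta_1^{-1}(E))) = \tilde\Delta(\beta_2)\, m(\beta_1^{-1}(E)) = \tilde\Delta(\beta_2)\tilde\Delta(\beta_1)\, m(E),
\]
so $\tilde\Delta(\beta_1\beta_2) = \tilde\Delta(\beta_1)\tilde\Delta(\beta_2)$ by uniqueness. For continuity, the main point is to show that $\beta \mapsto \int_G (f\circ \beta^{-1})(x)\,dx$ is continuous at the identity $I \in \mathrm{Aut}(G)$ for a fixed $f \in C_c(G)$ with $\int_G f\, dm \neq 0$. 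Given such an $f$ with compact support $K$ and $\epsilon>0$, uniform continuity of $f$ on $G$ combined with Remark~\ref{types-of-groups}(2) (the evaluation map $\mathrm{Aut}(G)\times G\to G$ is jointly continuous, so the Birkhoff neighbourhoods $N(K,V)$ force $\beta^{-1}(x)$ to stay close to $x$ uniformly on $K$) yields a neighbourhood $N(K',V)$ of $I$ for which $\|f\circ\beta^{-1}-f\|_\infty$ is small and the supports remain in a fixed compact set. Integrating gives $\left|\int_G f\circ\beta^{-1}\, dm - \int_G f\, dm\right|<\epsilon$, which upon dividing by $\int_G f\, dm$ shows $\tilde\Delta$ is continuous at $I$, and the homomorphism property propagates continuity everywhere.

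For the moreover part, I would first compute $\tilde\Delta$ on inner automorphisms. For $g \in G$, write $\mathrm{inn}_g(x) = gxg^{-1}$, so $\mathrm{inn}_g^{-1}(E) = g^{-1}Eg$. Using left invariance of $m$ and the definition of the modular function,
\[
m(g^{-1}Eg) = m(Eg) = \Delta(g)\, m(E),
\]
hence $\tilde\Delta(\mathrm{inn}_g) = \Delta(g)$ for every $g \in G$. If $G$ is unimodular, then $\Delta \equiv 1$, so $\tilde\Delta \equiv 1$ on $\mathrm{Inn}(G)$ and, by continuity of $\tilde\Delta$, also on $\overline{\mathrm{Inn}(G)}$. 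Conversely, if $\tilde\Delta \equiv 1$ on $\overline{\mathrm{Inn}(G)}$, then in particular $\Delta(g) = \tilde\Delta(\mathrm{inn}_g) = 1$ for every $g\in G$, so $G$ is unimodular.

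The step I anticipate to be the main obstacle is the continuity of $\tilde\Delta$: one needs to unpack the Birkhoff topology enough to control $f\circ\beta^{-1}$ uniformly over a compact set, and this is where the slight subtlety of using joint continuity of the evaluation map (as recorded in Remark~\ref{types-of-groups}(2)) becomes essential. Once continuity is secured, the homomorphism property and the unimodularity equivalence are formal consequences.
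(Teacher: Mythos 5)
The paper does not prove this lemma at all: it is imported verbatim from Braconnier, Grosser--Moskowitz and Mosak (the $\tilde\Delta$ here is the classical ``module'' $\mathrm{mod}_G(\beta)$ of an automorphism), so there is no in-paper argument to compare against. Your construction via uniqueness of left Haar measure is exactly the standard proof from those sources, and its skeleton --- left invariance of the transported measure, the homomorphism identity, continuity at $I$ on $C_c(G)$ via the Birkhoff neighbourhoods $N(K,V)$ together with uniform continuity of $f$ and the containment $\mathrm{supp}(f\circ\beta^{-1})\subseteq\beta(\mathrm{supp} f)\subseteq V\,\mathrm{supp} f$, and the computation $\tilde\Delta$ on $\mathrm{Inn}(G)$ versus $\Delta$ --- is all sound.

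There is, however, one slip you should fix: with your convention $m_\beta(E):=m(\beta^{-1}(E))=\tilde\Delta(\beta)\,m(E)$, the integral identity is \emph{not} the one claimed. Indeed $\chi_E\circ\beta^{-1}=\chi_{\beta(E)}$, so
\[
\int_G(\chi_E\circ\beta^{-1})\,dm=m(\beta(E))=\tilde\Delta(\beta)^{-1}m(E),
\]
i.e.\ your $\tilde\Delta$ produces the formula with $\tilde\Delta(\beta)^{-1}$ in place of $\tilde\Delta(\beta)$; the claim that the identity ``holds tautologically for indicator functions'' is therefore off by an inverse. The remedy is cosmetic --- either define $m_\beta(E):=m(\beta(E))$ from the start, or replace $\tilde\Delta$ by its reciprocal (still a continuous homomorphism into $\R_{>0}$) --- and none of the later steps are affected: the homomorphism property, the continuity argument, and the ``moreover'' equivalence (where $\tilde\Delta(\mathrm{inn}_g)$ becomes $\Delta(g)^{\pm1}$, and being identically $1$ is invariant under inversion) all survive unchanged. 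With that correction the proof is complete.
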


Based on this, we deduce the following: 
\begin{lemma}\label{beta-f-lemma}
Let $G$ be a  unimodular locally compact group and $A$ be a 
Banach algebra. Then,
  \begin{enumerate}
\item $\beta$ is measure preserving for every $\beta \in \ol{\mathrm{Inn}(G)}$; 
  \item $\beta \cdot f := f \circ \beta^{-1} \in \A$ and $\|\beta
    \cdot f \|_1 = \|f\|_1$ for every $\beta \in \ol{\mathrm{Inn}(G)}$
    and $f \in \A$; and
  \item the mapping
    \[
 \ol{\mathrm{Inn}(G)} \ni \beta \mapsto \beta \cdot f \in \A
    \]
is continuous for every $f \in \A$.
  \end{enumerate}
  \end{lemma}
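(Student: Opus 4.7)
The plan is to handle the three parts in order, with (1) and (2) being essentially direct consequences of \Cref{mosak-lemma}, and (3) being the substantive part.

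For (1), I would invoke \Cref{mosak-lemma} combined with the unimodularity hypothesis: that lemma gives $\tilde\Delta(\beta) = 1$ for every $\beta \in \overline{\mathrm{Inn}(G)}$, so applying the identity $\int_G f\circ\beta^{-1}\,dm = \tilde\Delta(\beta)\int_G f\,dm$ to $f = \chi_U$ for a Borel set $U$ of finite measure immediately yields $m(\beta(U)) = m(U)$, i.e., $\beta$ is measure preserving.

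For (2), since $\beta^{-1}$ is a homeomorphism, if $\{s_n\}$ is a defining sequence of simple Bochner integrable functions converging almost everywhere to $f$, then $\{s_n \circ \beta^{-1}\}$ is again a sequence of simple $(B_G,m)$-measurable functions converging almost everywhere to $f \circ \beta^{-1}$ (the exceptional null set being handled by part (1)), so $\beta \cdot f$ is $(B_G,m)$-measurable. Moreover $\|\beta \cdot f\|(x) = \|f\|(\beta^{-1}(x))$, so applying (1) to the scalar function $\|f\| \in L^1(G)$ gives $\|\beta \cdot f\|_1 = \|f\|_1$, confirming $\beta \cdot f \in L^1(G,A)$ with the same norm.

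For (3), the key observation is that part (2) makes each $\beta$ act as an isometry on $\A$, so combined with the density of $C_c(G,A)$ in $\A$, a routine $3\epsilon$-argument reduces the claim to continuity at a fixed $\beta_0 \in \overline{\mathrm{Inn}(G)}$ with $f$ replaced by some $\varphi \in C_c(G,A)$ having compact support $F$. The crucial tool here is \Cref{types-of-groups}(2), which says the Birkhoff topology is finer than uniform convergence on compacta; since $\mathrm{Aut}(G)$ is a topological group, inversion is continuous, so $\beta \to \beta_0$ also gives $\beta^{-1} \to \beta_0^{-1}$ uniformly on compacta. Fixing a compact symmetric neighbourhood $V$ of $e$, uniform convergence on $F$ produces a Birkhoff neighbourhood $W_1$ of $\beta_0$ such that $\beta(F) \subseteq V\beta_0(F)$ for every $\beta \in W_1$; hence $\beta \cdot \varphi - \beta_0 \cdot \varphi$ is supported in the compact set $K := V\beta_0(F) \cup \beta_0(F)$ for all such $\beta$. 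Then, using uniform continuity of the compactly supported $\varphi$ on $G$ together with uniform convergence of $\beta^{-1}$ to $\beta_0^{-1}$ on $K$, a further refinement $W_2 \subseteq W_1$ ensures $\|\varphi(\beta^{-1}(x)) - \varphi(\beta_0^{-1}(x))\| < \epsilon/m(K)$ uniformly on $K$ for $\beta \in W_2$; integrating over $K$ yields $\|\beta \cdot \varphi - \beta_0 \cdot \varphi\|_1 \le \epsilon$.

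The main obstacle is the bookkeeping in part (3): controlling the moving supports $\beta(F)$ within a fixed compact set as $\beta$ ranges over a neighbourhood of $\beta_0$. \Cref{types-of-groups}(2) is essential here, for without the comparison between the Birkhoff topology and uniform convergence on compacta one cannot even locate the supports of the functions $\beta \cdot \varphi$ in a common compact set, and the uniform continuity estimate becomes unusable.
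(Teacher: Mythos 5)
Your proof is correct, and parts (1) and (2) coincide with the paper's argument (measure preservation via $\tilde\Delta\equiv 1$ on $\ol{\mathrm{Inn}(G)}$ applied to characteristic functions; measurability of $\beta\cdot f$ by composing a defining sequence of simple functions with $\beta^{-1}$ and using (1) to control the exceptional null set; the isometry from $\|\beta\cdot f\|=\|f\|\circ\beta^{-1}$). Where you genuinely diverge is in part (3). The paper approximates $f$ by an elementary tensor $h=\sum_i f_i\ot a_i\in L^1(G)\ot A$ rather than by an element of $C_c(G,A)$; after the same $3\epsilon$ reduction using the isometry from (2), the middle term collapses to $\sum_i\|\beta_\alpha\cdot f_i-\beta\cdot f_i\|_1\|a_i\|$, and the paper simply cites Mosak for the continuity of $\ol{\mathrm{Inn}(G)}\ni\beta\mapsto\beta\cdot g\in L^1(G)$ in the scalar case. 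You instead prove the continuity from scratch on $C_c(G,A)$: you use the fact that the Birkhoff topology refines uniform convergence on compacta (together with the built-in control of $\beta^{-1}$ in the basic neighbourhoods $N(K,V)$) to trap the supports $\beta(F)$ in a fixed compact set $K=V\beta_0(F)$, and then combine uniform continuity of $\varphi$ with uniform convergence of $\beta^{-1}$ on $K$ to bound the integrand by $\epsilon/m(K)$. Your route is more self-contained --- it does not outsource the scalar continuity to Mosak --- at the cost of the support-bookkeeping you flag; the paper's route is shorter because the vector-valued statement is reduced to a known scalar one. Both arguments are sound.
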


 \begin{proof}
(1): Let $\beta \in \ol{\mathrm{Inn}(G)}$. Since $G$ is unimodular,
   for any $S \in B_G$, by \Cref{mosak-lemma}, we have
   \[
m(S)=  \tilde{\Delta}(\beta) m(S) =
   \tilde{\Delta} (\beta) \| \chi_S \|_1 = \| \chi_S \circ \beta^{-1}\|_1 = \| \chi_{\beta(S)}  \|_1
   =  m(\beta(S)).
  \] 

  (2): Let $\beta \in \ol{\mathrm{Inn}(G)}$ and $f \in \A$. We first
   show that $\beta\cdot f$ is $(B_G,m)$-measurable.
  
Since $f$ is $(B_G, m)$-measurable, there exists a sequence $\{s_n\}$ of $A$-valued simple measurable functions
and a null set $V$ such that $s_n\ra f$ on $V^c$. Then, by (1),
$\beta(V)$ is a null set; and, clearly, $\{\beta \cdot s_n\}$ is a
sequence of $A$-valued simple $(B_G, m)$-measurable functions
converging to $\beta \cdot f$ on $\beta( V)^c$. This proves that
$\beta \cdot f$ is $(B_G, m)$-measurable. For integrability, note that
$ \| \beta \cdot f \| = \|f\|\circ \beta^{-1} \in L^1(G)$, by
\Cref{mosak-lemma}.  Hence $\beta \cdot f \in \A$.

Next, by \Cref{mosak-lemma} again, we have
\[
\| \gamma \cdot f \|_1 = \int_G \| \gamma \cdot f \| = \int_G\|f
\|\circ \gamma^{-1} = \tilde{\Delta}(\gamma) \int_G \| f \|
=\tilde{\Delta}(\gamma) \| f \|_1
\]
for all $\gamma \in \mathrm{Aut}(G)$. Note that, $G$ being unimodular,
we have $\tilde{\Delta}(\gamma) = 1$ for every $\gamma \in
\ol{\mathrm{Inn}(G)}$ (by \Cref{mosak-lemma});  so that $\| \gamma \cdot f \|_1 =
\|f\|_1$ for all $\gamma \in \ol{\mathrm{Inn}(G)}$.
\smallskip

(3): Let $f \in \A$, $\{\beta_{\alpha}\}$ be a net converging to some
$\beta$ in $\ol{\mathrm{Inn}(G)}$ and $\epsilon >0$. Fix an  $h =
\sum_{i=1}^{n} f_i \ot a_i\in L^1(G) \ot A$ such that $\| f -
h \|_1 < \epsilon$.  For each $\alpha$, we have 
\[
\|\beta_{\alpha} \cdot f - \beta \cdot f \|_1 \leq \|
\beta_{\alpha} \cdot f - \beta_{\alpha} \cdot h 
\|_1 + \| \beta_{\alpha} \cdot h - \beta \cdot
h \|_1 + \| \beta \cdot h
- \beta \cdot f \|_1.
\]
On the right hand side of this inequality, the first and the third
terms are less than $\epsilon$ for every $\alpha$, becasue of (2); and
the middle term simplifies to
\[
\| \beta_{\alpha} \cdot \sum_{i=1}^{n} f_i a_i -
\beta \cdot \sum_{i=1}^{n} f_i a_i \|_1 \leq \sum_{i=1}^{n} \|
\beta_{\alpha} \cdot f_i - \beta \cdot f_i \|_1 \|a_i \|\ \text{ for all } \alpha.
\]
Now, using the fact that for each $g \in L^1(G)$, the mapping
$\ol{\mathrm{Inn}(G)} \ni \beta \mapsto \beta \cdot g\in L^1(G)$ is
continuous (see, \cite[Page no. 281]{mosak1}), we can choose an
$\alpha_0$ such that the last inequality is less than $\epsilon$ for
all $\alpha \geq \alpha_0$, and we are done.
 \end{proof}
\color{black}

 Prior to proving the main result, we also introduce a
 $\sharp$-operator on $L^1(G,A)$ with some useful properties. Note
 that such a $\sharp$-operator on $L^1(G)$ has been studied and used by
 various authors in the past (see \cite{mosak1} and the references
 therein). 
\begin{prop}\label{hash}
	Let $G$ be a locally compact ${\bf [FIA]}^-$ group and $A$ be a unital 
	Banach algebra. Then, there exists a projection
	$\sharp: L^1(G,A) \to L^1_{\mathrm{inv}}(G,A)$ which 
maps $ L^1(G,\Z(A))$ onto $ \Z(L^1(G,A)) $.
\end{prop}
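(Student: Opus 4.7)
The plan is to realize $\sharp$ as an averaging (Bochner) integral against the normalized Haar measure on the compact group $K := \overline{\mathrm{Inn}(G)}$. Note that $K$ is compact in $\mathrm{Aut}(G)$ by the ${\bf [FIA]}^-$ hypothesis, and since ${\bf [FIA]}^- \Rightarrow {\bf [SIN]}$ (\Cref{types-of-groups}(1)) makes $G$ unimodular, \Cref{beta-f-lemma} is available. Denoting the normalized Haar measure on $K$ by $\mu$, I would set
\[
f^\sharp := \int_K \beta \cdot f \, d\mu(\beta) \quad (f \in \A),
\]
with the integral interpreted in the Bochner sense in $\A$. Well-definedness follows from \Cref{beta-f-lemma}: the integrand $\beta \mapsto \beta \cdot f$ is continuous and isometric on the compact group $K$, so $\|f^\sharp\|_1 \leq \|f\|_1$ and $\sharp$ is a bounded linear operator on $\A$.

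Next, I would verify that $f^\sharp$ is constant on the conjugacy classes of $G$, which by \Cref{L-inv} is equivalent to showing $f^\sharp \in L^1_{\mathrm{inv}}(G,A)$. For $\gamma \in \mathrm{Inn}(G) \subseteq K$, the bounded linear map $\A \ni h \mapsto \gamma \cdot h \in \A$ commutes with the Bochner integral; together with the identity $\gamma \cdot (\beta \cdot f) = (\gamma \beta) \cdot f$ and the left invariance of $\mu$, this yields
\[
\gamma \cdot f^\sharp = \int_K (\gamma \beta) \cdot f \, d\mu(\beta) = \int_K \beta \cdot f \, d\mu(\beta) = f^\sharp,
\]
so $(\gamma \cdot f^\sharp)(x) = f^\sharp(\gamma^{-1}(x)) = f^\sharp(x)$ a.e. for every $\gamma \in \mathrm{Inn}(G)$, which is precisely the invariance required.

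For the projection property, it suffices to show $g^\sharp = g$ whenever $g \in L^1_{\mathrm{inv}}(G,A)$. For such a $g$ and any $\gamma = \mathrm{Inn}_t$, one has $(\gamma \cdot g)(x) = g(txt^{-1}) = g(x)$, so $\gamma \cdot g = g$ for every $\gamma \in \mathrm{Inn}(G)$; by the continuity of $K \ni \beta \mapsto \beta \cdot g$ from \Cref{beta-f-lemma}(3), this extends to $\beta \cdot g = g$ for every $\beta \in K$, whence $g^\sharp = \int_K g \, d\mu = g$.

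Finally, for the center-valued assertion, observe that if $f$ is $\Z(A)$-valued then each $\beta \cdot f$ is $\Z(A)$-valued, so $f^\sharp \in L^1(G, \Z(A))$. Being also constant on conjugacy classes and $G$ being ${\bf [IN]}$ and ${\bf [SIN]}$, \Cref{constantonconjugacy} gives $f^\sharp \in \Z(\A)$. Conversely, every $g \in \Z(\A)$ is $\Z(A)$-valued and constant on conjugacy classes by \Cref{constantonconjugacy}; hence $g \in L^1(G,\Z(A)) \cap L^1_{\mathrm{inv}}(G,A)$ and $g = g^\sharp$, proving surjectivity. The principal technical obstacle is the interchange of the bounded operator $\gamma \cdot$ with the Bochner integral and the use of the left invariance of the Haar measure on $K$; once this is cleanly handled, the rest follows by assembling the preceding lemmas.
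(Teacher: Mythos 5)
Your construction is the same averaging idea as the paper's --- integrating $\beta\cdot f$ over the compact group $K=\overline{\mathrm{Inn}(G)}$ against its normalized Haar measure --- and most of your steps are sound: the Bochner integral in $\A$ is well defined by \Cref{beta-f-lemma}, the contractivity is correct, the projection property $g^\sharp=g$ for invariant $g$ is actually \emph{cleaner} than the paper's argument (which has to invoke Mosak's convex-hull lemma, whereas for you $g^\sharp=\int_K g\,d\mu=g$ is immediate), and Step on the center matches the paper's Step III. However, there is a genuine gap at the point where you conclude that $f^\sharp\in L^1_{\mathrm{inv}}(G,A)$. What you have proved is that for each fixed $\gamma\in\mathrm{Inn}(G)$ the identity $\gamma\cdot f^\sharp=f^\sharp$ holds in $\A$, i.e.\ $f^\sharp(\gamma^{-1}(x))=f^\sharp(x)$ for almost every $x$, \emph{with the null set depending on $\gamma$}. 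Membership in $L^1_{\mathrm{inv}}(G,A)$, as characterized in \Cref{L-inv}, requires a representative that is literally constant on the conjugacy classes (equivalently, $(B_{\mathrm{inv}},m)$-measurable). When $\mathrm{Inn}(G)$ is uncountable the union of these exceptional null sets can be all of $G$, so the family of a.e.\ identities does not by itself produce such a representative; this is exactly the kind of subtlety the paper itself flags in the proof of \Cref{centre} (``where the null set depends on $t$'').

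The paper closes this gap by a different order of operations: it first defines $f^\sharp$ \emph{pointwise} for $f\in C_c(G,A)$, shows (using that ${\bf [FIA]}^-$ implies ${\bf [SIN]}$ to get an invariant neighbourhood for the uniform-continuity estimate) that $f^\sharp$ is a continuous, compactly supported function satisfying $f^\sharp(yxy^{-1})=f^\sharp(x)$ for \emph{every} $x,y$, hence lies in $C_c(G,A)\cap L^1_{\mathrm{inv}}(G,A)$, and only then extends $\sharp$ by density using that $L^1_{\mathrm{inv}}(G,A)$ is closed in $\A$ (\Cref{L-inv}(2)). You could repair your argument the same way: restrict your Bochner integral to $f\in C_c(G,A)$, check that it is represented by the pointwise integral $x\mapsto\int_K f(\beta^{-1}(x))\,d\mu(\beta)$ (which is honestly invariant by left invariance of $\mu$, applied pointwise), and then extend by continuity. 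As written, though, the assertion ``which is precisely the invariance required'' is not justified, and everything downstream (the range of $\sharp$, and the application of \Cref{constantonconjugacy} to $f^\sharp$) depends on it.
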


\begin{proof}

	Since $G$ is an ${\bf [FIA]}^-$group, $\overline{\mathrm{Inn}(G)}$ is a
	compact subgroup of $\mathrm{Aut}(G)$ with respect to the
	Birkhoff topology; so, $\overline{\mathrm{Inn}(G)}$ has a unique
	normalized Haar measure, say, $d\beta$.
	\smallskip
	
	{\bf Step I:} We find a linear contraction $\sharp : \A \ra
        L^1_{\mathrm{inv}}(G,A)$.

        Note that, for $f \in C_c(G,A)$ and $x \in G$, since the
        evaluation map $\mathrm{Aut}(G) \times G \to G$ is continuous
        (\Cref{types-of-groups}), we observe that the mapping
	\[
	\overline{\mathrm{Inn}(G)} \ni \beta \mapsto (\beta \cdot f)(x) \in
	A
	\]
	is continuous. In particular, the map $\beta \mapsto \| (\beta
        \cdot f)(x)\|$, being continuous on a compact set, is
        integrable. This allows us to define a function $f^\sharp : G
        \ra A$ by
	\[
	f^\sharp(x) = \int\limits_{\overline{\mathrm{Inn}(G)}} (\beta
        \cdot f)(x) d\beta \text{ for } x \in G.
	\]
	We assert that $f^\sharp \in C_c(G, A)\cap
        L^1_{\mathrm{inv}}(G,A)$. Note that if $F$ is the compact
        support of $f$, then for any $y \notin
        \overline{\mathrm{Inn}(G)}(F)$, we have $f^\sharp(y) = 0 $,
        since $\beta^{-1}(y) \notin F$ for any $\beta \in
        \overline{\mathrm{Inn}(G)}$; thus, $\text{supp}(f^\sharp)
        \subseteq \overline{\mathrm{Inn}(G)}(F)$ which is compact
        (being the image of the compact set
        $\ol{\mathrm{Inn}(G)}\times F$ under the continuous evaluation
        map $ (\beta , x) \mapsto \beta(x)$). Thus, $f^\sharp$ is
        compactly supported.
	
	We now show that $f^\sharp$ is continuous.  Note that, being
        continuous and supported on a compact set, $f$ is left and
        right uniformly continuous; so, there exists a neighbourhood
        $V$ of $e$ such that $\|f(x) - f(y)\| < \epsilon$ whenever
        $x^{-1}y, y x^{-1} \in V$, by \cite[Theorem 4.15]{hewittross}.
        Further, since $G$ is an ${\bf [SIN]}$ group
        (\Cref{types-of-groups}), we can choose $V$ to be invariant.
       Then, for any $x,y \in G$ such that $x^{-1}y \in V$, it is
        easily seen that $(\beta^{-1}(x))^{-1}\beta^{-1}(y) \in V$ for
        every $\beta \in \mathrm{Inn}(G)$; so that $\|
        f(\beta^{-1}(x)) - f(\beta^{-1}(y)) \| \leq \epsilon$ for
        every $\beta \in \overline{\mathrm{Inn}(G)}$ and, hence,
	\[
	\| f^\sharp(x) - f^\sharp(y) \| \leq
	\int\limits_{\overline{\mathrm{Inn}(G)}} \| (f(\beta^{-1} x) -
	f(\beta^{-1} y)) \| d\beta \leq \epsilon.
	\]
        This proves that $f^\sharp$ is (uniformly) continuous. Also,
        note that, for any $x,y \in G$,
\begin{eqnarray*}
	f^\sharp(yxy^{-1}) & = &
	f^\sharp(\mathrm{Ad}_y^{-1}(x)) \\
        & = &  
	\int\limits_{\overline{\mathrm{Inn}(G)}} (\beta \cdot
	f)(\mathrm{Ad}_y^{-1} (x)) d\beta\\ & =
	&\int\limits_{\overline{\mathrm{Inn}(G)}} \big(\mathrm{Ad}_y
	\cdot(\beta \cdot f)\big)(x) d\beta \\ & = &
	\int\limits_{\overline{\mathrm{Inn}(G)}} (\beta\cdot f)(x)
	d\beta \qquad (\text{by left
		invariance of the Haar measure $d\beta$})
	\\
	& = & f^\sharp(x).\end{eqnarray*}
	Hence,  $f^\sharp \in L^1_{\mathrm{inv}}(G,A)$, by \Cref{L-inv}.

	We next prove that the map $\sharp: C_c(G,A) \ra
        L^1_{\mathrm{inv}}(G,A)$ is a linear contraction. Clearly,
        $\sharp$ is a linear map.  And, for any $f \in C_c(G,A)$, by
        \Cref{beta-f-lemma} and an appropriate application of
        Tonelli's Theorem, we have
	\[
	\| f^\sharp \|_1 = \int\limits_G \Big\|
        \int\limits_{\overline{\mathrm{Inn}(G)}} (\beta \cdot f)(x)
        d\beta \Big\| dx \leq \int\limits_{\overline{\mathrm{Inn}(G)}}
        \int\limits_G \|(\beta \cdot f)(x) \| dx d\beta =
        \int\limits_{\overline{\mathrm{Inn}(G)}} \|\beta \cdot f \|_1
        d\beta = \| f \|_1.
	\]	
Thus, $\sharp: C_c(G, A) \ra L^1_{\mathrm{inv}}(G,A)$ extends to a
linear contraction $\sharp: \A \to L^1_{\mathrm{inv}}(G,A)$.\smallskip
	
	{\bf Step II:} We now show that the operator $\sharp$ is
        identity on $L^1_{\mathrm{inv}}(G,A)$.

	Let $f \in L^1_{\mathrm{inv}}(G,A)$ and $\epsilon >0$. It
        suffices to show that $\| f^\sharp - f \|_1 \leq \epsilon$.
        For this, we first assert that $\beta \cdot f = f$ for every
        $\beta \in \overline{\mathrm{Inn}(G)}$. Note that, for $\beta
        = \mathrm{Ad}_y \in \mathrm{Inn}(G)$ for some $y \in G$, by
        \Cref{L-inv}, we have
	\[
	(\beta \cdot f)(x) = f(y x y^{-1})  = f(x) \text{ for all }  x \in G.
	\]
	 So, $\beta \cdot f = f$ for every $\beta \in
	\mathrm{Inn}(G)$; and, by the continuity of the map $\beta \mapsto
	\beta \cdot f$ (\Cref{beta-f-lemma}), we deduce that 
	\begin{equation}\label{beta-f-f}
	\beta \cdot f
	= f \text{ for every } \beta \in \overline{\mathrm{Inn}(G)}.
	\end{equation}
	
	Now, fix an $h \in C_c(G,A)$ such that $\| f - h \|_1 <
	\epsilon/3$. Then, imitating the proof  
	\cite[Lemma 1.4]{mosak1} verbatim, we see that $h^\sharp$ belongs to
	the closed convex hull of $\{\beta \cdot h: \beta \in
	\overline{\mathrm{Inn}(G)}\}$. So, there exist
	non-negative scalars $c_i, 1 \leq i \leq r$ with $\sum_{i=1}^r c_i =
	1$ and automorphisms $\{\beta_i : 1 \leq i \leq r\} \subset
	\overline{\mathrm{Inn}(G)}$ such that $\| h^\sharp - \sum_{i=1}^r c_i
	(\beta_i \cdot h) \|_1 \leq \epsilon/3$. Thus,
	\begin{align*}
	\| f^\sharp - f \|_1 & = \Big \| f^\sharp - \sum_{i=1}^r c_i
        (\beta_i \cdot f) \Big \|_1 \quad \text{\hspace*{50mm} (by
          Equation}\ (\ref{beta-f-f})) \\ & \leq \Big \| f^\sharp -
        h^\sharp \Big \|_1 + \Big \| h^\sharp - \sum_{i=1}^r c_i
        (\beta_i \cdot h) \Big \|_1 + \Big \| \sum_{i=1}^r c_i
        (\beta_i \cdot h) - \sum c_i (\beta_i \cdot f) \Big \|_1 \\ &
        \leq \Big \|f - h \Big \|_1 + \Big \| h^\sharp - \sum_{i=1}^r
        c_i (\beta_i \cdot h) \Big \|_1 + \sum_{i=1}^r c_i \Big \|h-f
        \Big \|_1 \qquad \qquad (\text{by Lemma }\ref{beta-f-lemma})
        \\ & \leq \epsilon.
	\end{align*}
	
	{\bf Step III:}	Finally, we show that $\sharp$ maps $L^1(G, \Z(A))$ onto
	$\Z(\A)$.

	Let $f \in L^1(G,\Z(A))$. Consider a sequence $\{f_n\} \subset
        C_c(G,\Z(A))$ such that $f_n \to f$, then $f_n^\sharp \to
        f^\sharp$. By the definition of the $\sharp$ operator,
        $f_n^\sharp \in C_c(G, \Z(A))\subset L^1(G, \Z(A))$, so that
        $f^\sharp \in L^1(G, \Z(A))$. The result now follows from the
        fact that $\Z(\A) = L^1(G, \Z(A)) \cap
        L^1_{\mathrm{inv}}(G,A)$ (by \Cref{constantonconjugacy}).
	\end{proof}

\begin{lemma}\label{centreofL1GA-FC}
	Let $A$ be a Banach algebra and $G$ be a locally compact group such that
	$\mathcal{Z}(L^1(G)) $ is complemented in $L^1(G)$ by a
	projection of norm one.  Then,
	\[
	\mathcal{Z}(L^1(G)) \obp \mcal{Z}(A) \subseteq \mathcal{Z}(L^1(G)
	\obp A).
	\]
\end{lemma}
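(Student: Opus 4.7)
The plan is to use the norm-one projection to realize $\mathcal{Z}(L^1(G)) \obp \mathcal{Z}(A)$ as an honest subspace of $L^1(G) \obp A$, and then show that on the algebraic tensor product level, elements of this subspace commute with everything, extending the claim to the whole space by density and continuity.

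First, let $P : L^1(G) \to \mathcal{Z}(L^1(G))$ denote the given projection of norm one, and let $\iota : \mathcal{Z}(L^1(G)) \hookrightarrow L^1(G)$ and $\iota' : \mathcal{Z}(A) \hookrightarrow A$ be the inclusions. By the standard functoriality of $\obp$, the maps $P \otimes \mathrm{id}_A$ and $\iota \otimes \mathrm{id}_A$ extend to contractions $L^1(G) \obp A \to \mathcal{Z}(L^1(G)) \obp A$ and $\mathcal{Z}(L^1(G)) \obp A \to L^1(G) \obp A$ whose composition (in the appropriate order) is the identity on $\mathcal{Z}(L^1(G)) \obp A$. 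Hence $\iota \obp \mathrm{id}_A$ isometrically embeds $\mathcal{Z}(L^1(G)) \obp A$ as a closed subspace of $L^1(G) \obp A$. Composing with the canonical contraction $\mathrm{id}_{\mathcal{Z}(L^1(G))} \obp \iota'$ places $\mathcal{Z}(L^1(G)) \obp \mathcal{Z}(A)$ inside $L^1(G) \obp A$ and the inclusion in the statement is understood through this map.

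Next, check the commutation on elementary tensors. For $f \in \mathcal{Z}(L^1(G))$, $a \in \mathcal{Z}(A)$, $g \in L^1(G)$ and $b \in A$, the centrality of $f$ and $a$ gives
\[
(f \otimes a)(g \otimes b) = (f \ast g) \otimes (ab) = (g \ast f) \otimes (ba) = (g \otimes b)(f \otimes a).
\]
By bilinearity, every element of the algebraic tensor product $\mathcal{Z}(L^1(G)) \otimes \mathcal{Z}(A)$ commutes with every element of $L^1(G) \otimes A$.

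Finally, for any fixed $v \in L^1(G) \obp A$, the commutator map $u \mapsto u v - v u$ is continuous on $L^1(G) \obp A$. Since $\mathcal{Z}(L^1(G)) \otimes \mathcal{Z}(A)$ is norm-dense in $\mathcal{Z}(L^1(G)) \obp \mathcal{Z}(A)$, the commutator vanishes on all of $\mathcal{Z}(L^1(G)) \obp \mathcal{Z}(A)$ for any $v \in L^1(G) \otimes A$; then a second application of continuity together with the density of $L^1(G) \otimes A$ in $L^1(G) \obp A$ yields the commutation with every $v \in L^1(G) \obp A$, giving the desired containment. I expect no serious obstacle beyond keeping the two embeddings and the two density/continuity steps correctly aligned; the norm-one projection hypothesis is precisely what guarantees that the chain of canonical maps used above is well-behaved enough to interpret the inclusion unambiguously.
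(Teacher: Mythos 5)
Your commutation argument (centrality on elementary tensors, then bilinearity and a double density/continuity pass) matches the paper's, and your use of the norm-one projection $P$ to show that $\iota\obp\mathrm{id}_A$ embeds $\Z(L^1(G))\obp A$ isometrically into $L^1(G)\obp A$ is correct --- that is precisely \cite[Proposition 2.4]{ryan}. The gap is in the other leg of your chain: you pass from $\Z(L^1(G))\obp\Z(A)$ to $\Z(L^1(G))\obp A$ via ``the canonical contraction $\mathrm{id}\obp\iota'$'' and assert that this \emph{places} the former \emph{inside} the latter. For the Banach space projective tensor product this is not automatic: if $Y_0\subseteq Y$ is a closed subspace, the induced map $X\obp Y_0\ra X\obp Y$ is a contraction but in general is neither isometric nor even injective, and $\Z(A)$ is not assumed complemented in $A$. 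Since the lemma asserts an inclusion of $\Z(L^1(G))\obp\Z(A)$ into $\Z(L^1(G)\obp A)$ (and the isometric nature of this inclusion is invoked later, in the proof of \Cref{centerdistribution}), leaving this map unjustified is a real hole: as written you have only shown that the image of a possibly non-injective contraction lies in the center.

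The paper avoids this by ordering the chain the other way: $\Z(L^1(G))\obp\Z(A)\subseteq L^1(G)\obp\Z(A)\subseteq L^1(G)\obp A$. The first inclusion is isometric by your projection argument, applied with $\Z(A)$ rather than $A$ as the second factor; the second is isometric because $L^1(G)\obp X\cong L^1(G,X)$ and Bochner spaces respect closed subspaces of the target, so $L^1(G,\Z(A))$ sits isometrically inside $L^1(G,A)$ (see \cite[p.~30]{ryan}). In other words, the subspace inclusion on the $A$-side must be tensored against the $L^1$-factor, where the special structure of $L^1$ makes $\obp$ respect subspaces; tensoring it against $\Z(L^1(G))$, as you do, would require knowing that $\Z(L^1(G))$ is itself an abstract $L^1$-space (true, since it is contractively complemented in $L^1(G)$, but only via a structure theorem you do not invoke). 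Reorder the two legs and your proof is complete.
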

\begin{proof}
	Since $\mathcal{Z}(L^1(G))$ is complemented in $L^1(G)$ by a
	projection of norm one, appealing to
	\cite[Proposition 2.4]{ryan} and \cite[Page
	30]{ryan}, the algebraic embeddings
	$$ \mcal{Z}\big(L^1(G)\big) \ot \mcal{Z}(A) \subseteq L^1(G) \ot
	\mcal{Z}(A) \subseteq L^1(G) \obp A $$ extend to isometric
	embeddings $$ \mcal{Z}\big(L^1(G)\big) \obp \mcal{Z}(A) \subseteq
	L^1(G) \obp \mcal{Z}(A) \subseteq L^1(G) \obp A .$$ Thus, it
	suffices to show that $\mathcal{Z}(L^1(G)) \ot \mcal{Z}(A) \subseteq
	\mathcal{Z}(L^1(G) \obp A)$. This follows easily by observing that
	$(f \ot a)(g \ot b) = fg \ot ab = gf \ot ba = (g \ot b)(f \ot a)$
	for every $f \in \mathcal{Z}(L^1(G))$, $a \in \mcal{Z}(A)$, $g \in
	L^1(G)$ and $b \in A$.
\end{proof}

\begin{remark} For every ${\bf [FC]}^-$ group $G$, $\mathcal{Z}(L^1(G))$ is
complemented in $L^1(G)$ by a projection of norm one (see
\cite{wells}). So, the preceding result holds for a large class of groups.
\end{remark}

We have equality in \Cref{centreofL1GA-FC} for some classes of
groups as we demonstrate below. For instance, when $G$ is abelian,
then $G$ is an $[FC]^-$ group and $\Z(L^1(G)) = L^1(G)$.  This gives
\[
L^1(G)
\obp \Z(A) = \Z(L^1(G)) \obp \Z(A) \subseteq \Z(L^1(G) \obp A)
\subseteq L^1(G) \obp \Z(A),
\]
where the last inclusion follows from
\Cref{centervaluedfunctionsincenter}. Hence, $\Z(L^1(G) \obp A) =
\Z(L^1(G)) \obp \Z(A)$.  And, more generally, we have the following:

\begin{theorem}\label{centerdistribution}
 Let $G$ be a locally compact group and $A$ be a unital Banach
 algebra. If either
 \begin{enumerate}
\item $\mathcal{Z}(L^1(G)) $ is complemented in $L^1(G)$ by a
  projection of norm one with either $G$ discrete or $\Z(A) = \C1_A$,
  or,
\item $G$ is an ${\bf [FIA]}^-$ group, then
 \end{enumerate}
\[
\mathcal{Z}(L^1(G)) \obp \mcal{Z}(A) = \mathcal{Z}(L^1(G) \obp
        A).
\]
\end{theorem}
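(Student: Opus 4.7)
My plan is to handle the forward inclusion $\mcal{Z}(L^1(G)) \obp \mcal{Z}(A) \subseteq \mcal{Z}(L^1(G) \obp A)$ uniformly via \Cref{centreofL1GA-FC}. Hypothesis (1) grants the required norm-one projection onto $\mcal{Z}(L^1(G))$ directly, while in (2) the inclusion $[FIA]^- \subseteq [FC]^-$ from \Cref{types-of-groups}(1), combined with the Remark following \Cref{centreofL1GA-FC}, again supplies it. So only the reverse inclusion requires real work.

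For the reverse inclusion I would split into three sub-cases, corresponding to the hypotheses. When $\mcal{Z}(A) = \C 1_A$ under hypothesis (1), an $f \in \mcal{Z}(L^1(G) \obp A)$ is forced by \Cref{centervaluedfunctionsincenter} to lie in $L^1(G, \C 1_A) \cong L^1(G)$; since $f$ commutes with each $g \otimes 1_A$, we have $f \in \mcal{Z}(L^1(G))$ and hence $f \in \mcal{Z}(L^1(G)) \obp \mcal{Z}(A)$. When $G$ is discrete under hypothesis (1), $\Delta \equiv 1$ and \Cref{centre1} reduces to the pointwise relation $f(yx^{-1})a = a\,f(x^{-1}y)$ for all $x, y \in G$ and $a \in A$; putting $a = 1_A$ forces constancy on conjugacy classes, while \Cref{centervaluedfunctionsincenter} forces $f$ to be $\mcal{Z}(A)$-valued. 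The $\ell^1$ condition kills the contribution of infinite conjugacy classes, so one may write
\[
f = \sum_{C \in \mathcal{C}_f} \chi_C \otimes a_C, \qquad a_C \in \mcal{Z}(A),
\]
where $\mathcal{C}_f$ is the set of finite conjugacy classes, and $\sum_C |C|\,\|a_C\| = \|f\|_1 < \infty$ gives convergence in the projective tensor norm. Each $\chi_C$ for finite $C$ lies in $\mcal{Z}(\ell^1(G)) = \ell^1_{\mathrm{inv}}(G)$, so $f \in \mcal{Z}(\ell^1(G)) \obp \mcal{Z}(A)$.

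Under hypothesis (2), I would deploy the $\sharp$-operator from \Cref{hash}. Given $f \in \mcal{Z}(L^1(G) \obp A)$, \Cref{centervaluedfunctionsincenter} together with \Cref{L1GA} identifies $f$ with an element of $L^1(G) \obp \mcal{Z}(A)$. Since $[FIA]^-$ groups lie in $[IN] \cap [SIN]$ (\Cref{types-of-groups}), \Cref{constantonconjugacy} places $f$ in $L^1_{\mathrm{inv}}(G, A)$, so Step II of \Cref{hash} gives $f^\sharp = f$. The decisive step is to verify that on a simple tensor $g \otimes a$,
\[
(g \otimes a)^\sharp(x) = \int_{\ol{\mathrm{Inn}(G)}} g(\beta^{-1}(x))\, a\, d\beta = \sharp_G(g)(x)\, a,
\]
where $\sharp_G : L^1(G) \to \mcal{Z}(L^1(G))$ is the scalar analogue of \Cref{hash}, obtained by taking $A = \C$. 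Hence $\sharp = \sharp_G \otimes \mathrm{id}$ on the algebraic tensor $L^1(G) \otimes A$; by density and boundedness this extends to $L^1(G) \obp A$, and its restriction to $L^1(G) \obp \mcal{Z}(A)$ takes values in $\mcal{Z}(L^1(G)) \obp \mcal{Z}(A)$. Combining with $f = f^\sharp$ yields $f \in \mcal{Z}(L^1(G)) \obp \mcal{Z}(A)$.

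The main obstacle I anticipate is this last tensor-factorization in case (2): confirming that $\sharp$, although defined on all of $L^1(G, A)$ via a Bochner integral over $\ol{\mathrm{Inn}(G)}$, genuinely agrees with $\sharp_G \otimes \mathrm{id}$ after extension to the completed projective tensor product, so that its image on the subspace $L^1(G) \obp \mcal{Z}(A)$ is truly $\mcal{Z}(L^1(G)) \obp \mcal{Z}(A)$ and not merely the larger $L^1_{\mathrm{inv}}(G, A)$. Carefully combining \Cref{L1GA}, the norm-one projection on $L^1(G)$ inherited from the $[FC]^-$ property, and the observation that $\sharp$ passes through scalar factors will be the crux of the verification.
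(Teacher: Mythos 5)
Your proposal is correct and follows essentially the same route as the paper: the forward inclusion via \Cref{centreofL1GA-FC}, and the reverse inclusion split into the same three sub-cases using \Cref{centervaluedfunctionsincenter}, constancy on conjugacy classes, and the identity $(h\otimes a)^\sharp = h^\sharp\otimes a$ for the $\sharp$-operator of \Cref{hash}. The only (harmless, arguably cleaner) deviation is in the discrete sub-case, where you expand $f$ as an absolutely convergent sum $\sum_C \chi_C\otimes a_C$ over individual finite conjugacy classes, whereas the paper groups the classes by the common value of $f$, proves the resulting index set is countable, and invokes dominated convergence.
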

\begin{proof}
  The direct inclusion follows from \Cref{centreofL1GA-FC}.  We prove
  the reverse inclusion in both the cases. Consider a non-zero element
  $f \in \mcal{Z}\big(L^1(G) \obp A\big)$. Note that, by
  \Cref{centervaluedfunctionsincenter}, $f\in L^1(G,\mcal{Z}(A))$.
	\smallskip

\noindent (1) Let $G$ be such that $\mathcal{Z}(L^1(G)) $ is
complemented in $L^1(G)$ by a projection of norm one.

First suppose that $\Z(A) = \C1_A$. Note that, from
\Cref{constantonconjugacy}, $f$ is constant on the conjugacy classes
of $G$ and takes values in $\Z(A)=\C 1_A$. So, there exists a $g \in
L^1(G)$ such that $f = g 1_A$.  Note that $g$ will also be constant on
the conjugacy classes of $G$; so that $g \in \Z(L^1(G))$ (by
\cite{losert}).  This proves that $f = g \ot 1_A \in \Z(L^1(G)) \obp
\C 1_A$ which gives the result.

Next suppose that $G$ is discrete. Then, $G$ is an ${\bf [SIN]}$ group
and from \Cref{constantonconjugacy}, it follows that $f$ is constant
on the conjugacy classes. Let $\{a_j: j \in \Gamma\}\subseteq
\mcal{Z}(A)$ denote the set of all possible values taken by $f$ on the
conjugacy classes of $G$, for some indexing set $\Gamma$.  For each
$j\in \Gamma$, let $C_{j}$ denote the union of all those conjugacy
classes on which $f$ takes the value $a_j$, and let $C:= G \setminus
\cup\{C_j : a_j \neq 0\}$; so that $f(x) = 0$ for all $x \in C$ if $C
\neq \emptyset$.  We assert that $\Gamma$ is countable.
	
For each $n\in \mathbb{N}$, let $\Gamma_n = \{j \in \Gamma: m(C_j)\,
\|a_j\| >1/n\}$. Since $G$ is discrete, it is easily seen that $\Gamma
\setminus \cup_{n=1}^{\infty} \Gamma_n$ is  at most a singleton; so, it
suffices to show that $\Gamma_n$ is finite for all $n$. Suppose, on
contrary, that for some $n \in \N$, $\Gamma_n$ contains countably infinite
members, say, $\{j_k\}_{k\in \mathbb{N}}$. Then,
\[
\int_G \| f\| \ dx \geq \int\limits_{\cup_{k=1}^{\infty}C_{j_k}} \| f\| \ dx =
\sum_{j=1}^{\infty} \int_{C_{j_k}} \|f\| \ dx = \sum_{k=1}^{\infty}
m(C_{j_k}) \|a_{j_k}\| \geq \sum_{k=1}^{\infty} \frac{1}{n} = \infty,
\]
which is a contradiction to the fact that $f$ is integrable, thereby
establishing our assertion.
	
Two possibilities arise, namely, either $\Gamma$ is finite or
infinite. If $\Gamma$ is finite, then $f= \sum_{j\in \Gamma}
\chi_{C_j}\otimes a_j\in \mathcal{Z}(L^1(G)) \otimes \mathcal{Z}(A)$. And, if $\Gamma$ is infinite, we can consider $\Gamma \setminus\{j: a_j=
0\}$ to be same as $\N$. Note that, $\|f - \sum_{j=1}^n \chi_{C_j}
a_j\|\leq \|f\|$ (as scalar functions) for all $n \geq 1$  and $\|f
- \sum_{j=1}^n \chi_{C_j} a_j\|\ra 0$ pointwise on $G$. Thus,  $ f =
\sum_{j=1}^\infty \chi_{C_j} a_j$ in $\A$, by Lebesgue Dominated
Convergence Theorem. Also, $ \chi_{C_j} a_j$ corresponds to $
\chi_{C_j}  \ot a_j$ which is in $
\mcal{Z}(L^1(G)) \obp \mcal{Z}(A)\subseteq \mathcal{Z}(L^1(G)
\obp A)$ for all $j \in \Gamma$. Hence the result.

\vspace*{2mm}
\noindent (2) Let $G$ be an ${\bf [FIA]}^-$ group. Since  $f \in L^1(G)
\obp \Z(A)$ and  $C_c(G) \ot \Z(A) $ is dense in $L^1(G) \obp \Z(A)$,
there exists a sequence $\{f_n\}$ in $C_c(G) \ot \Z(A)$ such that
$\lim\limits_{n \to \infty} \|f_n - f \|_1 = 0$.  By \Cref{hash}, we
have
\[
\| f_n^{\sharp} - f \|_1 = \| ({f_n -
  f})^\sharp \|_1 \leq \| f_n - f \|_1 \text{ for all } n \in \N;
\]
so that $ \lim\limits_{n \to \infty} \| f_n^{\sharp} - f \|_1 =0
$. From \Cref{centreofL1GA-FC}, we know that the norm of $L^1(G) \obp
\Z(A)$ restricted to $\Z(L^1(G)) \ot Z(A)$ coincides with the norm
coming from $\Z(L^1(G)) \obp \Z(A)$. So, we shall be done if we can
show that $f_n^\sharp \in \Z(L^1(G)) \ot \Z(A)$ for all $n$. It
actually suffices to show that $(h \ot a)^\sharp = h^\sharp \ot a$ for
every $h \in C_c(G)$ and $a \in A$, where the function $\sharp$ on the
right hand side represents the projection from $ L^1(G)$ onto
$\Z(L^1(G))$ as in \Cref{hash} (by taking $A = \C$). This follows
rather easily as
\[
(h \ot a)^\sharp(x) = \int\limits_{\ol{\mathrm{Inn}(G)}} (\beta \cdot
(ha))(x) d\beta = \left(\ \int\limits_{\ol{\mathrm{Inn}(G)}}
h(\beta^{-1}(x)) d\beta\right) a = ( h^\sharp \ot a)(x)
\]
for all $x \in G$.
\end{proof}

Since every compact group is an ${\bf [FIA]}^-$ group (see
\cite[Diagram 1]{palmerpaper}), we deduce the following:

\begin{cor}
  Let $G$ be a compact group and $A$ be a unital Banach algebra. Then,
  \[
  \Z(L^1(G) \obp A) = \Z(L^1(G)) \obp \Z(A).
  \]
  \end{cor}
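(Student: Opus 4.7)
The proposed corollary is essentially an immediate specialization of \Cref{centerdistribution}(2), so the plan is to invoke the two ingredients that make the reduction work: the $[\mathbf{FIA}]^{-}$ property for compact groups, and the theorem itself.

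First I would recall (or briefly justify) that every compact group $G$ is an $[\mathbf{FIA}]^{-}$ group. This is standard and is the content of the citation to \cite{palmerpaper} already used elsewhere in the paper; it can be seen directly by noting that $\mathrm{Inn}(G) \cong G/Z(G)$ is compact as a continuous quotient of the compact group $G$, and that the natural map $G/Z(G) \to \mathrm{Aut}(G)$ is continuous with respect to the Birkhoff topology (since the evaluation map is continuous and Birkhoff convergence on a compact set of automorphisms reduces to uniform convergence on compacta). Hence $\mathrm{Inn}(G)$ is compact, and in particular relatively compact, in $\mathrm{Aut}(G)$.

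Once this is in hand, condition (2) of \Cref{centerdistribution} applies verbatim to $G$ (with $A$ being the given unital Banach algebra), yielding $\mathcal{Z}(L^{1}(G)\obp A) = \mathcal{Z}(L^{1}(G))\obp\mathcal{Z}(A)$, which is exactly what we want. No further work is needed: all of the analytic machinery — the $\sharp$-projection of \Cref{hash}, the identification of the center in \Cref{constantonconjugacy}, and the norm-one complementation used in \Cref{centreofL1GA-FC} — has already been absorbed into the proof of \Cref{centerdistribution}. The only (minor) obstacle is therefore the verification of the $[\mathbf{FIA}]^{-}$ property, which, as noted, is classical and already cited in the paper.
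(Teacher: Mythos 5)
Your proposal is correct and matches the paper's own argument exactly: the paper also deduces the corollary by noting that every compact group is an ${\bf [FIA]}^-$ group (citing \cite[Diagram 1]{palmerpaper}) and then applying \Cref{centerdistribution}(2). Your extra sketch of why $\mathrm{Inn}(G)$ is relatively compact in $\mathrm{Aut}(G)$ is a harmless elaboration of that citation.
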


\bigskip

\subsection*{Acknowledgements}
The authors would like to thank Professor V.~Losert for providing an explanation for one of his results from 
\cite{losert}, which turned out to be instrumental in analyzing
the center of any generalized group algebra.





\end{document}